\def \integers{\mathbb{Z}}
\def \reals{\mathbb{R}}
\theoremstyle{plain} 
\newtheorem{theorem}{Theorem}[section]
\newtheorem{lemma}[theorem]{Lemma}
\newtheorem{proposition}[theorem]{Proposition}
\newtheorem{corollary}[theorem]{Corollary}
\newtheorem{conjecture}[theorem]{Conjecture}
\theoremstyle{definition}
\newtheorem{example}[theorem]{Example}
\newtheorem{definition}[theorem]{Definition}
\theoremstyle{remark}
\newtheorem{remark}[theorem]{Remark}
\newtheorem{remarks}[theorem]{Remarks}
\newlength{\cellsize}
\newcommand\tableau[1]{
\vcenter{
\let\\=\cr
\baselineskip=-16000pt
\lineskiplimit=16000pt
\lineskip=0pt
\halign{&\tableaucell{##}\cr#1\crcr}}}
\newcommand{\tableaucell}[1]{{%
\def \arg{#1}\def \void{}%

\ifx \void \arg
\vbox to \cellsize{\vfil \hrule width \cellsize height 0pt}%
\else
\unitlength=\cellsize
\begin{picture}(1,1)
\put(0,0){\makebox(1,1){$#1$}}
\put(0,0){\line(1,0){1}}
\put(0,1){\line(1,0){1}}
\put(0,0){\line(0,1){1}}
\put(1,0){\line(0,1){1}}
\end{picture}%
\fi}}
\newcommand{\qstep}{
\lhd
}
\def \A{\mathcal{A}}
\def \B{B^{\otimes \lambda} }
\def \R{\mathbb{R}}
\def \Z{\mathbb{Z}}
\def \h{\mathfrak{h}}
\def \g{\mathfrak{g}}
\newcommand{\QB}{\mathrm{QB}}
\def \sgn{\mathrm{sgn}}
\def \g{\mathfrak{g}}
\def \wt{\mathrm{wt}}
\def \height{\mathrm{height} }
\def \htroot{\mathrm{ht}}
\def \fill{\mathrm{fill}}
\def \sfill{\mathrm{sfill}}
\def \Bzero{\mathbf{0}}
\newcommand{\inner}[2]{\langle #1, #2 \rangle}
\renewcommand{\l}[1] {l^{J}_{#1}}
\newcommand{\casetwox}[4]{\left\{ 
\begin{array}{ll}
#1 &\mbox{if $#2$} \\ #3 &\mbox{if $#4$}\,. 
\end{array}
\right.}
\newcommand{\casetwoex}[4]{\left\{ 
\begin{array}{ll}
#1 &\mbox{if $#2$} \\ #3 &\mbox{if $#4$} \,. 
\end{array}
\right.}
\begin{document}
\title{A uniform realization of the combinatorial $R$-matrix}
\author{Cristian Lenart} 
\address{Department of Mathematics and Statistics, State University of New York at Albany,
Albany, NY 12222, USA}
\email{clenart@albany.edu}
\author{Arthur Lubovsky}
\address{Department of Mathematics and Statistics, State University of New York at Albany,
Albany, NY 12222, USA}
\email{alubovsky@albany.edu}
\thanks{C.L. was partially supported by the NSF grants DMS--1101264 and DMS--1362627, and gratefully acknowledges the hospitality and support of the Max-Planck-Institut f\"ur Mathematik in Bonn, where part of this work was carried out. A.L. was supported by a GAANN grant from the US Department of Education.}
\subjclass[2000]{Primary 05E10. Secondary 20G42.}
\keywords{Kirillov-Reshetikhin crystals, energy function, quantum alcove model, quantum Bruhat graph, combinatorial $R$-matrix, quantum Yang-Baxter moves}

\begin{abstract}

    Kirillov-Reshetikhin crystals are colored directed graphs encoding the structure of certain finite-dimensional representations of affine Lie algebras. A tensor products of column
shape Kirillov-Reshetikhin crystals has recently been realized in a uniform way, for all untwisted affine types, in terms of the quantum alcove model.
We enhance this model by using it to give a uniform realization of the {combinatorial $R$-matrix}, i.e., the unique affine crystal isomorphism 
permuting factors in a tensor product of KR crystals. In other words, we are generalizing to all Lie types Sch\"utzenberger's sliding game (jeu de taquin) for Young tableaux, which  realizes the combinatorial $R$-matrix in type $A$. Our construction is in terms of certain combinatorial moves, called quantum Yang-Baxter moves, which are explicitly described by reduction to the rank 2 root systems. We also show that the quantum alcove model does not depend on the choice of a sequence of alcoves joining the fundamental one to a translation of it.

\end{abstract}
\maketitle

\section{Introduction}
Kashiwara's {\em crystals} \cite{kascbq} are colored directed graphs encoding the structure of certain
bases (called crystal bases) of some representations of quantum groups
$U_q({\mathfrak g})$ as $q$ goes to zero (where ${\mathfrak g}$ is a symmetrizable Kac-Moody Lie algebra). 
All highest weight representations have crystal bases/graphs. Beside them, an important class of crystals 
is represented by the {\em Kirillov-Reshetikhin (KR) crystals} \cite{karrym}. They correspond to certain finite-dimensional modules  for affine Lie algebras which are not of highest weight. A KR crystal is denoted $B^{r,s}$, being labeled by an $r\times s$ rectangle, where the height $r$ indexes a simple root of the corresponding finite root system and the width $s$ is any positive integer. The importance of KR crystals stems from the fact that they are building blocks for the corresponding (infinite) highest weight crystals; indeed, the latter are realized as infinite tensor products of the former in the Kyoto path model, see, e.g., \cite{hkqgcb}. Tensor products of KR crystals are endowed with a grading known as the {\em energy function} \cite{NS08,satdck}, which originates in the theory of solvable lattice models \cite{hkorff}. There is a unique affine crystal isomorphism between two tensor products of KR crystals differing by a permutation of the tensor factors; it is called the {\em combinatorial $R$-matrix}.

The first author and Postnikov \cite{lapawg,lapcmc} defined the so-called
{\em alcove model} for highest weight crystals associated to a
 symmetrizable Kac-Moody algebra. A related model is the one of
Gaussent-Littelmann, based on LS-galleries \cite{gallsg}. Both models
are discrete counterparts of the celebrated {\em Littelmann path model} \cite{litlrr,litpro}.  In
\cite{lalgam} the authors generalize the alcove model.  This
generalization, called the {\em quantum alcove model}, has been shown in
\cite{unialcmod2} to uniformly describe tensor products of column
shape KR crystals for all
untwisted affine types.  By contrast, all the existing combinatorial models for KR crystals are type-specific; most of them correspond to the classical types, and are based on diagram fillings, i.e., on tableau models \cite{foskr}. As far as the energy function is concerned, in the quantum alcove model it is computed uniformly and efficiently by a statistic called height \cite{unialcmod2}, whereas an efficient computation based on the tableau models is only available in types $A$ and $C$ \cite{lascec}.

In this paper we enhance the quantum alcove model by using it to give a uniform realization of the {combinatorial $R$-matrix}. The construction is based on certain combinatorial moves called {\em quantum Yang-Baxter moves}, which generalize their alcove model versions defined in \cite{lenccg}. These moves are explicitly described in all Lie types by reduction to the rank 2 root systems. Note that, as far as existing realizations of the combinatorial $R$-matrix are concerned, they are limited in scope and type-specific. For instance, in terms of the tableau models, there is a construction in type $A$ based on Sch\"utzenberger's {\em jeu de taquin} (sliding algorithm) on two columns \cite{fulyt}, whereas the extensions of this procedure to types $B$ and $C$ are involved and not transparent, see \cite{lecsc,lecst}. By contrast, our construction is easy to formulate, and is related to more general concepts. 

We also show that, like the alcove model, its quantum generalization does not depend on the choice of a sequence of roots called a $\lambda$-chain (or, equivalently, on the choice of a sequence of alcoves joining the fundamental one to a translation of it). Note that the similar statement for the Littelmann path model was proved in \cite{litpro} based on subtle continuous arguments, whereas the alcove model and its quantum generalization have the advantage of being discrete, so they are amenable to the use of the combinatorial methods mentioned above.

\section{Background}
\subsection{Root systems}\label{notroot}

Let $\mathfrak{g}$ be a complex simple Lie algebra, and $\mathfrak{h}$ a Cartan subalgebra, whose rank is $r$. 
Let $\Phi \subset \mathfrak{h}^*$ be the corresponding irreducible \emph{root system}, 
$\mathfrak{h}^*_{\reals}\subset \mathfrak{h}$ the real span of the roots, and $\Phi^{+} \subset \Phi$ the set of positive roots.
Let $\Phi^{-} := \Phi \backslash \Phi^{+}$.
For $ \alpha \in \Phi$, we say that $ \alpha > 0 $ if $ \alpha \in \Phi^{+}$,
and $ \alpha < 0 $ if $ \alpha \in \Phi^{-}$.
The sign of the root $\alpha$, denoted $\sgn(\alpha)$, is defined to be $1$ if $\alpha \in \Phi^{+}$, and $-1$ otherwise.
Let $| \alpha | = \sgn( \alpha ) \alpha $.
Let $\rho := \frac{1}{2}(\sum_{\alpha \in \Phi^{+}}\alpha)$. We denote, as usual, the reflection corresponding to the root $\alpha$ by $s_\alpha$. Let 
$\alpha_1, \ldots , \alpha_r \in \Phi^{+}$ be the \emph{simple roots}, and $s_i:=s_{\alpha_i}$ the corresponding simple reflections; the latter generate the {\em Weyl group} $W$. We denote 
$\inner{\cdot}{\cdot}$ the non-degenerate scalar product on 
$\mathfrak{h}^{*}_{\reals}$ induced by the Killing form. Given a root $\alpha$, we consider the corresponding \emph{coroot} $\alpha^{\vee} := 2\alpha/ \inner{\alpha}{\alpha}$.
If $\alpha= \sum_i c_i \alpha_i$, then the \emph{height} of $\alpha$,
denoted by $\htroot(\alpha)$, is given by 
$\htroot(\alpha):=\sum_i c_i$. 
We denote by $\widetilde{\alpha}$ the highest root in $\Phi^{+}$; we let 
$\theta = \alpha_0 := -\widetilde{\alpha} $ and $s_0:=s_{\widetilde{\alpha}}$.

The \emph{weight lattice} $\Lambda$ is given by 
\begin{equation}
	\Lambda := \left\{ \lambda \in \mathfrak{h}_{\reals}^{*} \, : \,
	\inner{\lambda}{\alpha^{\vee}} \in \integers \text{ for any } \alpha \in \Phi \right\}.
	\label{eqn:weight_lattice}
\end{equation}
The weight lattice $\Lambda$ is generated by the \emph{fundamental weights} 
$\omega_1, \ldots \omega_r$, which form the dual basis to the basis of simple coroots, i.e., 
$\inner{\omega_i}{\alpha_j^{\vee}}= \delta_{ij}$. The set $\Lambda^{+}$ of \emph{dominant weights}
is given by 
\begin{equation}
	\Lambda^{+} := \left\{  \lambda \in \Lambda \, : \, 
	\inner{\lambda}{\alpha^{\vee}} \geq 0 \text{ for any } \alpha \in \Phi^{+}
	\right\}.
	\label{eqn:dominant_weights}
\end{equation}

Given $\alpha \in \Phi$ and $k \in \integers$, we denote by $s_{\alpha,k}$ the reflection in the affine hyperplane
\begin{equation}
	H_{\alpha,k}:= \left\{  \lambda \in \mathfrak{h}^{*}_{\reals} \, : \, 
	\inner{\lambda}{\alpha^{\vee}} = k \right\}
	\label{eqn:affine_hyperplane}.
\end{equation}
These reflections generate the \emph{affine Weyl group} $W_{\textrm{aff}}$ for the 
\emph{dual root system} $\Phi^{\vee}:= \left\{ \alpha^{\vee} \, |\, \alpha \in \Phi \right\}$.
The hyperplanes $H_{\alpha,k}$ divide the real vector space $\mathfrak{h}^{*}_\reals$ into open regions, called \emph{alcoves.} The \emph{fundamental alcove} $A_{\circ}$ is given by
\begin{equation}
	A_{\circ} := \left\{ \lambda \in \mathfrak{h}_{\reals}^{*} \, | \, 
	0 < \inner{\lambda}{\alpha^{\vee}} < 1 \text{ for all } \alpha \in \Phi^{+}
	\right\}.
	\label{eqn:fundamental_alcove}
\end{equation}

\subsection{Weyl groups} Let $W$ be the {Weyl group} of the root system $\Phi$ discussed above. The length function on $W$ is denoted by $\ell(\cdot)$. The \emph{Bruhat order} on $W$ is defined by its covers $w \lessdot ws_{\alpha}$, for $\alpha \in \Phi^{+}$,
if $\ell(ws_{\alpha}) = \ell(w) + 1$. 
Define 
\begin{equation}\label{downqbg}w \qstep ws_{\alpha}\,,\;\mbox{ for $\alpha \in \Phi^{+}$}\,,\;\mbox{ if 
$\ell(ws_{\alpha}) =  \ell(w) - 2\htroot( \alpha^\vee )  + 1$}\,.\end{equation}
The \emph{quantum Bruhat graph} \cite{fawqps} is the directed graph on $W$ with edges labeled by positive roots
\begin{equation}
	w \stackrel{\alpha}{\longrightarrow} 
	ws_{\alpha} \quad 
	\text{ for  } 
	w \lessdot ws_{\alpha} \,\mbox{ or }\, w \qstep ws_{\alpha}.
	\label{eqn:qbruhat_edge}
\end{equation}
We denote this graph by $\QB(W)$.

We recall an important topological property of $\QB(W)$, called {\em shellability}, which was proved in \cite{bfpmbo}. This is defined with respect to a {\em reflection ordering} on the positive roots \cite{dyehas}. 

\begin{theorem}{\rm \cite{bfpmbo}}\label{thm:shell} 
Fix a reflection ordering on $\Phi^+$.
\begin{enumerate}
\item[{\rm (1)}] For any pair of elements $v,w\in W$, there is a unique path from $v$ to $w$ in the quantum 
Bruhat graph $\QB(W)$ such that its sequence of edge labels is strictly increasing (resp., decreasing) 
with respect to the reflection ordering.
\item[{\rm (2)}] The path in {\rm (1)} has the smallest possible length and is lexicographically minimal 
(resp., maximal) among all shortest paths from $v$ to $w$.
\end{enumerate}
\end{theorem}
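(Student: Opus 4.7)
The plan is to reduce the statement to a rank-$2$ analysis and then argue inductively on path length. By reversing the reflection ordering, the strictly increasing (lex-minimal) and strictly decreasing (lex-maximal) cases are completely symmetric, so I focus on the strictly increasing version.

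The core technical step is a local \emph{tilting} (or quantum Yang--Baxter) lemma: any length-$2$ subpath $u \stackrel{\alpha}{\to} us_\alpha \stackrel{\beta}{\to} us_\alpha s_\beta$ in $\QB(W)$ with $\alpha > \beta$ in the reflection ordering admits a replacement path in $\QB(W)$, from $u$ to $us_\alpha s_\beta$, whose edge labels are positive roots lying in the rank-$2$ subsystem $\Psi := \Phi \cap \mathrm{span}_{\reals}\{\alpha,\beta\}$, all strictly between $\beta$ and $\alpha$ in the reflection ordering, and whose edge count does not exceed the original two. The proof of this lemma proceeds by reduction to $\Psi$, which has type $A_1 \times A_1$, $A_2$, $B_2$, or $G_2$: the restriction of a reflection ordering to $\Psi^+$ is again a reflection ordering, and in rank $2$ any such ordering is obtained by linearly sweeping $\Psi^+$ according to the argument of each root relative to a generic half-plane. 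For each of the finitely many cases, one enumerates the paths in the quantum Bruhat graph of $\langle s_\alpha, s_\beta\rangle$ between fixed endpoints and verifies the claim by inspection.

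Given the tilting lemma, existence of the required strictly increasing path from $v$ to $w$ would follow by iterated tilting: start with any path, then eliminate descents in the label sequence one at a time; termination is ensured by a lex-decreasing invariant on that sequence. Uniqueness would be proved by induction on path length, comparing the last edges of two candidate increasing paths and invoking the tilting lemma on the merging configuration to force coincidence. The ``smallest length'' assertion in (2) is then immediate, since tilting never increases the edge count, and lex-extremality follows because the tilting move itself is strictly lex-decreasing.

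I expect the main obstacle to be the rank-$2$ verification, most notably in type $G_2$, where the quantum Bruhat graph on the $12$-element Weyl group carries many edges, including quantum jumps whose length decrement $-2\htroot(\alpha^\vee) + 1$ can be substantial. Tracking these to ensure that the replacement paths really do realize the claimed edge-count bound requires patient case-by-case bookkeeping, and one must be careful to distinguish the ``length'' of a path (number of edges) from the Weyl length $\ell(\cdot)$, which need not increase monotonically along quantum edges. Modulo this, the overall argument is a standard shellability induction of Bj\"orner--Wachs type, adapted to the presence of quantum edges.
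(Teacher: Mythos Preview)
The paper does not prove this theorem; it is quoted from \cite{bfpmbo}. The paper does, however, describe the mechanism of that proof in Section~\ref{ybmoves} (referring to \cite[Lemma~6.7]{bfpmbo}): one passes from an arbitrary path to the lexicographically extremal one by ``successively changing length $2$ subpaths, while staying in the same dihedral reflection group.'' Your overall strategy is exactly this one.

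That said, your tilting lemma is false as stated, and the error propagates into your termination argument. You claim the replacement path from $u$ to $us_\alpha s_\beta$ has all labels \emph{strictly between} $\beta$ and $\alpha$. In type $A_1\times A_1$ there are no roots of $\Psi$ strictly between $\beta$ and $\alpha$, yet $us_\alpha s_\beta=us_\beta s_\alpha$ is (generically) distinct from $u$, and the only candidate replacement is $u\stackrel{\beta}{\to}us_\beta\stackrel{\alpha}{\to}us_\alpha s_\beta$, whose labels are the endpoints themselves. In type $A_2$ with $(\alpha,\beta)=(\alpha_2,\alpha_1)$ and $u=1$, the increasing replacement for $1\to s_2\to s_2s_1$ is $1\to s_1\to s_2s_1$ with labels $(\alpha_1,\alpha_1+\alpha_2)$; again $\alpha_1=\beta$ is an endpoint, not strictly interior. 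The correct rank-$2$ lemma is simply the theorem itself for the dihedral subgroup $\overline{W}$ (verified by inspection): between any two elements there is a unique strictly increasing path in $\QB(\overline{W})$, and it is shortest. Since $s_\alpha s_\beta$ is a nontrivial rotation in $\overline{W}$, that path has length exactly $2$, and its first label is strictly smaller than $\alpha$ (otherwise it could not be lex-minimal among the length-$2$ paths, which include $(\alpha,\beta)$). This last inequality is what you actually need: it makes the full label word of the ambient path drop strictly in lexicographic order at each tilt, and together with non-increase of length it forces termination by finiteness. With these corrections the existence and shortest-length claims go through; your uniqueness sketch is along the right lines but too compressed to evaluate as written.
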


\subsection{Kirillov-Reshetikhin (KR) crystals}
\label{subsection:KR-crystals}
Given a symmetrizable Kac-Moody algebra $\g$, a $\g$-{\em crystal} is a non-empty set $B$ together with maps $e_i,f_i:B\to B\cup 
\{ \Bzero \}$ for $i\in I$ (where $I$ indexes the simple roots corresponding to $\g$, as usual, and $\Bzero \not \in B$), and 
$\wt:B \to \Lambda$. We require $b'=f_i(b)$ if and only if $b=e_i(b')$, and $\wt(f_i(b))=\wt(b)-\alpha_i$. The maps $e_i$ and 
$f_i$ are called {\em crystal operators} 
and are represented as arrows $b \to b'=f_i(b)$ colored $i$; thus they endow $B$ with the structure of a colored directed graph.
For $b\in B$, we set $\varepsilon_i(b) := \max\{k \mid e_i^k(b) \neq \Bzero \}$, and 
$\varphi_i(b) := \max\{k \mid f_i^k(b) \neq \Bzero \}$.
Given two $\g$-crystals $B_1$ and $B_2$, we define their tensor product $B_1 \otimes B_2$ 
as follows.
As a set, $B_1\otimes B_2$ is the Cartesian
product of the two sets. For $b=b_1 \otimes b_2\in B_1 \otimes B_2$, the weight function is simply
$\wt(b) := \wt(b_1) + \wt(b_2)$. 
The crystal operators are given by
\begin{equation}\label{tens1}
	f_i (b_1 \otimes b_2):=
	\begin{cases}
	f_i  (b_1) \otimes b_2 & \text{if $\varepsilon_i(b_1) \geq \varphi_i(b_2)$}\\
    b_1 \otimes f_i (b_2)  & \text{ otherwise, }
	\end{cases}
\end{equation}
and similarly for $e_i$. 
The {\em highest weight crystal} $B(\lambda)$ of highest weight $\lambda\in \Lambda^+$ is a 
certain crystal with a unique element 
$u_\lambda$ such that $e_i(u_\lambda)=\Bzero$ for all $i\in I$ 
and $\wt(u_\lambda)=\lambda$.
It encodes the structure of the crystal basis of the $U_q(\mathfrak{g})$-irreducible 
representation with highest weight $\lambda$ as $q$ goes to 0.

A {\em Kirillov-Reshetikhin (KR) crystal} \cite{karrym} is a finite crystal $B^{r,s}$ for an affine algebra, labeled by a rectangle of height $r$ and width $s$, where $r\in I\setminus\{0\}$ and $s$ is any positive integer. We refer, throughout, to the untwisted affine types $A_{n-1}^{(1)}-G_2^{(1)}$, and only consider column shape KR crystals $B^{r,1}$. 

As an example, consider the KR crystal $B^{r,1}$ of type $A_{n-1}^{(1)}$ 
with $r\in \{1,2,\ldots,n-1\}$, for which we have a simple tableau model. 
As a classical type $A_{n-1}$ 
crystal, 
$B^{r,1}$ is isomorphic to the corresponding crystal $B(\omega_r)$.
Recall that an element $b\in B(\omega_r)$ is 
represented by a strictly increasing filling 
of a height $r$ column, with entries in $[n]:=\{1, \dots , n \}$. 
There is a simple construction of the crystal operators on a tensor product of (column shape) KR crystals of type $A_{n-1}^{(1)}$, which is based on \eqref{tens1}.

We refer again to (column shape) KR crystals of arbitrary (untwisted) type. 
Let $\lambda=(\lambda_1  \geq \lambda_2\geq \ldots)$ be a partition, and $\lambda'$ the conjugate partition.
We define $\B:=\bigotimes_{i=1}^{\lambda_1} B^{\lambda_i',1}$. More generally, given a composition ${\mathbf{p}}=(p_1,\ldots,p_k)$, we define
$B^{\otimes {\mathbf{p}}}:=\bigotimes_{i=1}^{k} B^{p_i,1}$. (In both cases, we assume that the corresponding column shape KR crystals exist.) We denote such a tensor product generically by $B$. 

\begin{remarks}\label{rem:conn} {\rm (1)} It is known that $B$ is connected as an affine crystal, but disconnected as a classical crystal (i.e., with the $0$-arrows removed). 

{\rm (2)} Let ${\mathbf{p}'}$ be a composition obtained from ${\mathbf{p}}$ by permuting its parts. There is an affine crystal isomorphism between $B^{\otimes {\mathbf{p}}}$ and $B^{\otimes {\mathbf{p'}}}$, which is unique by the previous remark. This isomorphism is called the {\em combinatorial $R$-matrix}.  
\end{remarks}

We need to distinguish certain arrows in $B$, which are related to affine Demazure crystals, as we shall explain.

\begin{definition}
An arrow $b\rightarrow f_i(b)$ in $B$ is called a \emph{Demazure arrow} if $i \ne 0$, or $i=0$ and 
$\varepsilon_0(b)\ge 1$. 
An arrow $b\rightarrow f_i(b)$ in $B$ is called a \emph{dual Demazure arrow} if $i \ne 0$, or $i=0$ and 
$\varphi_i(b)\geq 2$. 
\end{definition}

\begin{remarks}\label{rem:demazure} {\rm (1)} By Fourier-Littelmann \cite{faltps}, in simply-laced types, the tensor product of KR crystals $B$ is isomorphic, as a classical crystal (discard the affine $0$-arrows) with a certain {\em Demazure crystal} for the corresponding affine algebra. (Demazure modules are submodules of highest weight ones determined by a Borel subalgebra acting on an extremal weight vector.)  Moreover, by \cite{fssdsi}, the $0$-arrows in the latter correspond precisely to the Demazure arrows in $B$. 

{\rm (2)} In the case when all of the tensor factors in $B$ are \emph{perfect}
crystals \citep{hkqgcb}, $B$ remains connected upon removal of the non-Demazure (resp. non-dual Demazure) $0$-arrows. 

{\rm (3)} In classical types, $B^{k,1}$ is perfect as follows: in types $A^{(1)}_{n-1}$ and $D^{(1)}_{n}$ for all $k$, in type $B^{(1)}_{n}$ only for $k\ne n$, and in type $C^{(1)}_n$ only for $k=n$ (using the standard indexing of the Dynkin diagram); in other words, for all the Dynkin nodes in simply-laced types, and only for the nodes corresponding to the long roots in non-simply-laced types. It was conjectured in \cite{hkorff} that the same is true in the exceptional types. In type $G_2^{(1)}$ this was confirmed in \cite{yampfg}, while for types $E_{6,7,8}^{(1)}$ (except for two Dynkin nodes for type $E_8^{(1)}$) and $F_{4}^{(1)}$ it was checked by computer, based on a model closely related to the quantum alcove model, see Section \ref{subsection:qalcove} and \cite{unialcmod2}.
\end{remarks}

The \emph{energy function} $D=D_B$ 
is a function from $B$ to the integers, defined 
by summing the so-called local 
energies of all pairs of tensor factors \cite{hkorff}. We will only refer here to the so-called tail energy \cite{unialcmod2}, so we will not make this specification. (There are two conventions in defining the local energy of a pair of tensor factors: commuting the right one towards the head of the tensor product, or the left one towards the tail; the tail energy corresponds to the second choice.) 
We will only need the following property of the energy function, which exhibits it as an affine grading on $B$.

\begin{theorem}{\rm \cite{NS08,satdck}}
	\label{theorem:energy_recursion}
The energy is preserved by the classical crystal operators $f_i$, i.e., $i\ne 0$.
	If $b\rightarrow f_0(b)$ is a dual Demazure arrow, then $D(f_0(b))=D(b)-1$.
\end{theorem}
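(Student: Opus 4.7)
The plan is to unwind the definition of the tail energy $D$ as a sum of local energies over pairs of tensor factors, and then verify each part of the theorem via the axiomatic characterization of the local energy on a two-fold product. On $B^{r,s} \otimes B^{r',s'}$, the local energy $H$ is characterized (up to an additive constant) by: (a) invariance under the classical operators $f_i$ with $i\neq 0$; and (b) a prescribed behavior under $f_0$, in which the change $H(f_0(b_1 \otimes b_2)) - H(b_1 \otimes b_2)$ depends only on whether $f_0$ acts on the left or the right factor both in $b_1 \otimes b_2$ and in the combinatorial $R$-matrix image $R(b_1 \otimes b_2) = \tilde{b}_2 \otimes \tilde{b}_1$. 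The tail convention fixes the normalization so that $H$ drops by $1$ precisely when $f_0$ acts on the left factor of $b_1 \otimes b_2$ but on the right factor of $\tilde{b}_2 \otimes \tilde{b}_1$, and is locally preserved in the ``same-side'' cases. The tail energy on a $k$-fold tensor product is then $D(b) = \sum_{i<j} H(b_i^{(j)} \otimes b_j) + \sum_i D^{\mathrm{intr}}(b_i^{(k+1)})$, where $b_i^{(j)}$ is obtained from $b_i$ by commuting it past $b_{i+1}, \ldots, b_{j-1}$ using successive $R$-matrices, and $D^{\mathrm{intr}}$ is the intrinsic energy on a single KR factor.

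Classical invariance is immediate: each local $H$ is classical-invariant by construction, and the combinatorial $R$-matrix, being an affine crystal isomorphism, commutes with every $f_i$, in particular with the classical ones; the intrinsic energies are trivially preserved by $f_i$ with $i\neq 0$. Hence every summand in $D$ is preserved and $D(f_i(b)) = D(b)$ for $i \neq 0$.

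For the $f_0$ recursion under the dual Demazure hypothesis, note that by \eqref{tens1} the operator $f_0$ acts on exactly one tensor factor of $b$, say the $m$-th. The change $D(f_0(b)) - D(b)$ splits into contributions from each local $H(b_i^{(j)} \otimes b_j)$ plus one contribution from the intrinsic term at position $m$. Using that the $R$-matrix intertwines $f_0$, one tracks which factor $f_0$ sits in after each commutation: contributions from pairs $(i,j)$ lying entirely before or after position $m$ vanish, and the remaining contributions telescope. The main obstacle is the following two-fold lemma, which must be established separately: if $\varphi_0(b_1 \otimes b_2) \geq 2$, then $f_0$ acts on the right factor both in $b_1 \otimes b_2$ and in $R(b_1 \otimes b_2)$. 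This can be verified by direct application of the tensor product rule \eqref{tens1}, using that $R$ preserves the invariants $\varepsilon_0$ and $\varphi_0$ of the two-fold product. Propagating this lemma through all the $R$-matrices required by the tail convention (which is legitimate because $\varphi_0$ is preserved at each commutation step, so the dual Demazure condition is never destroyed), one finds that every local contribution to $D(f_0(b)) - D(b)$ is zero except for a single $-1$ arising when $f_0$ is finally absorbed into the tail end (either from the last local $H$ or equivalently from the intrinsic term), giving $D(f_0(b)) = D(b) - 1$ as claimed.
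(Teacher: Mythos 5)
The paper does not prove this theorem: it is quoted from the literature (\cite{NS08}, via degrees of level-zero LS paths, and \cite{satdck}, via the affine Demazure grading), so your attempt must stand on its own. Your overall plan --- unwind $D$ into local energies plus intrinsic terms, get classical invariance from the classical invariance of each local $H$ together with the fact that $R$ commutes with all crystal operators, and reduce the $0$-arrow statement to a two-fold lemma --- is the natural ``bare-hands'' route, and the $i\neq 0$ half is fine. But the $f_0$ half has genuine gaps.

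First, your normalization of the local energy is not the correct axiom, and it contradicts your own key lemma. With the tensor convention \eqref{tens1}, the local $H$ changes by $-1$ when $f_0$ acts on the \emph{right} factor in both $b_1\otimes b_2$ and $R(b_1\otimes b_2)$, by $+1$ when it acts on the left in both, and by $0$ in the mixed cases; you can force this already from the $k=2$ case of the theorem. Under the axiom you state ($-1$ exactly in a mixed case, $0$ in the same-side cases), your two-fold lemma --- which correctly identifies the ``right-right'' case when $\varphi_0\geq 2$ --- would produce zero local change everywhere, so your accounting cannot generate the $-1$ at all. Second, that lemma does not follow from \eqref{tens1} plus invariance of $\varepsilon_0,\varphi_0$ under $R$ alone: you need the level-one input that $\varphi_0\leq 1$ on each single column $B^{r,1}$ (then, if $f_0$ acted on the left, $\varphi_0(b_1\otimes b_2)=\varphi_0(b_1)\leq 1<2$, and similarly for the $R$-image); you never supply this. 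Third, and most seriously, the claim that every local contribution vanishes except a single $-1$ is false, and the parenthetical justification conflates invariance of $\varphi_0$ under $R$ (true) with inheritance of the dual Demazure condition by sub-tensor-products (false: $\varphi_0(b_2\otimes\cdots\otimes b_k)$ can equal $1$ while $\varphi_0(b)\geq 2$, so the two-fold lemma cannot be applied pairwise). Concretely, in type $A_2^{(1)}$ take $b=3\otimes 3\otimes 2\in (B^{1,1})^{\otimes 3}$: then $\varphi_0(b)=2$ and $f_0(b)=3\otimes 1\otimes 2$; normalizing $H=0$ on the $B(2\omega_1)$-component and $H=-1$ on the $B(\omega_2)$-component of $B^{1,1}\otimes B^{1,1}$ (and using $R=\mathrm{id}$ there), the three pair contributions to $D(f_0(b))-D(b)$ are $-1$, $+1$, $-1$. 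A $+1$ genuinely occurs and must cancel, so no telescoping of the kind you describe exists; fixing this requires an honest induction on the number of factors, or the structural arguments of the cited papers. Relatedly, check your commutation direction: with \eqref{tens1}, your formula for $D$ (left factors commuted toward the tail) gives total change $+1$ on this very example, and $-2$ on $3\otimes 3\otimes 3\to 3\otimes 3\otimes 1$, so the theorem is false for the function you wrote down; the sum compatible with \eqref{tens1} commutes the factors in the opposite direction.
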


\begin{remark} Theorem \ref{theorem:energy_recursion} shows that the energy is determined up to a constant on the  
connected components of the 
subgraph of the 
affine crystal $B$ containing only the dual Demazure arrows. See also Remark \ref{rem:demazure} (2). 
\end{remark}

\subsection{The quantum alcove model}
\label{subsection:qalcove}

In this section we recall the quantum alcove model, which is a model for KR crystals corresponding to a fixed untwisted affine Lie algebra $\widehat{\g}$. This model is based on the combinatorics of the root system of the corresponding finite-dimensional Lie algebra $\g$, so we use freely the notation in Section \ref{notroot}. 

We say that two alcoves are \emph{adjacent} if they are distinct and have a common wall. Given a pair of adjacent alcoves $A$ and $B$, we write $A \stackrel{\beta}{\longrightarrow} B$ if the
common wall is of the form $H_{\beta,k}$ and the root
$\beta \in \Phi$ points in the direction from $A$ to $B$.

\begin{definition}
	An  \emph{alcove path} is a {sequence of alcoves} $(A_0, A_1, \ldots, A_m)$ such that
	$A_{j-1}$ and $A_j$ are adjacent, for $j=1,\ldots, m.$ We say that an alcove path 
	is \emph{reduced} if it has minimal length among all alcove paths from $A_0$ to $A_m$.
	\end{definition}
	
	Let $A_{\lambda}=A_{\circ}+\lambda$ be the translation of the fundamental alcove $A_{\circ}$ by the weight $\lambda$.
	
	\begin{definition}
		The sequence of roots $(\beta_1, \beta_2, \dots, \beta_m)$ is called a
		\emph{$\lambda$-chain} if 
		\begin{equation}\label{alcovepath}
		A_0=A_{\circ} \stackrel{-\beta_1}{\longrightarrow} A_1
		\stackrel{-\beta_2}{\longrightarrow}\dots 
		\stackrel{-\beta_m}{\longrightarrow} A_m=A_{-\lambda}\end{equation}
is a reduced alcove path.
\end{definition}

We now fix a dominant weight $\lambda$ and an alcove path $\Pi=(A_0, \dots , A_m)$ from 
$A_0 = A_{\circ}$ to $A_m = A_{-\lambda}$. Note that $\Pi$ is determined by the corresponding $\lambda$-chain $\Gamma:=(\beta_1, \dots, \beta_m)$, which consists of positive roots. 
A specific choice of a $\lambda$-chain, called a lex $\lambda$-chain and denoted $\Gamma_{\rm lex}$, is given in \cite{lapcmc}[Proposition~4.2]; this choice depends on a total order on the simple roots. 
We let $r_i:=s_{\beta_i}$, and let $\widehat{r_i}$ be the affine reflection in the hyperplane containing the common face of $A_{i-1}$ and $A_i$, for $i=1, \ldots, m$; in other words, 
$\widehat{r}_i:= s_{\beta_i,-l_i}$, where $l_i:=|\left\{ j<i \, ; \, \beta_j = \beta_i \right\} |$. 
We define $\widetilde{l}_i:= \inner{\lambda}{\beta_i^{\vee}} -l_i = |\left\{ j \geq i \, ; \, \beta_j = \beta_i \right\} |$.

Let $J=\left\{ j_1 < j_2 < \cdots < j_s \right\} \subseteq [m]$ and 
define 
$\Gamma(J):=\left( \gamma_1,\gamma_2, \dots, \gamma_m
\right)$, where 
\begin{equation}\label{defgamma}\gamma_k:=r_{j_1}r_{j_2}\dots r_{j_p}(\beta_k)\,,\end{equation}
 with
$j_p$ the largest folding position less than $k$. Then 
\begin{equation}\label{deflev}
H_{|\gamma_k|,-\l{k}}=\widehat{r}_{j_1}\widehat{r}_{j_2}\dots \widehat{r}_{j_p}(H_{\beta_k,-l_k})\,,
\end{equation}
for some $\l{k}$, which is defined by this relation.
Define $\gamma_{\infty} := r_{j_1}r_{j_2}\dots r_{j_s}(\rho)$.  

     The elements of $J$ are called \emph{folding positions}.  
	 Given $i \in J$, we  say that $i$ is a \emph{positive folding position} if 
	 $\gamma_i>0$, and a \emph{negative folding position} if $\gamma_i<0$. 
	 We denote the positive folding positions by $J^{+}$, and the negative 
	 ones by $J^{-}$.
	 We call 
\begin{equation}\label{def:weight}
\mu=\mu(J):=-\widehat{r}_{j_1}\widehat{r}_{j_2}\ldots \widehat{r}_{j_s}(-\lambda)
\end{equation}
 the \emph{weight} of $J$.
We define 
\begin{equation}
	\label{eqn:height_statistic}
	\height(J):= \sum_{j \in J^{-}} \widetilde{l}_j.
\end{equation}
\begin{definition}
	A subset $J=\left\{ j_1 < j_2 < \cdots < j_s \right\} \subseteq [m]$ (possibly empty)
 is \emph{admissible} if
we have the following path in $\QB(W)$:
\begin{equation}
	\label{eqn:admissible}
	1 \stackrel{\beta_{j_1}}{\longrightarrow} r_{j_1} \stackrel{\beta_{j_2}}{\longrightarrow} r_{j_1}r_{j_2} 
	\stackrel{\beta_{j_3}}{\longrightarrow} \cdots \stackrel{\beta_{j_s}}{\longrightarrow} r_{j_1}r_{j_2}\cdots r_{j_s}\,.
\end{equation}
We call $\Gamma(J)$ an \emph{admissible folding}.
We let $\A( \Gamma )$ be the collection of admissible subsets corresponding to the $\lambda$-chain $\Gamma$.
When $\Gamma $ is clear from the context, we may use the notation 
$\A( \lambda )$ instead.
\end{definition}
%
%

\begin{remark}\label{spec}
If we restrict to admissible subsets for which the path \eqref{eqn:admissible} has no down steps, we recover the classical alcove model in \cite{lapawg,lapcmc}.
\end{remark}

Next we define combinatorial crystal operators $f_p$ and $e_p$ (where $p\in\{0,\ldots,r\}$ indexes the simple roots corresponding to $\widehat{\g}$) on $\A( \Gamma )$. 
Given $J\subseteq [m]$, not necessarily admissible,  and $\alpha\in \Phi$, we will use the following notation:
	\begin{equation}\label{eqn:ILalpha}
	 I_\alpha = I_{\alpha}(J):= \left\{ i \in [m] \, | \, \gamma_i = \pm \alpha \right\}\,, \qquad 
\widehat{I}_\alpha = \widehat{I}_{\alpha}(J):= I_{\alpha} \cup \{\infty\}\,, 
\end{equation}
and $l_{\alpha}^{\infty}:=\inner{\mu(J)}{\sgn(\alpha)\alpha^{\vee}}$.
The following graphical representation of the heights $l_i^J$ for $i\in{I}_\alpha$ and $l_{\alpha}^{\infty}$  
is useful for defining the crystal operators. 
Let 
\[\widehat{I}_{\alpha}= \left\{ i_1 < i_2 < \dots < i_n <i_{n+1}=\infty \right\}\,
	\text{ and  }
	\varepsilon_i := 
	\begin{cases}
		\,\,\,\, 1 &\text{ if } i \not \in J\\
		-1 & \text { if } i \in J
	\end{cases}.\,
	\]
If $\alpha > 0$, we define the continuous piecewise linear function 
$g_{\alpha}:[0,n+\frac{1}{2}] \to \reals$ by
\begin{equation}
	\label{eqn:piecewise-linear_graph}
	g_\alpha(0)= -\frac{1}{2}, \;\;\; g'_{\alpha}(x)=
	\begin{cases}
		\sgn(\gamma_{i_k}) & \text{ if } x \in (k-1,k-\frac{1}{2}),\, k = 1, \ldots, n\\
		\varepsilon_{i_k}\sgn(\gamma_{i_k}) & 
		\text{ if } x \in (k-\frac{1}{2},k),\, k=1,\ldots,n \\
		\sgn(\inner{\gamma_{\infty}}{\alpha^{\vee}}) &
		\text{ if } x \in (n,n+\frac{1}{2}).
	\end{cases}
\end{equation}
If $\alpha<0$, we define $g_{\alpha}$ to be the graph obtained by reflecting $g_{-\alpha}$ in
the $x$-axis.
By \cite{lapcmc}, we have 
\begin{equation}
	\label{eqn:graph_height}
	\sgn(\alpha)\l{i_k}=g_\alpha\left(k-\frac{1}{2}\right), k=1, \dots, n, \, 
	\text{ and }\, 
	\sgn(\alpha)l_{\alpha}^{\infty}:=
	\inner{\mu(J)}{\alpha^{\vee}} = g_{\alpha}\left(n+\frac{1}{2}\right).
\end{equation}
\begin{example}
    Suppose $ \alpha >0 $ and the sequence
    $\{( \gamma_i,\, \varepsilon_i)\}$ for $i \in I_{ \alpha }$ is
    \[ 
( \alpha, -1),\, (- \alpha, 1),\, ( \alpha, 1),\, ( \alpha, 1),\, 
( \alpha, -1),\, (- \alpha, 1),\, ( \alpha, -1),\,( \alpha, 1),\,
    \]
    in this order; also assume that 
    $\sgn(\inner{ \gamma_{ \infty}}{ \alpha^{\vee} })= 1$.
    The graph of $ g_{ \alpha } $ is shown in Figure \ref{fig:galpha}.
\end{example}
\begin{figure}[h]
    \includegraphics[scale=.45]{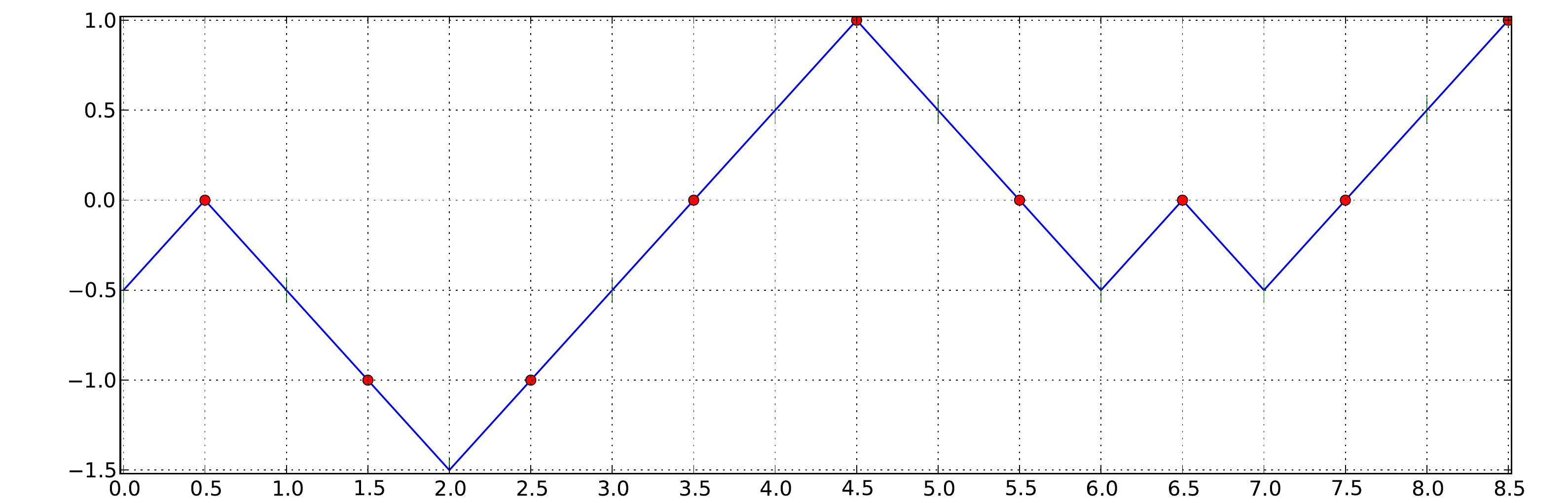}
    \caption{}
    \label{fig:galpha}
\end{figure}

We now impose the additional requirement that $J\subset [m]$ be an admissible subset. 
Let $\delta_{i,j}$ be the Kronecker delta function. 
Fix $p$ in $\{0,\ldots,r\}$, so $\alpha_p$ is a simple root if $p>0$, or $\theta$ if $p=0$.
Let $M$ be the maximum of $g_{{\alpha}_p}$. Let $m$ be the minimum index 
$i$  in $\widehat{I}_{{\alpha}_p}$ for which we have $\sgn({\alpha}_p)\l{i}=M$. 
It was proved in \cite{lalgam} that, if $M\ge\delta_{p,0}$, then either $m\in J$ or $m=\infty$; furthermore, if $M>\delta_{p,0}$, then $m$ has a predecessor $k$ in $\widehat{I}_{{\alpha}_p}$, and we have $k\not\in J$. We define
\begin{equation}
	\label{eqn:rootF} 
f_p(J):= 
	\begin{cases}
		(J \backslash \left\{ m \right\}) \cup \{ k \} & \text{ if $M>\delta_{p,0} $ } \\
				\Bzero & \text{ otherwise }.
	\end{cases}
\end{equation}
Now we define $e_p$. Again let $M:= \max g_{{\alpha}_p}$. Assuming that $M>\inner{\mu(J)}{{\alpha}_p^{\vee}}$, let $k$ be the maximum index 
$i$  in $I_{{\alpha}_p}$ for which we have $\sgn({\alpha}_p)\l{i}=M$,
and let $m$ be the successor of $k$ in $\widehat{I}_{{\alpha}_p}$. Assuming also that $M\ge\delta_{p,0}$, it was proved in \cite{lalgam} that $k\in J$, and either $m\not\in J$ or $m=\infty$. 
Define 
\begin{equation}
	\label{eqn:rootE}
e_p(J):= 
	\begin{cases}
		(J \backslash \left\{ k \right\}) \cup \{ m \} & \text{ if }
		M>\inner{\mu(J)}{{\alpha}_p^{\vee}} \text{ and } M \geq \delta_{p,0}  \\
				\Bzero & \text{ otherwise. }
	\end{cases}
\end{equation}
In the above definitions,  we use the
convention that $J\backslash \left\{ \infty \right\}= J \cup \left\{ \infty \right\} = J$. 

\begin{theorem}{\rm \cite{lalgam}}
    \label{theorem:admissible}
      $\!\!\!\!\!\!\!\!\!\!\!$ \begin{enumerate}
    \item[{\rm (1)}]  If $J$ is an admissible subset and if $f_p(J) \ne \Bzero$, then $f_p(J)$ is also an admissible subset. Similarly for $e_p(J)$. Moreover, $f_p(J)=J'$ if and only if  $e_p(J')=J$.
    \item[{\rm (2)}] We have $ \mu(f_p(J)) = \mu(J) - \alpha_p $. Moreover, if $M\ge\delta_{p,0}$, then
    \[\varphi_p(J)=M-\delta_{p,0}\,,\;\;\;\;\varepsilon_p(J)=M-\langle\mu(J),\alpha_p^\vee\rangle\,,\]
    while otherwise $\varphi_p(J)=\varepsilon_p(J)=0$. 
    \end{enumerate}
\end{theorem}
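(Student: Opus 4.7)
The strategy is to work locally with the piecewise-linear function $g_{\alpha_p}$ attached to $J$, exploiting the fact that $f_p$ and $e_p$ only toggle folding status at two positions indexed by the same root $\alpha_p$, so all data outside $I_{\alpha_p}$ is preserved. The plan is to first handle the weight identity in part (2), then tackle admissibility in part (1), and finally deduce the $\varphi_p, \varepsilon_p$ formulas by counting.

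For the weight identity $\mu(f_p(J))=\mu(J)-\alpha_p$, I would compute directly from \eqref{def:weight}. Writing $f_p(J)=(J\setminus\{m\})\cup\{k\}$ with $k<m$ both in $\widehat{I}_{\alpha_p}$, I first handle the case $m<\infty$: then the change in $\mu$ amounts to conjugating $-\lambda$ by $\widehat{r}_{j_1}\cdots \widehat{r}_{j_{p-1}}$ applied to a product of two affine reflections in parallel hyperplanes $H_{\beta_k,-l_k}$ and $H_{\beta_m,-l_m}$. Using \eqref{deflev}, both of these pull back under the chain of reflections in $J$ through position $k-1$ to hyperplanes $H_{|\gamma|,-\ell}$ with $|\gamma|=\alpha_p$, and the difference between them is exactly $\alpha_p$. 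The boundary case $m=\infty$ is handled similarly, using that $\gamma_\infty$ governs the $\alpha_p$-component of $\mu(J)$.

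The main obstacle is part (1), the preservation of admissibility under $f_p$. Here I would proceed as follows. Let $w_i := r_{j_1}\cdots r_{j_i}$ read along $J$. When we replace $J$ by $(J\setminus\{m\})\cup\{k\}$, the word changes only at the positions in $I_{\alpha_p}$ lying in $[k,m]$, because everywhere else the $\gamma_i$'s and hence the $r_{j_i}$'s are insensitive to toggles on roots $\pm\alpha_p$. Thus admissibility reduces to verifying a local claim inside the interval $[k,m]$: two explicit edges in $\QB(W)$ must exist, corresponding to the new foldings, and the chain in the middle must be translated consistently by right multiplication by $s_{\alpha_p}$. This last translation is handled by the \emph{lift property} of the quantum Bruhat graph from \cite{bfpmbo} (equivalently, by the fact that $u\to u'$ in $\QB(W)$ with label $\beta$ implies $us_{\alpha_p}\to u's_{\alpha_p}$ with label $s_{\alpha_p}(\beta)$ or by an edge via the other direction, depending on signs). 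The existence of the new edges at positions $k$ and $m$ is forced by the choice of $k$ and $m$ as an adjacent pair of extrema of $g_{\alpha_p}$, combined with the graphical description \eqref{eqn:graph_height} of heights: the maximality of $M$ at $m$ together with the slope information from \eqref{eqn:piecewise-linear_graph} guarantees that the relevant step is a cover in Bruhat order (if an up-step of $g_{\alpha_p}$) or a quantum step (if a down-step), using the tilted Bruhat theorem of \cite{bfpmbo} that characterizes these edges via coroot heights. The statement that $f_p$ and $e_p$ are mutual inverses is then built in: $e_p$ looks for the maximum of the same $g_{\alpha_p}$ but starting from the other side, and reverses the swap of $k$ and $m$.

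Finally, the formulas for $\varphi_p(J)$ and $\varepsilon_p(J)$ in part (2) follow by iterating. Each application of $f_p$ lowers $M$ by exactly one unit (the relevant segment of $g_{\alpha_p}$ drops by $2\cdot\tfrac12=1$ when $k,m$ get toggled), and the process terminates when $M$ drops below $\delta_{p,0}$; this gives $\varphi_p(J)=M-\delta_{p,0}$. Dually, $e_p$ raises $M$ by one unit per application and terminates when $M$ no longer exceeds $\langle\mu(J),\alpha_p^\vee\rangle$, yielding $\varepsilon_p(J)=M-\langle\mu(J),\alpha_p^\vee\rangle$; note that under $e_p$, $\mu$ changes by $+\alpha_p$ and $M$ also changes by $+1$, so this difference is preserved along the string, consistent with the standard identity $\varphi_p-\varepsilon_p=\langle\wt,\alpha_p^\vee\rangle$. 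The case $M<\delta_{p,0}$ gives both values zero directly from the definitions \eqref{eqn:rootF} and \eqref{eqn:rootE}.
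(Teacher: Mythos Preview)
This theorem is not proved in the present paper; it is quoted from \cite{lalgam} as background, and the only trace of its proof here is Remark~\ref{changepath}, which records the key structural fact: applying $f_p$ replaces the segment
\[
w_a\rightarrow w_{a+1}\rightarrow\cdots\rightarrow w_{b-1}\rightarrow w_b
\]
of the admissible path by
\[
w_a\rightarrow s_p w_a\rightarrow s_p w_{a+1}\rightarrow\cdots\rightarrow s_p w_{b-1}=w_b\,.
\]
So there is no in-paper proof to compare against, but Remark~\ref{changepath} lets us check whether your outline matches the approach of \cite{lalgam}.

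Your overall plan (work locally via $g_{\alpha_p}$, verify the weight shift by an affine-reflection computation, then check admissibility by modifying the $\QB(W)$-path on the interval $[k,m]$, and finally read off $\varphi_p,\varepsilon_p$ by iteration) is indeed the shape of the argument in \cite{lalgam}. However, there is a concrete error in your admissibility step. You write that ``the chain in the middle must be translated consistently by right multiplication by $s_{\alpha_p}$,'' and invoke a property of the form $u\to u'$ implies $us_{\alpha_p}\to u's_{\alpha_p}$ with label $s_{\alpha_p}(\beta)$. As Remark~\ref{changepath} shows, the translation is by \emph{left} multiplication by $s_p$: the intermediate vertices become $s_p w_i$, and the edge labels $\beta_{j_{i+1}}$ are \emph{unchanged}, since $s_p w_{i+1}=s_p w_i\cdot r_{j_{i+1}}$. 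The statement you need is therefore that $w_i\stackrel{\beta}{\longrightarrow}w_{i+1}$ in $\QB(W)$ implies $s_p w_i\stackrel{\beta}{\longrightarrow}s_p w_{i+1}$, which is not a formal ``lift property'' from \cite{bfpmbo} and is false in general without the hypothesis, coming from the choice of $k$ and $m$ via $g_{\alpha_p}$, that $\gamma_{j_{i+1}}\ne\pm\alpha_p$ for the intermediate steps (equivalently $w_i^{-1}(\alpha_p)\ne\pm\beta_{j_{i+1}}$). That hypothesis is exactly what \cite{lalgam} uses, together with a short case analysis of Bruhat versus quantum edges, to push $s_p$ through. Your sketch of why the new edge at $k$ and the collapsed edge at $m$ exist is also too loose: the actual argument pins down that $\gamma_k=\alpha_p$ with $k\notin J$ and $\gamma_m=\alpha_p$ with $m\in J$ (forced by the shape of $g_{\alpha_p}$ near its first maximum), and then checks directly that $w_a\to s_p w_a$ is an edge and that $s_p w_{b-1}=w_b$. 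Fixing the left/right confusion and making these local checks explicit would bring your outline in line with \cite{lalgam}.
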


\begin{remark}\label{changepath}
Let $J=\{j_1 < \dots < j_s \}$ be an admissible subset, and $w_i := r_{j_1}r_{j_2} \dots r_{j_i}$. Let $M,\,m,\,k$ be 
 as in the above definition of $f_p(J)$, assuming $M>\delta_{p,0}$. Now assume that $m\ne\infty$, and  let $a<b$ be such that 
\[
	 j_a < k < j_{a+1} < \dots < j_b = m < j_{b+1} \;; 
\] 
if $a=0$ or $b+1>s$, then the corresponding indices $j_a$, respectively $j_{b+1}$, are missing.
In the proof of Theorem~\ref{theorem:admissible} in \cite{lalgam}, it was shown that $f_p$ has the effect of changing 
the path in the quantum Bruhat graph 
\[
1=w_0\rightarrow\ldots\rightarrow w_a\rightarrow w_{a+1}\rightarrow\ldots\rightarrow w_{b-1}\rightarrow 
w_b\rightarrow\ldots\rightarrow w_s
\]
corresponding to $J$ into the following path corresponding to $f_p(J)$:
\[
1=w_0\rightarrow \ldots\rightarrow w_a\rightarrow s_p w_a\rightarrow s_p w_{a+1}\rightarrow\ldots\rightarrow s_p w_{b-1}= w_b\rightarrow\ldots\rightarrow w_s\,.
\]
The case $m=\infty$ is similar.
\end{remark}

We summarize the results in \cite{unialcmod2}, cf. also \cite{unialcmod, lnseda}, related to the applications of the quantum alcove model.

\begin{theorem}{\rm \cite{unialcmod2}}
    \label{mainconj} Consider a composition ${\mathbf{p}}=(p_1,\ldots,p_k)$ and the corresponding KR crystal $B:=\bigotimes_{i=1}^{k} B^{p_i,1}$. Let $\lambda:=\omega_{p_1}+\ldots+\omega_{p_k}$, and let $\Gamma_{\rm lex}$ be a corresponding lex $\lambda$-chain (see above). 
    
        {\rm (1)}  The (combinatorial) crystal $\A(\Gamma_{\rm lex})$ is isomorphic to the subgraph of $B$ consisting of the dual Demazure arrows, via a specific bijection  which preserves the weights of the vertices. 

        {\rm (2)}  If the vertex $b$ of $B$ corresponds to $J$ under the isomorphism in part {\rm (1)},  then 
         the energy is given by $D_B(b)-C=-\height(J)$, where $C$ is a global constant.
  \end{theorem}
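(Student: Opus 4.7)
I would split the proof along the two parts. For part (1), the strategy is to first establish that a concatenation $\Gamma^{(1)}\cdots\Gamma^{(k)}$ of lex $\omega_{p_i}$-chains is again a lex $\lambda$-chain, and that any admissible $J\subseteq[m]$ for this concatenation factors as a tuple $(J_1,\ldots,J_k)$ with each $J_i$ playing the role of an admissible subset for $\Gamma^{(i)}$. Via the explicit formulas \eqref{eqn:rootF}--\eqref{eqn:rootE} for the crystal operators $f_p,e_p$, together with the observation that the piecewise-linear graph $g_{\alpha_p}$ decomposes naturally into pieces coming from the blocks $J_i$, a direct comparison with the tensor product rule \eqref{tens1} should yield an identification $\A(\Gamma_{\rm lex})\cong\bigotimes_i \A(\Gamma^{(i)}_{\rm lex})$ on the subgraph of (classical and dual Demazure) arrows.

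This reduces part (1) to a single-column analysis. For each $B^{p,1}$, I would first verify Stembridge's local crystal axioms on $\A(\Gamma_{\rm lex})$ restricted to the classical operators $f_i,e_i$ with $i\ne 0$, so that it realizes the classical highest weight crystal $B(\omega_p)$ underlying $B^{p,1}$; this uses the weight formula of Theorem~\ref{theorem:admissible}(2). Next, for the affine $0$-arrows (where $\alpha_0=-\widetilde\alpha$), the operator $f_0$ of the quantum alcove model given by \eqref{eqn:rootF} with $p=0$ must be matched to the dual Demazure $0$-arrows of $B^{p,1}$: pin down $f_0$ on a carefully chosen reference vertex and propagate by means of the classical isomorphism, appealing in the perfect case to Remark~\ref{rem:demazure}(2) to force the identification via connectedness of the dual Demazure subgraph. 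The multi-column case then follows by combining this with the tensor-product compatibility above.

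For part (2), Theorem~\ref{theorem:energy_recursion} characterizes $D_B$, up to a global constant on each dual-Demazure connected component, by the two conditions $D_B\circ f_i=D_B$ for $i\ne 0$, and $D_B\circ f_0=D_B-1$ on dual Demazure $0$-arrows. It therefore suffices to verify that $-\height$ satisfies the same recursion. Invariance under classical $f_i$ follows by analyzing the transformation $J\mapsto (J\setminus\{m\})\cup\{k\}$ described in Remark~\ref{changepath} and tracking the sign changes $\gamma_j\mapsto s_i(\gamma_j)$ in the affected interval $[k,m]$; since $s_i$ is a simple reflection, it preserves $\Phi^+\setminus\{\alpha_i\}$, and the pairs of positions that flip status contribute cancelling terms $\widetilde{l}_j$ to the height sum. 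For a dual Demazure $0$-arrow the analogous swap takes place inside $I_\theta$, and a direct computation shows that the $\widetilde{l}_j$ at the newly created fold position contributes exactly $+1$, as required.

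The hardest step is the $0$-arrow identification in part (1): one needs a uniform match between the combinatorially-defined $f_0$ of the quantum alcove model and the affine Kashiwara operator on $B^{p,1}$, across all untwisted affine types. The connectedness-based rigidity argument from Remark~\ref{rem:demazure}(2) requires the tensor factors to be perfect, which fails for certain $B^{p,1}$ in non-simply-laced and exceptional types (see Remark~\ref{rem:demazure}(3)); on those components one must invoke the type-by-type (in part computer-aided) verifications and reconcile them with the formula \eqref{eqn:rootF}. A secondary technicality is handling the boundary cases $m=\infty$ in \eqref{eqn:rootF} and \eqref{eqn:rootE}, where the operator acts via $\gamma_\infty$ rather than by flipping a finite fold, both for the crystal isomorphism and for the height recursion in part (2).
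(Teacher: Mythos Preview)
This theorem is not proved in the present paper: it is quoted from \cite{unialcmod2} (with related references \cite{unialcmod,lnseda}) and stated here only as background for the subsequent results on quantum Yang-Baxter moves. There is therefore no proof in this paper to compare your proposal against.

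For context, the argument in \cite{unialcmod2} does not proceed along the lines you sketch. Rather than working directly with $\A(\Gamma_{\rm lex})$, it passes through an intermediate model, the \emph{quantum Lakshmibai--Seshadri path model}, for which the crystal isomorphism with $\bigotimes_i B^{p_i,1}$ and the energy/height identification had already been established (building on \cite{NS08} and related work). The proof then constructs an explicit, type-uniform bijection between admissible subsets in $\A(\Gamma_{\rm lex})$ and quantum LS paths, and checks that this bijection intertwines the crystal operators and sends height to the degree/energy statistic on the path side. Your proposal, by contrast, attempts a direct attack: factor the lex chain, reduce to a single column, verify local crystal axioms, and handle the $0$-arrows by connectedness and rigidity. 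The detour through quantum LS paths is what makes the argument in \cite{unialcmod2} uniform and avoids the case analysis you anticipate in the non-perfect situations; your direct route, while plausible in outline, runs into exactly the difficulty you flag --- matching $f_0$ uniformly without perfectness --- and the Stembridge-axiom step would also need to be replaced in non-simply-laced types, since Stembridge's local characterization applies only to simply-laced crystals.
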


\begin{remark} The isomorphism in Theorem \ref{mainconj} (1) is canonical, so we identify the two crystals.
\end{remark}

\subsection{Specializing the quantum alcove model to type $A$}
\label{subsection:TypeA_alcove}

We start with the basic facts about the root system of type $A_{n-1}$. 
We can identify the space $\h_\R^*$ with the quotient $V:=\R^n/\R(1,\ldots,1)$,
where $\R(1,\ldots,1)$ denotes the subspace in $\R^n$ spanned 
by the vector $(1,\ldots,1)$.  
Let $\varepsilon_1,\ldots,\varepsilon_n\in V$ 
be the images of the coordinate vectors in $\R^n$.
The root system is 
$\Phi=\{\alpha_{ij}:=\varepsilon_i-\varepsilon_j \::\: i\ne j,\ 1\leq i,j\leq n\}$.
The simple roots are $\alpha_i=\alpha_{i,i+1}$, 
for $i=1,\ldots,n-1$. The highest root $\widetilde{\alpha}=\alpha_{1n}$. We let
$\alpha_0=\theta=\alpha_{n1}$.
The weight lattice is $\Lambda=\Z^n/\Z(1,\ldots,1)$. The fundamental weights are $\omega_i = \varepsilon_1+\ldots +\varepsilon_i$, 
for $i=1,\ldots,n-1$. 
A dominant weight $\lambda=\lambda_1\varepsilon_1+\ldots+\lambda_{n-1}\varepsilon_{n-1}$ is identified with the partition $(\lambda_{1}\geq \lambda_{2}\geq \ldots \geq \lambda_{n-1}\geq\lambda_n=0)$ having at most $n-1$ parts. 
Note that $\rho=(n-1)\varepsilon_1+(n-2)\varepsilon_2 + \ldots + 1\varepsilon_{n-1}$.
Considering the Young diagram of the dominant weight $\lambda$ as a concatenation of columns, 
whose heights are $\lambda_1',\lambda_2',\ldots$, 
corresponds to expressing $\lambda$ as $\omega_{\lambda_1'}+\omega_{\lambda_2'}+\ldots$ 
(as usual, $\lambda'$ is the conjugate partition to $\lambda$). 

The Weyl group $W$ is the symmetric group $S_n$, which acts on $V$ by permuting the coordinates $\varepsilon_1,\ldots,\varepsilon_n$. Permutations $w\in S_n$ are written in one-line notation $w=w(1)\ldots w(n)$. For simplicity, we use the same notation $(i,j)$ with $1\le i<j\le n$ for the root $\alpha_{ij}$ and the reflection $s_{\alpha_{ij}}$, which is the transposition $t_{ij}$ of $i$ and $j$.  

We now consider the specialization of the alcove model to type $A$. 
For any $k=1, \ldots , n-1$, we have the following $\omega_k$-chain, from $A_{\circ}$ to 
$A_{-\omega_k}$, denoted by 
	$\Gamma(k)$: 
	\begin{equation}
		\begin{matrix*}[l]
			( (k,k+1),&    (k,k+2)&,     \ldots,&  (k,n),    \\
			\phantom{(} (k-1,k+1),&  (k-1,k+2)&,     \ldots,& (k-1,n),  \\
			\phantom{(,}\vdots & \phantom{,}\vdots & & \phantom{,}\vdots \\
			\phantom{(}(1,k+1),&   (1, k+2)&,      \ldots,&  (1,n)) \,.
		\end{matrix*}
		\label{eqn:lambdachainA_alcove}
	\end{equation}

	Fix a dominant weight $\lambda$, for which we use the partition notation above. We construct a $\lambda$-chain $\Gamma=\left( \beta_1, \beta_2, \dots , \beta_m \right)$ as the concatenation 
	$\Gamma:= \Gamma^{1}\dots\Gamma^{\lambda_1}$, where $\Gamma^{j}=\Gamma(\lambda'_j)$.
	Let  $J=\left\{ j_1 < \dots < j_s \right\}$ be a set of folding positions in 
	 $\Gamma$, not necessarily admissible, and let $T$ be the corresponding list of roots of $\Gamma$, also viewed as transpositions. 
	 The factorization of $\Gamma$ induces a 
	 factorization on $T$ as 
	 $T=T^{1}T^2 \dots T^{\lambda_1}$. 
	 We denote by $T^{1} \dots T^{j}$ the permutation obtained by composing 
	 the transpositions in $T^{1}, \dots , T^{j}$ from left to right. 
For $w\in W$, written $w=w_1w_2\dots w_n$, let $w[i,j]=w_i\dots w_j$. 
	  
	  \begin{definition}
		  \label{definition:fill}
		   Let $\pi_{j}=\pi_{j}(T):=T^1 \dots T^j$. We define the \emph{filling map},
	  which associates with each $J\subseteq[m]$ a filling of the Young diagram $\lambda$, by 
	  \begin{equation}
		  \label{eqn:filling_map}
		  \fill(J)=\fill(T):=C_{1}\dots C_{\lambda_1}\,,
		  \;\mbox{ where } C_{i}:=\pi_i[1,\lambda'_i].
	  \end{equation}
	    We define the \emph{sorted filling map} $\sfill(J)$  by sorting ascendingly
		  the columns of $\fill(J)$.
	  \end{definition}
	  
\begin{theorem}{\rm \cite{Lenart,lalgam}}
	\label{theorem:bijection_type_A}	
	The map $\sfill$ is the unique affine crystal isomorphism between $\A(\Gamma)$ and the subgraph of $\B$ consisting of the dual Demazure arrows. 
	In other words, given $\sfill(J)=b$, there is a dual Demazure arrow $b\rightarrow f_p(b)$ if and only if $f_p(J)\ne \Bzero$, and we have  $f_p(b)=\sfill(f_p(J))$.
	The map $\sfill$ also preserves weights, and translates the height statistic into the Lascoux-Sch\"utzenberger {\em charge} statistic on fillings {\rm \cite{lassuc}}. 
\end{theorem}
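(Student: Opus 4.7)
The plan is to establish the three claims in the theorem in sequence: (i) $\sfill$ is a weight-preserving bijection from $\A(\Gamma)$ onto the set of semistandard Young tableaux of shape $\lambda$ (which parametrize $\B$ as a classical crystal); (ii) $\sfill$ intertwines the crystal operators, i.e., $\sfill(f_p(J))=f_p(\sfill(J))$ for all $p\in\{0,1,\ldots,n-1\}$, with $f_p$ on the right-hand side restricted to dual Demazure arrows when $p=0$; (iii) $\height(J)$ equals the charge of $\sfill(J)$. Uniqueness of the isomorphism will then be automatic, since by Remark~\ref{rem:demazure}(2) and~(3) the tensor products $B^{r,1}$ are perfect in type $A$, so $\B$ remains connected after removing non-dual-Demazure $0$-arrows.

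For part (i), I would first check that each column $C_i=\pi_i[1,\lambda_i']$ of $\fill(J)$ has distinct entries, so that sorting produces a strictly increasing column. This uses admissibility: along the subchain $\Gamma^j=\Gamma(\lambda_j')$ of the form \eqref{eqn:lambdachainA_alcove}, each successive quantum Bruhat edge $w\to w\,(a,b)$ (either a Bruhat cover or a quantum down-step via \eqref{downqbg}) transposes an entry in position $\le \lambda_j'$ with an entry in position $>\lambda_j'$, which by induction keeps the first $\lambda_j'$ entries of $\pi_j$ distinct. The column-strictness of $\sfill(J)$ between adjacent columns $C_{j-1},C_j$ requires the ``column containment'' property, which follows by comparing the actions of $\Gamma^{j-1}$ and $\Gamma^{j}$ on the initial permutation and invoking the explicit form \eqref{eqn:lambdachainA_alcove} of the $\omega_k$-chains. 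Weight preservation $\wt(\sfill(J))=\mu(J)$ then falls out by unwinding the definition \eqref{def:weight} of $\mu(J)$: each affine reflection $\widehat{r}_{j_i}$ contributes a tracked translation, and the $\varepsilon_i$-coordinate of $\mu(J)$ exactly counts the number of occurrences of the entry $i$ in $\fill(J)$.

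For part (ii), the classical case $p\ne 0$ is carried out by directly comparing Remark~\ref{changepath}, which describes how $f_p$ rewrites the path $w_0\to\cdots\to w_s$ into $w_0\to\cdots\to w_a\to s_p w_a\to\cdots\to s_pw_{b-1}=w_b\to\cdots\to w_s$, with the tensor product signature rule \eqref{tens1} applied columnwise to $\sfill(J)$. Columns indexed $\le a$ and $>b$ are unchanged, while columns between index $a{+}1$ and $b$ undergo a coordinated swap of $p$ and $p{+}1$ matching the action of $f_p$ on the corresponding bracketed pair in the tensor product. The affine case $p=0$ is more delicate: I would reduce to the classical case by either exploiting the Dynkin diagram rotation (cyclic symmetry of type $A_{n-1}^{(1)}$) on $\Gamma$, or by directly verifying that the swap of $n$ and $1$ triggered by $f_0$ on the highest-root data in $\Gamma$ matches the affine crystal action on $\B$; the dual Demazure condition $\varphi_0(b)\ge 2$ enters precisely as the inequality $M>\delta_{0,0}=1$ in \eqref{eqn:rootF}, via Theorem~\ref{theorem:admissible}(2).

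For part (iii), I would exploit the fact that both $\height$ and charge are classical crystal invariants (preserved by $f_p$ for $p\ne 0$), together with an explicit verification on a set of distinguished representatives of each classical component. Writing $\sfill(J)$ as a concatenation of columns and invoking the standard cocharge/charge recursion, one matches each negative folding $j\in J^-$ with a contribution $\widetilde{l}_j$ to charge, by tracking how the quantum down-edges of $\QB(S_n)$ correspond to the ``cyclic wraparounds'' in the Lascoux-Sch\"utzenberger definition. The main obstacle is precisely this identification of $\height$ with charge: the charge statistic is defined via an intricate reading and rotation procedure on the tableau, while $\height$ is defined via the folding data of $J$, and bridging the two requires a careful column-by-column analysis, together with the compatibility of $\sfill$ with the classical crystal structure established in part (ii). The $p=0$ intertwining in part (ii) is a comparable source of difficulty, since it must be established directly on the alcove-model side without the benefit of a perfect tableau-theoretic description of $f_0$.
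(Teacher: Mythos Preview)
This theorem is not proved in the present paper; it is quoted from \cite{Lenart,lalgam} as background (see the citation attached to the theorem statement in Section~\ref{subsection:TypeA_alcove}). There is therefore no proof in the paper to compare your proposal against.

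That said, your sketch contains a genuine misconception worth flagging. In part~(i) you assert that the image of $\sfill$ is the set of semistandard Young tableaux of shape~$\lambda$, and you spend effort on a ``column containment property'' between adjacent columns. But $\B=\bigotimes_{i=1}^{\lambda_1} B^{\lambda_i',1}$ is a tensor product of independent column crystals: its elements are arbitrary sequences of strictly increasing columns of the prescribed heights, with no semistandardness constraint between columns. (Semistandard tableaux parametrize the single highest-weight crystal $B(\lambda)$, not the tensor product $\B$.) So the bijectivity argument you outline is aimed at the wrong target set, and the column-containment step is both unnecessary and false in general. The correct claim is simply that each column $C_i$ has distinct entries, which you do argue, and that $\sfill$ surjects onto all such column sequences.

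The remaining parts~(ii) and~(iii) of your outline are plausible high-level strategies and are in the spirit of what is actually carried out in \cite{Lenart,lalgam}, though you correctly identify the $p=0$ intertwining and the height--charge identification as the substantive obstacles; neither is resolved by what you have written.
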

There is a similar result in type $C$ \cite{Lenart}, \cite{lascec}, \cite[Section 4.2]{lalgam}.

\section{The main results}
\label{section:Yang-Baxter}

In this section we realize the combinatorial $R$-matrix in terms of the quantum alcove model, and show that this model is independent of the choice of a $\lambda$-chain. We start with a preview of the main result. 
Let $\mathbf{p}$ be the composition ${\mathbf p}=(p_1,\ldots,
p_k)$, and let ${\mathbf p}'=(p_1',\ldots, p_k')$ be a
permutation of ${\mathbf p}$. 
Let
\begin{align}
&B^{\otimes \mathbf{p}}:=B^{p_1,1}\otimes
\cdots \otimes B^{p_k,1}\,,\quad B^{\otimes
    \mathbf{p'}}:=B^{p_1',1}\otimes \cdots \otimes B^{p_k',1}\;\nonumber \\
&\Gamma := \Gamma(p_1)  \cdots \Gamma(p_k)\,,\quad \Gamma' :=
\Gamma(p'_1) \cdots\Gamma(p'_2) \,,\label{ggp}
\end{align}
where $\Gamma(i)$ is an $\omega_i$-chain; thus, $\Gamma$ and $\Gamma'$ are $\lambda$-chains, where $\lambda:=\omega_{p_1}+\cdots +\omega_{p_k}$.  We will show that $\A( \Gamma)$ and $\A( \Gamma')$ are models for the isomorphic affine crystals $B^{\otimes \mathbf{p}}$ and  
$B^{\otimes \mathbf{p'}}$. Thus, we want to realize the combinatorial $R$-matrix as an affine crystal isomorphism between $\A( \Gamma)$ and $\A( \Gamma')$.

\begin{example}
    \label{example:yb_ex1}
    We illustrate the combinatorial $R$-matrix in type $A_2$. Let
 $ \mathbf{p}= ( 1,2,2,1 )$, $ \mathbf{p}' = (1,2,1,2)  $, so $\lambda=(4,2,0)$. Then
\[B^{\otimes \mathbf{p}} = B^{1,1}\otimes B^{2,1}\otimes {B^{2,1}\otimes
        B^{1,1}}\;\simeq\;B^{1,1}\otimes B^{2,1}\otimes
    {B^{1,1}\otimes B^{2,1}}=B^{\otimes \mathbf{p}'}\,.\]
We first note that in type $ A $ the combinatorial $R$-matrix can be realized by  
Sch\"utzenberger's jeu de taquin (sliding algorithm) on the last two columns, see \cite{fulyt}.
For example:
\[\tableau{{3}}\otimes\tableau{{2}\\{3}}\otimes\tableau{{1}\\{2}}\otimes\tableau{{3}}=\tableau{{3}}\otimes\tableau{{2}\\{3}}\otimes\tableau{{1}&{ \color{black} 3}\\{\color{black} 2}&{}}\;\mapsto\;\tableau{{3}}\otimes\tableau{{2}\\{3}}\otimes\tableau{{\color{black} 1}&{}\\{2}&{3}}\;
\]
\[
\mapsto\;\tableau{{3}}\otimes\tableau{{2}\\{3}}\otimes\tableau{&{1}\\{2}&{3}}=\tableau{{3}}\otimes\tableau{{2}\\{3}}\otimes\tableau{{2}}\otimes\tableau{{1}\\{3}}
\,. 
\]
We now demonstrate how to realize the combinatorial $R$-matrix in the
quantum alcove model.  Let $\Gamma$ and $\Gamma'$ be the $\lambda$-chains corresponding to $ \mathbf{p}$ and $ \mathbf{p}'$:
\begin{gather} \begin{array}{ccccccccccccc}\!\!\Gamma=(\,&\!\!\!\!\!\underline{(1,2)},&\!\!\!\!\underline{(1,3)}&\!\!\!\!\:|\:&\!\!\!\!\underline{(2,3)},&\!\!\!\!(1,3)&\!\!\!\!\:|\:&\!\!\!\!\color{black}{(2,3)},&\!\!\!\!\underline{\color{black}{(1,3)}}&\!\!\!\!\:|\:&\!\!\!\!\underline{\color{black}{(1,2)}},&\!\!\!\!\underline{(1,3)}\,&\!\!\!\!)\,,\\[3mm]
\!\!\Gamma'=(\,&\!\!\!\!\!\underline{(1,2)},&\!\!\!\!\underline{(1,3)}&\!\!\!\!\:|\:&\!\!\!\!\underline{(2,3)},&\!\!\!\!(1,3)&\!\!\!\!\:|\:&\!\!\!\!\underline{\color{black}{(1,2)}},&\!\!\!\!{\color{black}{(1,3)}}&\!\!\!\!\:|\:&\!\!\!\!\underline{\color{black}{(2,3)}},&\!\!\!\!\underline{(1,3)}\,&\!\!\!\!)\,.
\end{array}
\label{eqn:yb_ex1}
\end{gather}
  In \eqref{eqn:yb_ex1} we also showed (via the underlined positions in $\Gamma$ and $\Gamma'$) the choice of two admissible subsets,  $J=\left\{ 1,2,3,6,7,8 \right\}$ in $\A( \Gamma )$ and $J'=\left\{
    1,2,3,5,7,8 \right\}$ in $\A( \Gamma' )$. The
details of our construction will be given in Section \ref{ybmoves} (in particular, Example \ref{example:yb_ex2} is a continuation of the present one); for now, note that $J$ will correspond to $J'$ via our  construction. 
Observe that
\[\sfill(J) = \tableau{{3}}\otimes\tableau{{2}\\{3}}\otimes\tableau{{1}\\{2}}\otimes\tableau{{3}}\;
\mapsto\;\tableau{{3}}\otimes\tableau{{2}\\{3}}\otimes\tableau{{2}}\otimes\tableau{{1}\\{3}}
= \sfill(J')
\,, \]
so we recover the construction above in terms of jeu de taquin.
\end{example}

We construct a bijection between $\A( \Gamma )$ and $\A( \Gamma')$ by
generalizing the construction in \cite{lenccg}, which gives the
bijection in the classical case, where admissible subsets 
correspond to saturated chains in Bruhat order.  The bijection in \cite{lenccg}
is constructed by applying a sequence of operations called {\em Yang-Baxter moves}.

\subsection{Quantum Yang-Baxter moves} 
\label{ybmoves}

This section contains our main constructions. We use freely the notation related to the quantum alcove model in Section \ref{subsection:qalcove}. 

We start by recalling that there are only two reflection orderings on the positive roots corresponding to a dihedral Weyl group of order $2q$, that is, a Weyl group of type
$A_1\times A_1$, $A_2$, $C_2$, or $G_2$ (with $q=2, \,3,\, 4,\, 6$,
respectively). Let $\overline{\Phi}$ be the corresponding root system
with simple roots $\alpha$, $\beta$. The two reflection orderings on $\overline{\Phi}^+$ are given by the sequence
\begin{equation}\label{eqn:reflorder}\beta_1:=\alpha,\;\;\beta_2:=s_\alpha(\beta),\;\;\beta_3:=s_\alpha
s_\beta(\alpha),\;\;\ldots,\;\;\beta_{q-1}:=s_\beta(\alpha),\;\;\beta_q:=\beta\,,\end{equation}
and its reverse.  

Fix a dominant weight $\lambda$. Let us consider an index set 
\begin{equation}
\label{indexset}
I:=\{\overline{1}<\ldots<\overline{t}<1<\ldots<q<\overline{t+1}<\ldots<\overline{n}\}\,.
\end{equation}
 Let $\Gamma=\{\beta_i\}_{i\in I}$ be a $\lambda$-chain, denote $r_i:=s_{\beta_i}$ as before, and let $\Gamma'=\{\beta_i'\}_{i\in I}$ be the sequence of roots defined by 
\begin{equation}
\label{yblambdachain}
\beta_i'=\casetwox{\beta_{q+1-i}}{i\in [q]}{\beta_i}{i\in I\setminus[q]}
\end{equation}
In other words, the sequence $\Gamma'$ is obtained from the
$\lambda$-chain $\Gamma$ by reversing a certain segment. Now assume
that $\{\beta_1,\ldots,\beta_q\}$ are the positive roots (without repetition) of a rank two
root subsystem $\overline{\Phi}$ of $\Phi$. 
The corresponding dihedral reflection group is a subgroup of the Weyl group $W$. 

\begin{proposition}{\rm \cite{lenccg}}
    \label{reversing} 
    ~
    \begin{enumerate}
        \item[\rm{(1)}] The sequence $\Gamma'$ is also a $\lambda$-chain, and the sequence $(\beta_1,\ldots,\beta_q)$ is a {reflection ordering} of $\overline{\Phi}^+$.

        \item[\rm{(2)}] We can obtain any $\lambda$-chain from any other $\lambda$-chain by moves of the form $\Gamma\rightarrow\Gamma'$. 
    \end{enumerate}
\end{proposition}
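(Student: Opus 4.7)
The plan is to work with the geometric interpretation of a $\lambda$-chain as the sequence of hyperplanes crossed by a reduced alcove path from $A_\circ$ to $A_{-\lambda}$, as in the definition preceding \eqref{alcovepath}. For part (1), first I would observe that since $\{\beta_1,\ldots,\beta_q\}$ are all the positive roots of the rank-2 subsystem $\overline{\Phi}$, taken without repetition, each $\beta_i$ with $i\in[q]$ appears for the first time in $\Gamma$, so the associated affine hyperplanes are $H_{\beta_i,0}$. These $q$ hyperplanes all contain the codimension-$2$ affine subspace $\bigcap_{i\in[q]}H_{\beta_i,0}$, and so the sub-path $(A_0,A_1,\ldots,A_q)$ is a reduced gallery confined (in its rank-$2$ projection transverse to this subspace) to the dihedral arrangement determined by $\overline{\Phi}$. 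In such an arrangement of $q$ lines through a common point there are exactly two reduced galleries between the two alcoves $A_0$ and $A_q$, and they cross the lines in reverse orders; these two sequences are precisely the two reflection orderings of $\overline{\Phi}^+$ described in \eqref{eqn:reflorder}.

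Consequently, $(\beta_1,\ldots,\beta_q)$ is one of these two reflection orderings, and $(\beta_q,\beta_{q-1},\ldots,\beta_1)$ is the other. Replacing the initial sub-gallery by the reverse one produces a new reduced alcove path with the same endpoints $A_\circ$ and $A_{-\lambda}$, which is exactly the sequence $\Gamma'$ defined by \eqref{yblambdachain}. This proves both assertions of part (1).

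For part (2), the plan is to invoke the Tits / Matsumoto theorem for the affine Weyl group $W_{\mathrm{aff}}$. A reduced alcove path from $A_\circ$ to $A_{-\lambda}$ is in bijection with a reduced expression for the affine Weyl group element that sends $A_\circ$ to $A_{-\lambda}$, the bijection being given by recording the wall labels crossed. By Tits' theorem, any two reduced expressions of a fixed element are connected by a sequence of braid moves, one for each dihedral parabolic relation. Each braid move on a reduced word corresponds geometrically to a local rearrangement of the gallery through a codimension-$2$ face of a single alcove along the path, which is precisely a rank-$2$ segment reversal of the type analyzed in part (1). Therefore any two $\lambda$-chains are connected by a sequence of moves $\Gamma\to\Gamma'$.

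The main obstacle, and the conceptual crux of the argument, is identifying a general braid move in $W_{\mathrm{aff}}$ with a rank-$2$ segment reversal of the $\lambda$-chain: one must verify that the roots $\beta_i$ crossed within a braid-move segment are indeed the positive roots (without repetition) of a rank-$2$ subsystem of $\Phi$, with the corresponding hyperplanes meeting along a common codimension-$2$ affine subspace. Once this identification is in place, the argument in part (1) supplies the move, and the Matsumoto-type connectivity of reduced expressions supplies the fact that finitely many such moves suffice.
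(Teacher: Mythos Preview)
The paper does not prove this proposition; it is quoted verbatim from \cite{lenccg}. So there is no ``paper's proof'' to compare against, and I will evaluate your sketch on its own terms.

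Your overall strategy is the right one, but there is a genuine error in part~(1). You assert that ``each $\beta_i$ with $i\in[q]$ appears for the first time in $\Gamma$, so the associated affine hyperplanes are $H_{\beta_i,0}$.'' This is false in general. The index set is $I=\{\overline{1}<\cdots<\overline{t}<1<\cdots<q<\overline{t+1}<\cdots<\overline{n}\}$, so the segment $[q]$ sits in the \emph{middle} of the $\lambda$-chain, not at the start; any of the roots $\beta_1,\ldots,\beta_q$ may already have occurred among $\beta_{\overline{1}},\ldots,\beta_{\overline{t}}$, and then $l_i>0$. Correspondingly, the sub-gallery you want to rearrange is $(A_{\overline t},A_1,\ldots,A_q)$, not $(A_0,A_1,\ldots,A_q)$.

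What rescues the argument is not that the hyperplanes pass through the origin, but that they nonetheless share a common codimension-$2$ affine subspace. This is exactly the content of Lemma~\ref{nonempty-int} (proved via Lemma~\ref{height-formula}): one has $\bigcap_{i=1}^q H_{\beta_i,-l_i}=H_{\beta_1,-l_1}\cap H_{\beta_q,-l_q}$, of codimension~$2$. Once you replace your incorrect justification with this fact, your dihedral-arrangement argument goes through unchanged: in the rank-$2$ slice there are exactly two reduced galleries from $A_{\overline t}$ to $A_q$, they cross the $q$ hyperplanes in opposite orders, and those two orders are the two reflection orderings of $\overline{\Phi}^+$.

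For part~(2), your plan via the Tits/Matsumoto theorem for $W_{\mathrm{aff}}$ is the standard one and is correct in outline. You correctly identify the one nontrivial point: a braid move in an affine reduced expression corresponds, on the alcove-path side, to replacing a sub-gallery that winds around a codimension-$2$ face of some alcove, and one must check that the $q$ walls through that face are labeled by the positive roots (without repetition) of a rank-$2$ subsystem of $\Phi$. You flag this but do not carry it out; it does require an argument (e.g.\ pulling the local picture back to the fundamental alcove and analyzing which finite roots arise), so as written your part~(2) is a plan rather than a proof.
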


Let us now map the admissible subsets in $\A(\Gamma)$ to
those in $\A(\Gamma')$. To this end, fix a reflection ordering of $\Phi^+$ compatible with the above ordering $(\beta_1,\ldots,\beta_q)$ of $\overline{\Phi}$; this clearly exists (take any reflection ordering of $\Phi^+$, and reverse it if needed). Now fix an admissible subset $J = \{j_1 < \dots < j_s \}$ in $\A(\Gamma)$. Define $w(J) := r_{j_1}r_{j_2}\cdots r_{j_s}$ and, by abusing notation, let
\begin{equation}\label{setup1}
u:=w(J\cap\{\overline{1},\ldots,\overline{t}\})\,,\;\;\;\;\mbox{and}\;\;\;\;w:=w(J\cap(\{\overline{1},\ldots,\overline{t}\}\cup[q]))\,.\end{equation}

Note that, by the definition of $\A(\Gamma)$, we have a path in $\QB(W)$ from $u$ to $w$ with increasing edge labels $J\cap[q]$ (here we identify an edge label $\beta_i$ with $i$, for $i\in[q]$). By the shellability property of $\QB(W)$, that is, by Theorem \ref{thm:shell} (1), there is another path in $\QB(W)$ from $u$ to $w$ whose edge labels (in $\Phi^+$) increase with respect to the reverse of the reflection ordering considered above. In fact, by the proof of Theorem \ref{thm:shell} in \cite{bfpmbo}, these edge labels are also in $\overline{\Phi}^+$, since the edge labels of the first path had this property. (More precisely, we refer to the proof of \cite[Lemma 6.7]{bfpmbo}, which proves Theorem \ref{thm:shell} (2); here, the passage from an arbitrary path in $\QB(W)$ to a lexicographically minimal/maximal one is realized by successively changing length 2 subpaths, while staying in the same dihedral reflection group.) Thus, by now identifying the label $\beta_i'$ with $i$, we can view the edge labels of the new path as a subset of $[q]$, which we denote by $Y_{u,w}(J\cap[q])$. 
 
 It is clear that we have a bijection $Y\::\:\A(\Gamma)\rightarrow \A(\Gamma')$ given by
\begin{equation}
\label{setup3}
Y(J):=(J \backslash [q])\,\cup\, Y_{u,w}(J\cap[q]) \,.
\end{equation}
We call the moves $J\mapsto Y(J)$ {\em quantum Yang-Baxter moves}. They generalize the Yang-Baxter moves in  
\citep{lenccg}, which correspond to saturated chains in the Bruhat order (i.e., there are no down steps). The explicit description of the quantum Yang-Baxter moves is given in Section \ref{explicitmoves}.

\begin{example}\label{example:yb_ex2}
    We continue Example \ref{example:yb_ex1}. In
    \eqref{eqn:yb_ex1} $ \Gamma  $ is obtained from $ \Gamma'  $ by
    reversing the first three roots in the right half of $ \Gamma' $.
    We will use the conventions of this section and (re)label this
    segment of $ \Gamma' $ by $ (\beta_1, \beta_2, \beta_3) $.
    We have ${u} = s_1s_2s_1=321$, and ${w}= s_1=213$, 
     in one line notation.
    In this case, we have 
    $ Y_{u,w}(\left\{ 1,3 \right\})
    =
    \left\{ 2,3 \right\} 
    $. 
    See Figure \ref{fig:A2qbg}; note that this figure is with respect to the
    reflection ordering $(\beta_1, \beta_2, \beta_3)$. 
    Hence 
    $Y
    (\left\{ 1,2,3,5,7,8 \right\})
    =
    \left\{ 1,2,3,6,7,8 \right\}
    $, where now the indexing corresponds to the entire $\lambda$-chain, cf. Example \ref{example:yb_ex1}.
\end{example}

\subsection{Properties of the quantum Yang-Baxter moves}\label{prop-ybmoves}

In this section we study the main properties of the quantum Yang-Baxter moves, which are concerned with various quantities they preserve, as well as with their interaction with the crystal operators.
We use the same notation as in Section \ref{ybmoves}.

We get started by noting that the quantum Yang-Baxter moves preserve the Weyl group element
$w(\,\cdot\,)$ associated to an admissible subset, that is, 
\begin{equation}
\label{w-pres}
w(Y(J))=w(J)\,.
\end{equation}
Indeed, a quantum Yang-Baxter move simply replaces a subpath of the path \eqref{eqn:admissible} in $\QB(W)$ corresponding to $J$ with another subpath, between the same elements (denoted $u$ and $w$ in \eqref{setup1}). 

In order to prove the next two properties, we need two lemmas. Let $H_{\beta_i,-l_i}$, for $i=1,\ldots,q$, be the hyperplanes corresponding to the roots $\beta_1,\ldots,\beta_q$ in $\Gamma$, cf. the notation in Section \ref{subsection:qalcove}.

\begin{lemma}\label{height-formula}
Let $\beta_i^\vee=a\beta_1^\vee+b\beta_q^\vee$ with $a,b\in\Z_{\ge 0}$. Then $l_i=al_1+bl_q$. 
\end{lemma}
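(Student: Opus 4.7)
The plan is to locate a point $p \in V$ lying on every affine hyperplane $H_{\beta_i, -l_i}$ for $i \in [q]$. Once such $p$ is in hand, $\inner{p}{\beta_i^\vee} = -l_i$ for every $i \in [q]$, and by linearity
\[
-l_i \;=\; \inner{p}{\beta_i^\vee} \;=\; a\inner{p}{\beta_1^\vee} + b\inner{p}{\beta_q^\vee} \;=\; -a\,l_1 - b\,l_q,
\]
which gives $l_i = a\,l_1 + b\,l_q$ at once.

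First I would reduce each $l_i$ to a count that takes place entirely in the prefix $\overline{1}, \ldots, \overline{t}$ of $\Gamma$. Since the roots $\beta_1, \ldots, \beta_q$ are pairwise distinct---they are the positive roots of $\overline{\Phi}$ listed without repetition---for any $i \in [q]$ no position $j \in \{1, \ldots, i-1\}$ contributes to $l_i = |\{j < i : \beta_j = \beta_i\}|$, so all contributions come from $\{\overline{1}, \ldots, \overline{t}\}$. Consequently the sub-alcove-walk through positions $1, \ldots, q$ begins at the common alcove $A^* := A_{\overline{t}}$ and successively crosses the $q$ hyperplanes $H_{\beta_i, -l_i}$ in the reflection ordering of $\overline{\Phi}^+$.

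The main work is to show that these $q$ hyperplanes share a common point. I would project onto the plane $V_2$ spanned by $\overline{\Phi}$ and analyze the induced rank 2 alcove walk $A^* = A_0^* \to A_1^* \to \cdots \to A_q^*$. Each rank 2 alcove is a triangle (or a rectangle for $\overline{\Phi} = A_1 \times A_1$), so the two walls $H_{\beta_i, -l_i}$ and $H_{\beta_{i+1}, -l_{i+1}}$ of $A_i^*$ meet in a unique vertex $p_i$ of $A_i^*$. The crux is the inductive statement $p_i = p_{i+1}$ for every $i$, which reduces to the monotonicity property that the reflection ordering $(\beta_1, \ldots, \beta_q)$ lists the positive roots of $\overline{\Phi}$ in the angular order in which their perpendicular hyperplanes occur around a vertex of the rank 2 arrangement. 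This property is immediate from the explicit form \eqref{eqn:reflorder}. Granting monotonicity, the walk realizes a half-loop around a single vertex $p$, so every $H_{\beta_i, -l_i}$ passes through $p$.

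The main obstacle is verifying monotonicity uniformly across the four rank 2 types $A_1 \times A_1$, $A_2$, $C_2$, $G_2$. One viable route is case-by-case: the explicit ordering in \eqref{eqn:reflorder} is short enough to check in each type, and the only way $p_i \neq p_{i+1}$ could occur is that the walk retraces $H_{\beta_{i+1}, -l_{i+1}}$, which is ruled out by alcove adjacency together with the angular ordering. An alternative uniform route is to compute the affine composition $\widehat{r}_q \cdots \widehat{r}_1$ and exhibit $p$ as its fixed point; in types $A_1 \times A_1$, $C_2$, and $G_2$ the linear part $w_0$ acts as $-1$ on $V_2$, so the translation automatically produces a fixed point, while in $A_2$ a short extra argument is needed to check that this composition is a pure affine reflection rather than a glide.
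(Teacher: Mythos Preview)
Your plan is a genuinely different route from the paper's. The paper argues by induction on the height $a+b$ of $\beta_i^\vee$ in $\overline{\Phi}^\vee$: writing $\beta_i^\vee=s_{\beta_1}(\beta_j^\vee)+c\beta_1^\vee$ for some $\beta_j^\vee$ of smaller height, it invokes a general ``cocycle'' property of reduced alcove paths \cite[Propositions~4.4 and~10.2]{lapcmc} to get $l_i=l_j+cl_1$, and then finishes by the induction hypothesis. Your idea instead is to prove Lemma~\ref{nonempty-int} \emph{first}---exhibit a point $p$ on all $q$ hyperplanes---and then read off Lemma~\ref{height-formula} by linearity. That reversal is perfectly legitimate and, if completed, would make the argument self-contained (no appeal to \cite{lapcmc}).

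However, both of the routes you sketch for producing $p$ have problems as written. The fixed-point route is essentially circular in type $A_2$: there the linear part $r_q\cdots r_1$ is the reflection $s_{\beta_2}$, and a direct computation shows that the translation part of $\widehat r_3\widehat r_2\widehat r_1$ equals $-l_2\alpha_1-(l_1+l_3)\alpha_2$; this is parallel to $\beta_2=\alpha_1+\alpha_2$ (i.e.\ the map is a reflection rather than a glide) \emph{precisely when} $l_2=l_1+l_3$, which is the identity you are trying to prove. So the ``short extra argument'' you allude to cannot be short.

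Your projection-and-monotonicity route is the right idea but the crucial step $p_i=p_{i+1}$ is not justified by the sentence about ``retracing''. What actually drives it is the identity $s_{\beta_{i+1}}(\beta_i)=\pm\beta_{i+2}$ (immediate from the equal angular spacing of rank~2 roots, or directly from \eqref{eqn:reflorder}): reflecting the wall $H_{\beta_i,-l_i}$ of $A_i^*$ across the shared wall $H_{\beta_{i+1},-l_{i+1}}$ yields a wall of $A_{i+1}^*$ perpendicular to $\beta_{i+2}$, and since a rank~2 alcove has at most one wall in each root direction, that wall must be $H_{\beta_{i+2},-l_{i+2}}$; hence $p_i$ lies on it. You should also make explicit why the projected objects $A_i^*$ really are \emph{adjacent} rank~2 alcoves (the projection of the codimension-one face $F_i$ is a genuine segment, not a point, because linear projections are open maps). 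With those two ingredients in place your geometric argument goes through; without them the ``monotonicity'' paragraph is only a heuristic.
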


\begin{proof} We proceed by induction on the height of $\beta_i^\vee$ in $\overline{\Phi}$, i.e., on $a+b$. If $a+b=1$, the statement is obvious, so we assume $a+b>1$; in fact, this means that $a,b\ge 1$. We must have $\inner{\beta_i}{\beta_1^\vee}>0$ or $\inner{\beta_i}{\beta_q^\vee}>0$, because otherwise the contradiction $2=\inner{\beta_i}{\beta_i^\vee}\le 0$ would follow. Without loss of generality, assume that $c:=\inner{\beta_1}{\beta_i^\vee}>0$. Then we have 
\[s_{\beta_1}(\beta_i^\vee)=\beta_i^\vee-c\beta_1^\vee=(a-c)\beta_1^\vee+b\beta_q^\vee\,.\]
Since $b\ge 1$, we must have $a-c\ge 0$. So $s_{\beta_1}(\beta_i^\vee)$ is some positive coroot $\beta_j^\vee$ in $\overline{\Phi}^\vee$ of smaller height than $\beta_i^\vee$. By induction, we have
\begin{equation}\label{fact1}l_j=(a-c)l_1+bl_q\,.\end{equation} On another hand, by using the fact that $\beta_i^\vee=\beta_j^\vee+c\beta_1^\vee$, and by applying \cite{lapcmc}[Propositions 4.4 and 10.2], we deduce 
\begin{equation}\label{fact2}l_i=l_j+cl_1\,.\end{equation}
 The induction step is completed by combining \eqref{fact1} and \eqref{fact2}.
\end{proof}

We easily derive another lemma.

\begin{lemma}\label{nonempty-int}
The intersection of affine hyperplanes $\bigcap_{i=1}^q H_{\beta_i,-l_i}$ has codimension $2$, and coincides with $H_{\beta_1,-l_1}\cap H_{\beta_q,-l_q}$.
\end{lemma}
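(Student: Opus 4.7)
The plan is to reduce the statement to Lemma \ref{height-formula} by exploiting the fact that in a rank two root system every positive coroot is a non-negative integer combination of the two simple coroots. First I would set $\beta_1=\alpha$, $\beta_q=\beta$, so that for each $i\in[q]$ there exist unique $a,b\in\Z_{\ge 0}$ with $\beta_i^\vee=a\beta_1^\vee+b\beta_q^\vee$; Lemma \ref{height-formula} then gives $l_i=al_1+bl_q$.

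Next I would prove the inclusion $H_{\beta_1,-l_1}\cap H_{\beta_q,-l_q}\subseteq \bigcap_{i=1}^q H_{\beta_i,-l_i}$ by a one-line calculation: any $\mu$ in the left-hand side satisfies $\inner{\mu}{\beta_1^\vee}=-l_1$ and $\inner{\mu}{\beta_q^\vee}=-l_q$, so by linearity
\[
\inner{\mu}{\beta_i^\vee}=a\inner{\mu}{\beta_1^\vee}+b\inner{\mu}{\beta_q^\vee}=-al_1-bl_q=-l_i\,,
\]
hence $\mu\in H_{\beta_i,-l_i}$. The reverse inclusion is immediate since $\beta_1$ and $\beta_q$ are among the $\beta_i$.

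Finally, for the codimension claim I would observe that $\beta_1$ and $\beta_q$, being simple roots of the rank two subsystem $\overline{\Phi}$, are linearly independent elements of $\h^*_\reals$. Two affine hyperplanes in $\h^*_\reals$ with linearly independent normal vectors are never parallel, so their intersection is a non-empty affine subspace of codimension exactly $2$. Combined with the equality of sets established above, this yields the lemma. I do not anticipate any real obstacle here; the only subtlety is the positivity of the coefficients $a,b$, which is what licenses invoking Lemma \ref{height-formula} (whose proof used the same non-negativity hypothesis), and this is guaranteed by the standard fact about rank two root systems.
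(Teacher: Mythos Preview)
Your proposal is correct and follows essentially the same approach as the paper: both proofs use Lemma~\ref{height-formula} together with linearity of the pairing to show that any $\mu\in H_{\beta_1,-l_1}\cap H_{\beta_q,-l_q}$ lies on every $H_{\beta_i,-l_i}$. You are simply more explicit than the paper about the reverse inclusion and the codimension-2 claim, but the argument is the same.
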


\begin{proof}
Consider an element $\mu$ in $H_{\beta_1,-l_1}\cap H_{\beta_q,-l_q}$, which has codimension 2. By Lemma \ref{height-formula}, and using the related notation, we have
\[\inner{\mu}{\beta_i^\vee}=a\inner{\mu}{\beta_1^\vee}+b\inner{\mu}{\beta_q^\vee}=-al_1-bl_q=-l_i\,,\]
so $\mu$ also lies on the hyperplane $H_{\beta_i,-l_i}$. 
\end{proof}

We can now prove the following property of the quantum Yang-Baxter moves. Let $\widehat{r}_i$, for $i=1,\ldots,q$, be the affine reflections in the hyperplanes $H_{\beta_i,-l_i}$, cf. the notation in Section \ref{subsection:qalcove}. Let $\widehat{r}_i':=\widehat{r}_{q+1-i}$ and $r_i':=s_{\beta_i'}=s_{\beta_{q+1-i}}$, cf. \eqref{yblambdachain}. 

\begin{proposition}\label{prop:yb-weight}
        The quantum Yang-Baxter moves preserve the associated weight (defined in {\rm \eqref{def:weight}}), i.e. 
    \begin{equation}
        \mu(J) = \mu(Y(J))\,,
        \label{eqn:yb-weight}
    \end{equation}
      where the left hand side is computed with respect to $\Gamma$, and
    the right hand side is computed with respect to $\Gamma'$.
\end{proposition}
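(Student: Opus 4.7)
The plan is to reduce the identity $\mu(J)=\mu(Y(J))$ to an equality of two compositions of affine reflections, viewed as affine isometries of $\h_{\reals}^*$, and then to establish that equality using Lemma~\ref{nonempty-int} together with the Weyl-group identity~\eqref{w-pres}.

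First, decompose $J = A \sqcup M \sqcup B$, where $A = J\cap\{\overline{1},\ldots,\overline{t}\}$, $M = J\cap[q]$, and $B = J\cap\{\overline{t+1},\ldots,\overline{n}\}$. Since the quantum Yang--Baxter move only alters the middle block, $Y(J) = A \sqcup Y_{u,w}(M) \sqcup B$, and for every index $i \in A\cup B$ one has $\widehat{r}'_i = \widehat{r}_i$ (the corresponding root and its multiplicity in the $\lambda$-chain are unchanged). Expanding~\eqref{def:weight} as a composition of affine reflections applied to $-\lambda$, it therefore suffices to show that the two middle compositions
\[
R := \widehat{r}_{m_1}\cdots\widehat{r}_{m_\mu}\quad\text{and}\quad R':=\widehat{r}'_{n_1}\cdots\widehat{r}'_{n_\nu}
\]
agree as affine maps of $\h^*_{\reals}$, where $M=\{m_1<\cdots<m_\mu\}$ and $Y_{u,w}(M)=\{n_1<\cdots<n_\nu\}$.

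An affine isometry is determined by its linear part together with its value at any single point, so I verify both pieces of data. The linear part of $R$ equals $r_{m_1}\cdots r_{m_\mu}=u^{-1}w$, since the admissibility of $J$ gives a path in $\QB(W)$ from $u$ to $w$ with these edge labels; by the very definition of $Y_{u,w}$, the linear part of $R'$ is also $u^{-1}w$ (this is essentially the content of~\eqref{w-pres}). For the second datum, Lemma~\ref{nonempty-int} exhibits the codimension-$2$ affine subspace $L:=H_{\beta_1,-l_1}\cap H_{\beta_q,-l_q}$ as contained in $H_{\beta_i,-l_i}$ for every $i\in[q]$. Because the convention $\widehat{r}'_i=\widehat{r}_{q+1-i}$ forces the affine reflections appearing in both $R$ and $R'$ to lie in $\{\widehat{r}_1,\ldots,\widehat{r}_q\}$, each of them fixes $L$ pointwise; hence so do $R$ and $R'$. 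Having equal linear parts and coinciding on the nonempty set $L$, the two affine maps are equal, and therefore $\mu(J)=\mu(Y(J))$.

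The main obstacle is organizational rather than conceptual: one must set up the three-block decomposition carefully, verify that the reflections outside the reversed segment are literally unchanged (equal roots \emph{and} equal multiplicities in the $\lambda$-chain), and confirm that both partial products in the middle yield the same Weyl group element, which is where the shellability-based construction of $Y_{u,w}$ enters. Once this bookkeeping is in place, Lemma~\ref{nonempty-int} supplies all the geometric content, via the elementary principle that an affine isometry is pinned down by its linear part and its value at a single point.
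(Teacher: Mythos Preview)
Your proof is correct and follows essentially the same approach as the paper's: both reduce to showing that the two middle compositions of affine reflections agree as affine maps, using Lemma~\ref{nonempty-int} to produce a common fixed point and the definition of $Y_{u,w}$ to match the linear parts. The paper carries this out via the explicit splitting $\nu=\mu+(\nu-\mu)$ rather than invoking the abstract principle that an affine isometry is determined by its linear part and one value, but the content is identical (do watch the notational overload of $\mu$ and $\nu$ in your write-up, since $\mu$ already denotes the weight map).
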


\begin{proof} Let $J\cap [q]=\{j_1<\ldots<j_s\}$ and $Y(J)\cap [q]=\{j_1'<\ldots<j_s'\}$. Then, by picking $\mu$ in $\bigcap_{i=1}^q H_{\beta_i,-l_i}$, cf. Lemma \ref{nonempty-int}, we have
\begin{align*}
\widehat{r}_{j_1}\ldots \widehat{r}_{j_s}(\nu)&=\widehat{r}_{j_1}\ldots \widehat{r}_{j_s}(\mu+(\nu-\mu))\\&=\mu+r_{j_1}\ldots r_{j_s}(\nu-\mu)\\
&=\mu+r_{j_1'}\ldots r_{j_s'}(\nu-\mu)\\&=\widehat{r}_{j_1}'\ldots \widehat{r}_{j_s}'(\nu)\,.
\end{align*}
Here the second and last equalities follow from the fact that $\widehat{r}_i(x+y)=\widehat{r}_i(x)+r(y)$ and the choice of $\mu$, while the third one is based on the definition of a quantum Yang-Baxter move. Now \eqref{eqn:yb-weight} follows directly from the definition of $\mu(J)$.
\end{proof}

Another property of the quantum Yang-Baxter moves is the following.

\begin{proposition}\label{pres-energy}
    The quantum Yang-Baxter moves preserve the height statistic (defined in {\rm \eqref{eqn:height_statistic}}), i.e.
    \begin{equation}
        \height(J)=\height(Y(J))\,,
        \label{eqn:yb-height}
    \end{equation}
    where the left hand side is computed with respect to $\Gamma$, and
    the right hand side is computed with respect to $\Gamma'$.
 \end{proposition}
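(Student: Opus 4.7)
The plan is to show that only positions in $[q]$ can contribute differently to $\height(J)$ and $\height(Y(J))$, and then to identify this contribution with a linear functional applied to the weight of a shortest path in $\QB(W)$, which is an invariant of the endpoints $u,w$.

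First, I would verify that each position $j\in I\setminus[q]$ contributes identically to both sides. Since $\widetilde{l}_j$ depends only on the $\lambda$-chain---and $\Gamma$ agrees with $\Gamma'$ outside $[q]$---it suffices to check that the sign of $\gamma_j$ is preserved. For $j<1$ this is clear, as only folds at earlier positions (which are common to $J$ and $Y(J)$) enter into $\gamma_j$. For $j>q$, the relevant product of reflections factors as $w\cdot(\text{folds strictly between $q$ and $j$})$; the Yang--Baxter move preserves $w$ by construction (cf.\ \eqref{w-pres}), so $\gamma_j$ is unchanged and the associated $\widetilde{l}_j$-contribution to $\height$ is the same.

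Second, I would linearize $\widetilde{l}$ on the rank-two sub-system $\overline{\Phi}$. Combining Lemma~\ref{height-formula} with the additivity of $\inner{\lambda}{\cdot}$, whenever $\beta^\vee=a\beta_1^\vee+b\beta_q^\vee$ we have $\widetilde{l}=a\widetilde{l}_1+b\widetilde{l}_q$; hence there is a unique linear function $c$ on the rank-two coroot span satisfying $c(\beta_j^\vee)=\widetilde{l}_j$ for all $j\in[q]$, and the same $c$ applies to $\Gamma'$, since reversing the segment merely swaps $(\beta_1,\beta_q)$ along with their $\widetilde{l}$-values. Writing $J\cap[q]=\{j_1<\dots<j_s\}$ and $v_i:=u\,r_{j_1}\cdots r_{j_{i-1}}$, the step $v_i\to v_ir_{j_i}$ in $\QB(W)$ is a Bruhat cover precisely when $v_i(\beta_{j_i})=\gamma_{j_i}>0$, and a quantum down step otherwise; thus the contribution of $[q]$ to $\height(J)$ equals
\begin{equation*}
\sum_{i\,:\,\text{down step}}\widetilde{l}_{j_i}\;=\;c\!\left(\sum_{i\,:\,\text{down step}}\beta_{j_i}^\vee\right)\;=\;c(\w(\pi_J)),
\end{equation*}
where $\pi_J$ denotes the path from $u$ to $w$ in $\QB(W)$ determined by $J\cap[q]$ and $\w(\pi_J)$ is the sum of coroots labeling its down edges. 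The analogous formula holds for $Y(J)$.

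Finally, I would conclude by invoking the fact that $\w(\pi)$ is the same for all shortest paths $\pi$ from $u$ to $w$ in $\QB(W)$, a classical property of the quantum Bruhat graph (see \cite{bfpmbo,fawqps}). By Theorem~\ref{thm:shell}(2), both $\pi_J$ (whose edge labels are strictly increasing in the chosen reflection ordering, restricted to $\overline{\Phi}^+$) and $\pi_{Y(J)}$ (increasing in the reversed ordering) are shortest paths from $u$ to $w$, so $\w(\pi_J)=\w(\pi_{Y(J)})$ and the $[q]$-contributions to $\height$ agree. Combined with the first step, this yields $\height(J)=\height(Y(J))$. The main obstacle will be to pin down a directly usable form of the invariance of $\w$ on shortest paths; should a citation not be immediately available, one can instead exploit the dihedral reduction implicit in the proof of Theorem~\ref{thm:shell}---the passage between the two extremal paths is realized by local modifications inside rank-two sub-groups, and invariance of $\w$ under such moves reduces to a finite case analysis in the dihedral root systems $A_1\times A_1$, $A_2$, $C_2$, and $G_2$.
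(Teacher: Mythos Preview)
Your proposal is correct and takes essentially the same route as the paper: reduce to the $[q]$-window, linearize the $\widetilde{l}$-values in the coroots via Lemma~\ref{height-formula}, and invoke the invariance of the sum of down-step coroots along shortest paths in $\QB(W)$ (the paper cites this as \cite[Lemma~1(2)]{posqbg}, a more direct reference than your \cite{bfpmbo,fawqps}). The only cosmetic difference is that you package the linearity into a single functional $c$, whereas the paper splits $\widetilde{l}_i=\inner{\lambda}{\beta_i^\vee}-l_i$ and handles the two summands separately before recombining.
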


\begin{proof}
The classical Yang-Baxter moves involve no down steps, so the corresponding height statistic is $0$, and \eqref{eqn:yb-height} is immediate. Now assume that the down steps in $J\cap[q]$ are in positions $k_1<\ldots<k_r$, and those in  $Y(J)\cap[q]$ in positions $k_1'<\ldots<k_t'$ (note that $r$ is not necessarily equal to $t$). By \citep[Lemma 1 (2)]{posqbg}, we have
\begin{equation}\label{eqdown}\beta_{k_1}^\vee+\ldots+\beta_{k_r}^\vee=\beta_{q+1-k_1'}^\vee+\ldots+\beta_{q+1-k_t'}^\vee\,;\end{equation}
here we need the fact that the paths between $u$ and $w$ in $\QB(W)$ corresponding to $J$ and $Y(J)$ are shortest ones, by Theorem \ref{thm:shell} (2). In fact, by examining the explicit form of the quantum Yang-Baxter moves in Section \ref{explicitmoves}, we can see that $r,t\le 2$, although this is not needed here. 

Using the fact that $J\setminus[q]=Y(J)\setminus[q]$, and also the equality between the inner products of the two sides of \eqref{eqdown} with $\lambda$, it suffices to prove that
\begin{equation}\label{eqheight} l_{k_1}+\ldots+l_{k_r}=l_{q+1-k_1'}+\ldots+l_{q+1-k_t'}\,,\end{equation}
cf. the definition of $\height(J)$. By expressing each root $\beta_i^\vee$ in \eqref{eqdown} as $a_i\beta_1^\vee+b_i\beta_q^\vee$ (with $a_i,b_i\in\Z_{\ge 0}$), and by using \eqref{eqdown}, we derive
\[a_{k_1}+\ldots+a_{k_r}=a_{q+1-k_1'}+\ldots+a_{q+1-k_t'}\,,\;\;\;\;\;b_{k_1}+\ldots+b_{k_r}=b_{q+1-k_1'}+\ldots+b_{q+1-k_t'}\,.\]
The formula \eqref{eqheight} now follows by using Lemma \ref{height-formula} to express $l_i=a_il_1+b_il_q$. 
\end{proof}

The following theorem will be proved Section \ref{someproofs}. Note that Proposition 
\ref{prop:yb-weight}  and Theorem \ref{theorem:comm-f-y} generalize the similar results in \citep{lenccg} for 
the classical Yang-Baxter moves.

\begin{theorem}\label{theorem:comm-f-y} 
    The crystal operators commute with 
    the quantum Yang-Baxter moves, that is, $f_p$ is defined on an admissible subset $J$
    if and only if it is defined on $Y(J)$, and we have
\[
    Y(f_p(J))=f_p(Y(J))\,.
\]
\end{theorem}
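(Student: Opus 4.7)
The plan is to reduce the claim to a finite rank~2 verification, using the preservation properties already proved in Section \ref{prop-ybmoves}, namely \eqref{w-pres} and \eqref{eqn:yb-weight}, together with the path-surgery description of $f_p$ in Remark \ref{changepath}.

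First I would show that the piecewise linear graphs $g_{\alpha_p}$ associated to $J$ (relative to $\Gamma$) and to $Y(J)$ (relative to $\Gamma'$) coincide on every position $i\in I_{\alpha_p}$ lying outside the index range $[q]$, and have the same right-most value $\sgn(\alpha_p)\inner{\mu(J)}{\alpha_p^\vee}$. For positions before $[q]$ this is immediate, since $Y$ fixes $J$ there. For positions after $[q]$, equation \eqref{w-pres} ensures that the partial product $r_{j_1}\cdots r_{j_p}$ over all foldings of $J$ contained in $\{\overline{1},\ldots,\overline{t}\}\cup[q]$ equals the analogous product for $Y(J)$, so the roots $\gamma_i$ and heights $\l{i}$ coincide at such positions. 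The right-most value is handled by Proposition \ref{prop:yb-weight}.

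Next, let $u:=w(J\cap\{\overline{1},\ldots,\overline{t}\})$, and observe that $\gamma_i\in u\overline{\Phi}$ for every $i\in[q]$, where $\overline{\Phi}$ is the rank~2 subsystem with positive roots $\beta_1,\ldots,\beta_q$. If $\pm\alpha_p\not\in u\overline{\Phi}$, then $I_{\alpha_p}\cap[q]=\emptyset$ for both $J$ and $Y(J)$; hence the two graphs $g_{\alpha_p}$ are literally identical, and the positions $m,k$ produced by \eqref{eqn:rootF} are the same in both settings and lie outside $[q]$. Here $f_p$ and $Y$ act on disjoint parts of $J$, and commutation (as well as the equivalence $f_p(J)\ne\Bzero\iff f_p(Y(J))\ne\Bzero$) is immediate. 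The substantive case is $\pm\alpha_p\in u\overline{\Phi}$. Using Remark \ref{changepath}, $f_p$ left-multiplies a consecutive block $w_a\to\cdots\to w_b$ of the $\QB(W)$-path associated to $J$ by $s_p$, while $Y$ leaves the portions of the path before $u$ and after $w$ untouched and swaps the $u$-to-$w$ subpath for another shortest subpath, which by Theorem \ref{thm:shell} and the argument of \cite{bfpmbo} stays inside the dihedral subgroup generated by $\overline{\Phi}$. If the block $[a+1,b]$ touched by $f_p$ is disjoint from $[q]$, the two operations act on disjoint parts and commute. Otherwise, the commutation reduces to an identity among shortest paths and $s_p$-translates in the quantum Bruhat graph of $u\overline{\Phi}$.

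The hard part will be this rank~2 identity. I expect to verify it by going through the explicit classification of quantum Yang-Baxter moves in Section \ref{explicitmoves}, for each of the four rank~2 types $A_1\times A_1,\,A_2,\,C_2,\,G_2$, each admissible subpath from $u$ to $w$ in the corresponding dihedral quantum Bruhat graph, and each relative choice of the simple root $\alpha_p$ (including the affine case $p=0$). The enumeration is finite but not short. The main tool making each sub-case tractable is the uniqueness of the strictly increasing (respectively, decreasing) shortest path in $\QB(\overline{\Phi})$ from Theorem \ref{thm:shell}(1)--(2), which pins down $Y_{u,w}$ on both $J$ and $f_p(J)$ and forces the equality $Y(f_p(J))=f_p(Y(J))$ in each configuration, while simultaneously yielding the defined/undefined equivalence. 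The $e_p$-case then follows from $f_p(J)=J'\iff e_p(J')=J$ (Theorem \ref{theorem:admissible}(1)).
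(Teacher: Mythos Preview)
Your setup is right: outside the $[q]$-window the sequences $\Gamma(J)$ and $\Gamma'(Y(J))$ coincide, and Proposition~\ref{prop:yb-weight} together with \eqref{w-pres} matches the end-values of $g_{\alpha_p}$. Your split into ``$\pm\alpha_p\notin u\overline{\Phi}$'' versus ``$\pm\alpha_p\in u\overline{\Phi}$'' also matches the paper's case~(f) in Lemma~\ref{critcase} versus the rest. Where you diverge is in the interacting case: you propose an explicit rank-two enumeration through the tables of Section~\ref{explicitmoves}, whereas the paper avoids this entirely.

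The paper's device is the following. After encoding $g_{\alpha_p}$ by a sign word $S_p$ and isolating the subword $\overline{S}_p$ supported on $[q]$, Lemma~\ref{critcase} shows that the \emph{type} of $\overline{S}_p$ (one of six short patterns (a)--(f)) is determined solely by the signs of $u^{-1}(\alpha_p)$ and $w^{-1}(\alpha_p)$, hence is the same for $J$ and $Y(J)$. Lemma~\ref{actfp} then pins down, case by case, exactly how $f_p$ touches the $[q]$-window: it either avoids it, removes one folding there (type~(a)), or adds one (type~(b)). In each case one reads off via Remark~\ref{changepath} that the pair $(\widetilde u,\widetilde w)$ attached to $f_p(J)$ equals the pair attached to $f_p(Y(J))$, and that $f_p(J)\setminus[q]=f_p(Y(J))\setminus[q]$. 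Since $Y$ is by construction the function ``keep $J\setminus[q]$; replace $J\cap[q]$ by the unique decreasing shortest path from $u$ to $w$'', equality of these three data forces $Y(f_p(J))=f_p(Y(J))$ by the uniqueness in Theorem~\ref{thm:shell}. No rank-two enumeration is needed.

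Your enumeration would certainly succeed, and it has the virtue of not requiring the gallery/sign-word machinery of Section~\ref{pf1}. But two points deserve care. First, in your ``disjoint block'' subcase when $k\le\overline t$ and $m\ge\overline{t+1}$ (so the $f_p$-block straddles $[q]$), the operations are not literally on disjoint index sets: the pair $(u,w)$ becomes $(s_pu,s_pw)$ and you must argue that the unique decreasing path between them has the same label set as before; this is exactly where the paper invokes Remark~\ref{changepath} and uniqueness, and you should too. Second, the ``rank-two identity'' you describe is not an identity inside $\QB(\overline W)$ alone, because $f_p$ is a crystal operator for $\Phi$ and $\alpha_p$ need not be simple in $\overline\Phi$; what really gets checked is that the label sets match, and Theorem~\ref{qbg2} (edges on a coset restrict to edges of $\QB(\overline W)$) is the bridge that makes the rank-two tables applicable. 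If you make those two points explicit, your proof goes through; the paper's route simply replaces the enumeration by the structural observation that $Y$ depends only on $(u,w)$ and $J\setminus[q]$.
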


\subsection{Corollaries and conjectures}

In this section we state some corollaries of the results in the previous section which are, in fact, the main results of the paper. We also discuss possible strengthenings of these results.

Theorem \ref{theorem:comm-f-y}, Proposition \ref{prop:yb-weight}, and Proposition \ref{pres-energy} immediately imply the following corollary (cf. also Proposition \ref{reversing}), which essentially says that the quantum alcove model is independent of the choice of a $\lambda$-chain.

\begin{corollary}\label{cor0}
Given two $\lambda$-chains $\Gamma$ and $\Gamma'$, there is an affine crystal isomorphism between $\A(\Gamma)$ and $\A(\Gamma')$ which preserves the weights and heights of the vertices. This isomorphism is realized as a composition of quantum Yang-Baxter moves.
\end{corollary}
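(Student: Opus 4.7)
The plan is to deduce Corollary \ref{cor0} as a direct consequence of Proposition \ref{reversing}(2), together with Theorem \ref{theorem:comm-f-y}, Proposition \ref{prop:yb-weight}, and Proposition \ref{pres-energy}. The strategy is first to reduce the general statement to the case of a single segment-reversal of the form \eqref{yblambdachain}, and then to assemble the properties of an individual quantum Yang-Baxter move into the assertion that it is an affine crystal isomorphism preserving weights and heights.

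First, I would invoke Proposition \ref{reversing}(2) to write $\Gamma'=\Gamma_N,\Gamma_{N-1},\ldots,\Gamma_0=\Gamma$ for some sequence of $\lambda$-chains $\Gamma_0,\Gamma_1,\ldots,\Gamma_N$ in which consecutive ones differ by a single move of the type treated in Section \ref{ybmoves}, i.e., by reversing a contiguous segment consisting of the positive roots of a rank-two subsystem of $\Phi$. For each such transition $\Gamma_{i-1}\rightsquigarrow\Gamma_i$, the construction of Section \ref{ybmoves} produces the associated quantum Yang-Baxter move $Y_i:\A(\Gamma_{i-1})\to\A(\Gamma_i)$, which by its definition is a bijection (the shellability statement of Theorem \ref{thm:shell}(1) guarantees existence and uniqueness of the replacement subpath in $\QB(W)$, and the construction is clearly reversible). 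The desired map is then the composition
\[
Y := Y_N\circ Y_{N-1}\circ\cdots\circ Y_1\,:\,\A(\Gamma)\longrightarrow \A(\Gamma')\,.
\]

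Next, for a single move $Y_i$, I would collect the three key properties. Proposition \ref{prop:yb-weight} gives $\wt(Y_i(J))=\mu(Y_i(J))=\mu(J)=\wt(J)$; Proposition \ref{pres-energy} gives $\height(Y_i(J))=\height(J)$; and Theorem \ref{theorem:comm-f-y} gives $Y_i\circ f_p = f_p\circ Y_i$ for every $p\in\{0,1,\ldots,r\}$, with $f_p$ defined on one side exactly when it is defined on the other. Since the crystal operators satisfy $f_p(J)=J'\iff e_p(J')=J$ on admissible subsets (Theorem \ref{theorem:admissible}(1)), the same commutation holds for $e_p$. Combined with weight preservation and Theorem \ref{theorem:admissible}(2), this forces $\varepsilon_p(Y_i(J))=\varepsilon_p(J)$ and $\varphi_p(Y_i(J))=\varphi_p(J)$, so $Y_i$ is a morphism of affine crystals; being bijective, it is an isomorphism.

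Finally, since each $Y_i$ is an affine crystal isomorphism that preserves weight and height, so is the composition $Y=Y_N\circ\cdots\circ Y_1$, yielding the statement of the corollary. I do not expect any real obstacle: the content of the corollary is precisely the assembly of the three propositions already established, and the only thing to watch is that Proposition \ref{reversing}(2) is valid for arbitrary pairs of $\lambda$-chains (so that the inductive reduction actually terminates), which is part of its statement. One could note in passing that the isomorphism produced this way may depend on the chosen sequence of intermediate moves, but uniqueness is not asserted in the corollary; if desired, uniqueness of an affine crystal isomorphism preserving weights could be argued separately by combining Remarks \ref{rem:conn} with Theorem \ref{mainconj}(1) to identify both sides with the same tensor product of KR crystals.
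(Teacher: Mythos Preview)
Your proposal is correct and follows exactly the paper's approach: the corollary is stated as an immediate consequence of Proposition~\ref{reversing}(2), Theorem~\ref{theorem:comm-f-y}, Proposition~\ref{prop:yb-weight}, and Proposition~\ref{pres-energy}, and you have simply spelled out the assembly in detail. One small caveat on your closing aside: uniqueness cannot in general be argued as you suggest, since $\A(\Gamma)$ models only the dual Demazure arrows and need not be connected (cf.\ Remark~\ref{affcris0} and Conjecture~\ref{conj1}); but as you correctly note, uniqueness is not part of the corollary.
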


By composing the explicit bijection between a tensor product of (column shape) KR crystals $B^{\otimes \mathbf{p}}$ and $\A(\Gamma_{\rm lex})$ in Theorem \ref{mainconj} with an affine crystal isomorphism between $\A(\Gamma_{\rm lex})$ and $\A(\Gamma)$ realized by quantum Yang-Baxter moves (where $\Gamma$ is an arbitrary $\lambda$-chain, see Corollary \ref{cor0}), we obtain the following strengthening of Theorem \ref{mainconj}.

\begin{corollary}\label{cor1}
    Theorem {\rm \ref{mainconj}} holds for any choice of a $\lambda$-chain (instead of a lex $\lambda$-chain), based on the bijection mentioned above. 
   \end{corollary}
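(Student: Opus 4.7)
The plan is to obtain the claim by composing two bijections that are already available: the bijection furnished by Theorem \ref{mainconj} for the lex $\lambda$-chain, and the isomorphism between models for different $\lambda$-chains provided by Corollary \ref{cor0}. Since the latter is built from quantum Yang-Baxter moves, and we have just established that these moves preserve all the relevant structure, no additional analysis should be required beyond bookkeeping.

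First I would fix an arbitrary $\lambda$-chain $\Gamma$ and apply Theorem \ref{mainconj} to get the canonical affine crystal isomorphism $\phi\colon B^{\otimes \mathbf{p}}_{\rm dD}\to\A(\Gamma_{\rm lex})$, where $B^{\otimes \mathbf{p}}_{\rm dD}$ denotes the subgraph of $B^{\otimes \mathbf{p}}$ consisting of the dual Demazure arrows; this map preserves weights and satisfies $D_B(b)-C=-\height(\phi(b))$ for some global constant $C$. Next, by Corollary \ref{cor0}, I would fix an affine crystal isomorphism $\psi\colon\A(\Gamma_{\rm lex})\to\A(\Gamma)$, realized as a composition of quantum Yang-Baxter moves, which preserves the weights and heights of the vertices. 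Composing, the map $\Psi:=\psi\circ\phi\colon B^{\otimes \mathbf{p}}_{\rm dD}\to\A(\Gamma)$ is an affine crystal isomorphism (composition of affine crystal isomorphisms) that preserves weights (both factors do).

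For the statement about the energy, I would observe that for any $b\in B^{\otimes \mathbf{p}}$ corresponding under $\Psi$ to $J\in\A(\Gamma)$, we have $J=\psi(\phi(b))$, so by the height-preservation part of Corollary \ref{cor0} (which is Proposition \ref{pres-energy}) and the energy/height identity in Theorem \ref{mainconj} (2),
\[
D_B(b)-C=-\height(\phi(b))=-\height(\psi(\phi(b)))=-\height(J)\,,
\]
which is exactly the statement of Theorem \ref{mainconj} (2) with $\Gamma$ in place of $\Gamma_{\rm lex}$. Thus both parts of Theorem \ref{mainconj} hold for $\Gamma$, with the bijection being the stated composition $\Psi$.

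There is no genuine obstacle at this stage, since all the hard work has been absorbed into Theorem \ref{theorem:comm-f-y}, Proposition \ref{prop:yb-weight}, and Proposition \ref{pres-energy}. The only minor subtlety worth noting is that Theorem \ref{mainconj} (1) identifies $\A(\Gamma_{\rm lex})$ with the subgraph of dual Demazure arrows rather than the full affine crystal, so the composed isomorphism inherits this same restriction; this does not affect weights or the energy formula, since the latter is controlled by the dual Demazure arrows via Theorem \ref{theorem:energy_recursion}.
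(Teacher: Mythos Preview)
Your proposal is correct and follows essentially the same approach as the paper: the paper's justification, given in the paragraph immediately preceding the corollary, is precisely the composition of the bijection from Theorem~\ref{mainconj} with the affine crystal isomorphism $\A(\Gamma_{\rm lex})\to\A(\Gamma)$ from Corollary~\ref{cor0}. Your additional verification of the energy/height identity via Proposition~\ref{pres-energy} makes explicit what the paper leaves implicit.
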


\begin{remark}\label{affcris0}
There are several ways to connect two $\lambda$-chains $\Gamma$ to $\Gamma'$ via the moves in Proposition \ref{reversing}. A priori, the corresponding compositions of quantum Yang-Baxter moves give different affine crystal isomorphisms between $\A(\Gamma)$ and $\A(\Gamma')$ in Corollary \ref{cor0}. Therefore, in Corollary \ref{cor1} we have a collection of a priori different affine crystal isomorphisms between   $B^{\otimes \mathbf{p}}$ and $\A(\Gamma)$, for a fixed $\lambda$-chain $\Gamma$. All this is due to the fact that $B^{\otimes \mathbf{p}}$ is not necessarily connected under the dual Demazure arrows, cf. Remark \ref{rem:conn} (1) and Remark \ref{rem:demazure} (2). However, we make the following conjecture.
\end{remark}

\begin{conjecture}\label{conj1}
All the affine crystal isomorphisms between $\A(\Gamma)$ and $\A(\Gamma')$ in Corollary {\rm \ref{cor0}} are identical. The same is true about the isomorphisms between $B^{\otimes \mathbf{p}}$ and $\A(\Gamma)$ in Corollary {\rm \ref{cor1}}.
\end{conjecture}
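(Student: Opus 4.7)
The plan is to reduce the conjecture to a coherence statement about closed loops of quantum Yang-Baxter moves. Observe first that the second part follows from the first: any isomorphism $B^{\otimes\mathbf{p}}\to\A(\Gamma)$ in Corollary \ref{cor1} factors as the fixed bijection from Theorem \ref{mainconj} followed by some composition of QYB moves $\A(\Gamma_{\rm lex})\to\A(\Gamma)$, so uniqueness of the latter forces uniqueness of the former. The first part is in turn equivalent to showing that every closed loop $\Gamma=\Gamma_0\to\Gamma_1\to\cdots\to\Gamma_N=\Gamma$ of QYB moves induces the identity on $\A(\Gamma)$: given two sequences from $\Gamma$ to $\Gamma'$, one forms a loop by reversing the second, each QYB move being an involution.

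Next I would isolate elementary generating loops and verify their triviality. Three types should suffice: (a) double reversals $\Gamma\to\Gamma'\to\Gamma$, trivial by involutivity; (b) commutation squares from two disjoint reversals affecting non-overlapping segments of the $\lambda$-chain, trivial because the moves act on disjoint positions and the data (admissibility, paths in $\QB(W)$) splits accordingly; and (c) higher-order loops arising from codimension-$3$ strata of the affine hyperplane arrangement, in bijection with rank-$3$ root subsystems $\overline{\Phi}\subset\Phi$. Assuming the $2$-complex obtained by attaching $2$-cells along these elementary loops is simply connected---an analogue of Tits' coherence theorem for alcove paths---any closed loop decomposes into pieces of types (a)--(c), leaving only type (c) to verify.

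The case analysis for type (c) would be carried out by enumerating rank-$3$ subsystems ($A_1^3$, $A_1{\times}A_2$, $A_3$, $A_1{\times}C_2$, $B_3$, $C_3$, $A_1{\times}G_2$) and writing, for each, the explicit loop of $\lambda$-chains obtained by a transverse perturbation across the codimension-$3$ face. The effect of the successive QYB moves can then be traced using the explicit description of Section \ref{explicitmoves}, combined with the uniqueness of shortest paths in $\QB(W)$ (Theorem \ref{thm:shell}) applied locally to the three rank-$2$ dihedral subgroups of $\overline{\Phi}$, reducing each case to a finite combinatorial verification.

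The principal obstacle will be precisely this rank-$3$ verification. In the quantum setting, QYB moves involve down steps in $\QB(W)$, and the bookkeeping of the quantum corrections across codimension-$3$ interactions appears delicate, resisting a uniform treatment beyond brute-force enumeration. A conceptually cleaner alternative would be to extend Theorem \ref{theorem:comm-f-y} to the non-Demazure $0$-arrows in $B^{\otimes\mathbf{p}}$: any composition of QYB moves would then become an affine crystal automorphism of the connected affine crystal $B^{\otimes\mathbf{p}}$, and uniqueness would follow immediately from the uniqueness of the combinatorial $R$-matrix (Remark \ref{rem:conn}(2)). Such an extension, however, likely requires genuinely new combinatorial input, since the alcove model is currently tailored only to the dual Demazure subcrystal.
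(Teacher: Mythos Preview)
This statement is a \emph{conjecture} in the paper, not a theorem; the paper offers no proof. What the paper does supply is a proposed route: realize the non-dual Demazure $0$-arrows in the quantum alcove model, extend Theorem~\ref{theorem:comm-f-y} to them, and then invoke connectedness of $\A(\Gamma)$ (Remark~\ref{rem:conn}(1)) so that uniqueness of the affine crystal isomorphism follows. Remark~\ref{affcris} records that this route succeeds unconditionally in the perfect case. You mention exactly this approach in your final paragraph, and in fact it is the paper's \emph{only} approach; your coherence/loop argument is not considered there.

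Your primary strategy is thus genuinely different, and it is honest about being incomplete. Two places are real gaps rather than routine verifications. First, the assumption that the $2$-complex of $\lambda$-chains with $2$-cells glued along elementary loops is simply connected is doing essential work and is not proved; the analogy with Tits-style coherence for reduced words is suggestive, but $\lambda$-chains are reduced alcove paths in an \emph{affine} arrangement, and the correct statement (which reduced galleries, which codimension-$2$ and codimension-$3$ faces, which elementary relations) needs to be formulated and established before the reduction to types (a)--(c) is valid. Second, your type~(c) verification is only a plan: the explicit tables in Section~\ref{explicitmoves} cover rank~$2$, and pushing them through all rank-$3$ subsystems with quantum (down) steps is precisely the obstruction you identify. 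Until both of these are filled in, the argument remains a program rather than a proof, which is consistent with the paper leaving the statement as a conjecture.
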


In order to prove this conjecture, we need $\A(\Gamma)$ to be connected (see Remark \ref{affcris0}). So we need to realize the non-dual Demazure $0$-arrows in the quantum alcove model (in addition to the dual Demazure arrows), and prove the corresponding strengthening of Theorem \ref{theorem:comm-f-y}. Note that the non-dual Demazure $0$-arrows are realized in \cite{unialcmod2} in an analogous model, namely the {\em quantum LS-path model}. Thus, the challenge is to translate this construction into the setup of the quantum alcove model, via the bijection in \cite{unialcmod2} between the two mentioned models. The new construction will be  considerably more involved than the one in \eqref{eqn:rootF} for the dual Demazure arrows, cf. \cite{lalgam}[Example 4.9].

\begin{remark}\label{affcris} If all the tensor factors in $B^{\otimes \mathbf{p}}$ are perfect crystals, then there is a unique affine crystal isomorphism between $B^{\otimes \mathbf{p}}$ and $\A(\Gamma)$, by Remark \ref{rem:demazure} (2). Conjecture \ref{conj1} then follows. 
\end{remark}

Now let $\mathbf{p}$ be a composition and $ \mathbf{p'}$  a permutation of it. Recall the corresponding $\lambda$-chains $\Gamma$ and $\Gamma'$ constructed in \eqref{ggp}. By Corollary \ref{cor1}, we have affine crystal isomorphisms between $B^{\otimes \mathbf{p}}$ and $\A(\Gamma)$, as well as  between $B^{\otimes \mathbf{p'}}$ and $\A(\Gamma')$. Also recall from Remark \ref{rem:conn} (2) that there is a unique affine crystal isomorphism $ B^{\otimes \mathbf{p} }~ \cong~B^{\otimes
       \mathbf{p'} } $, namely the combinatorial $R$-matrix. The uniqueness property in Remark \ref{affcris} leads to the following result.

\begin{corollary}\label{cor2}
    Suppose that all the tensor factors in $B^{\otimes \mathbf{p} }$ are perfect crystals.
   Then the quantum Yang-Baxter moves realize the combinatorial $R$-matrix as an affine crystal isomorphism between $\A(\Gamma)$ and $\A(\Gamma')$, in the sense mentioned above.
    \label{corollary:R-matrix}
\end{corollary}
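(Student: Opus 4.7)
The plan is to reduce the statement to a pure uniqueness argument, assembled from three isomorphisms whose existence has already been established.

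First, I would invoke Corollary \ref{cor1} to obtain affine crystal isomorphisms $\phi \colon B^{\otimes \mathbf{p}} \to \A(\Gamma)$ and $\phi' \colon B^{\otimes \mathbf{p}'} \to \A(\Gamma')$, where both $\Gamma$ and $\Gamma'$ are the concatenated $\lambda$-chains built in \eqref{ggp}. Second, by Proposition \ref{reversing} (2), we can pass from $\Gamma$ to $\Gamma'$ by a sequence of moves of the form $\Gamma_i \to \Gamma_{i+1}$ as in \eqref{yblambdachain}; applying quantum Yang-Baxter moves along this sequence and composing (using Theorem \ref{theorem:comm-f-y}, Proposition \ref{prop:yb-weight} and Proposition \ref{pres-energy}), one gets an affine crystal isomorphism $\Psi \colon \A(\Gamma) \to \A(\Gamma')$ preserving weights and heights, as packaged in Corollary \ref{cor0}.

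Third, I would form the composition
\[
R := (\phi')^{-1} \circ \Psi \circ \phi \colon B^{\otimes \mathbf{p}} \longrightarrow B^{\otimes \mathbf{p}'}.
\]
This is an affine crystal isomorphism, being a composition of such. Now invoke the hypothesis that every tensor factor in $B^{\otimes \mathbf{p}}$ (and therefore also in $B^{\otimes \mathbf{p}'}$) is perfect: by Remark \ref{rem:demazure} (2), the graph $B^{\otimes \mathbf{p}}$ remains connected after the removal of non-dual Demazure $0$-arrows, and similarly for $B^{\otimes \mathbf{p}'}$. Consequently, as recorded in Remark \ref{affcris} and Remark \ref{rem:conn} (2), there is a \emph{unique} affine crystal isomorphism between any two such tensor products (including from $B^{\otimes \mathbf{p}}$ to itself, from $B^{\otimes \mathbf{p}}$ to $\A(\Gamma)$, etc.). The combinatorial $R$-matrix is by definition the unique affine crystal isomorphism $B^{\otimes \mathbf{p}} \to B^{\otimes \mathbf{p}'}$, so it must coincide with $R$. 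Translated under $\phi$ and $\phi'$, this says precisely that $\Psi$ realizes the combinatorial $R$-matrix, which is the statement of the corollary.

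The conceptually delicate point, which is really the only thing to watch for, is that perfectness is used twice: once to guarantee that the composed isomorphism $R$ we constructed agrees with the combinatorial $R$-matrix, and also implicitly to ensure that the isomorphisms $\phi,\phi'$ supplied by Corollary \ref{cor1} are themselves unambiguous, so that the statement ``realizes the $R$-matrix'' is unambiguous. Without perfectness, as explained in Remark \ref{affcris0}, different choices of intermediate $\lambda$-chains and different sequences of Yang-Baxter moves could yield a priori distinct isomorphisms, and the corollary would have to be replaced by the weaker statement recorded in Conjecture \ref{conj1}. Under the perfectness hypothesis this ambiguity vanishes, and the proof reduces to the uniqueness argument above.
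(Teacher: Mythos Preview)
Your proposal is correct and follows essentially the same route as the paper. The paper's argument is the short paragraph immediately preceding the corollary: invoke Corollary~\ref{cor1} for the isomorphisms $B^{\otimes\mathbf{p}}\cong\A(\Gamma)$ and $B^{\otimes\mathbf{p}'}\cong\A(\Gamma')$, recall from Remark~\ref{rem:conn}~(2) that the combinatorial $R$-matrix is the unique affine crystal isomorphism $B^{\otimes\mathbf{p}}\to B^{\otimes\mathbf{p}'}$, and then use the uniqueness from Remark~\ref{affcris} (which rests on perfectness via Remark~\ref{rem:demazure}~(2)) to conclude. Your write-up simply makes the composition $R=(\phi')^{-1}\circ\Psi\circ\phi$ explicit and spells out the double role of perfectness, which the paper leaves implicit.
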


\begin{remarks} (1) If the strengthening of  Theorem \ref{theorem:comm-f-y} corresponding to the non-dual Demazure $0$-arrows (see above) is proved, then Corollary \ref{cor2} holds in full generality, not just in the perfect case.  

 (2) In Examples \ref{example:yb_ex1} and \ref{example:yb_ex2}, we showed how we can specialize our type-independent construction of the combinatorial $R$-matrix to the tableau model in type $A$; the specialization amounts to permuting strictly increasing columns, which can also be done using Sch\"utzenberger's {jeu de taquin} (sliding algorithm) on two columns \cite{fulyt}. The crucial ingredient for the specialization is the affine crystal isomorphism from the quantum alcove model to the tableau model which was described in Section \ref{subsection:TypeA_alcove}. A similar isomorphism to the corresponding tableau model, based on Kashiwara-Nakashima (KN) columns \cite{kancgr}, was constructed in type $C$ in \cite{Lenart,lalgam}, and one is being developed in type $B$ in \cite{BL}. These maps can also be used to specialize our construction to the corresponding tableau models, i.e., to permute KN columns. Note that the corresponding extensions of the sliding algorithm are involved and not transparent, see \cite{lecsc,lecst}.   
\end{remarks}

\section{Proof of Theorem \ref{theorem:comm-f-y}}\label{someproofs}

In Section \ref{pf1} we present a geometric interpretation of the quantum alcove model, as well as some additional related results, all of which are needed in the proof of Theorem \ref{theorem:comm-f-y}. In Section \ref{pf2} we derive several lemmas, after which we complete the proof in Section \ref{pf3}.

\subsection{Revisiting the quantum alcove model}\label{pf1}

In this section we use freely the notation from Section \ref{subsection:qalcove} (as opposed to the one in Section \ref{ybmoves}). We fix an admissible subset $J = \left\{ j_1  < \ldots < j_s \right\} $ in ${\mathcal A}(\Gamma)$, and recall the corresponding roots $\gamma_i$ defined in \eqref{defgamma}, which form the sequence $\Gamma(J)$. Fix also an index $p$ for a crystal operator, between $0$ and the rank of the root system $\Phi$. We will need the following lemma from \cite{lalgam}.

\begin{lemma}{\rm \cite{lalgam}}
		\label{lemma:admissible2}
		 Assume that $r_{j_a}\dots r_{j_1}(\alpha_p)>0$ and 
		$r_{j_{b}}\dots r_{j_1}(\alpha_p)<0$, for $0\leq a < b$
		(if $a=0$, then the first condition is void). Then there exists $i$ with $a < i \leq b$ such that 
		$\gamma_{j_{i}}=\alpha_p$.
	\end{lemma}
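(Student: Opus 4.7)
The plan is to rephrase the lemma using $v_i := w_i^{-1}(\alpha_p) = r_{j_i}\cdots r_{j_1}(\alpha_p)$, where $w_i := r_{j_1}\cdots r_{j_i}$, so that the recursion $v_i = s_{\beta_{j_i}}(v_{i-1})$ holds. Since $\gamma_{j_i} = w_{i-1}(\beta_{j_i})$ by \eqref{defgamma}, the conclusion $\gamma_{j_i} = \alpha_p$ is equivalent to $v_{i-1} = \beta_{j_i}$, and the hypothesis becomes $v_a > 0$ and $v_b < 0$. Because this finite sequence of roots transitions from positive to negative, I will select the smallest $i \in (a, b]$ with $v_{i-1} > 0$ and $v_i < 0$, reducing the lemma to a single-edge claim.

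The single-edge claim reads: for a QBG edge $w \to w s_\beta$ labeled by $\beta \in \Phi^+$, if $\gamma := w^{-1}(\alpha_p) > 0$ and $s_\beta(\gamma) < 0$, then $\gamma = \beta$. I handle the two edge types separately. For a Bruhat cover $w \lessdot w s_\beta$, the length relation $\ell(w s_\beta) = \ell(w) + 1$ forces $w(\beta) > 0$; combined with the simpleness of $\alpha_p = w(\gamma)$, this will rule out $\gamma \ne \beta$. Specifically, if $\gamma \ne \beta$ then $\gamma$ has a partner $\gamma' := -s_\beta(\gamma)$ under the involution on $S^- := \{\delta \in \Phi^+ : s_\beta(\delta) < 0\}$, and $w(\gamma) + w(\gamma')$ is a positive integer multiple of $w(\beta)$; the Bruhat-cover inversion-set balance together with $w(\gamma)$ simple and $w(\gamma') > 0$ will produce a contradiction. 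For a quantum down-step, I will reduce to the rank-two subsystem $\overline{\Phi}$ generated by $\alpha_p$ and $w(\beta)$: the QBG edge restricts to a QBG edge in this subsystem by the compatibility results underlying Theorem \ref{thm:shell}, and the claim is then verified by inspection of the rank-two QBGs, as enumerated in Section \ref{explicitmoves}.

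The main obstacle I anticipate is the Bruhat cover case with non-simple reflecting root $\beta$, since the symmetric difference of the inversion sets of $w$ and $ws_\beta$ can contain several roots and naive inversion-counting does not pin down $\gamma = \beta$. The simpleness of $\alpha_p$ must be used essentially; one clean way to organize the argument is to descend to the rank-two subsystem generated by $\gamma$ and $\beta$, thereby treating both QBG edge types uniformly via reduction to rank two and invoking the explicit enumeration of rank-two QBGs that appears elsewhere in this paper.
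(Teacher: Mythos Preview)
The paper does not give its own proof of this lemma; it is quoted from \cite{lalgam}. So there is no in-paper argument to compare against, and I will only assess the correctness of your approach.

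Your reduction to a single quantum Bruhat edge is sound, but the single-edge argument has genuine gaps.

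\emph{Bruhat cover, $p>0$.} Your ``partner'' argument is not completed: the assertion $w(\gamma')>0$ is not justified, and you do not explain what contradiction it produces. The clean way to handle this case is the lifting/exchange property for simple reflections: from $w\lessdot ws_\beta$, $s_p w>w$ (i.e.\ $w^{-1}(\alpha_p)>0$), and $s_p(ws_\beta)<ws_\beta$ (i.e.\ $(ws_\beta)^{-1}(\alpha_p)<0$), one gets $s_pw=ws_\beta$, hence $w(\beta)=\alpha_p$. Incidentally, for a down step with $p>0$ the case cannot occur at all: $\ell(ws_\beta)=\ell(w)-\ell(s_\beta)$ gives $\Phi(s_\beta)\subset\Phi(w)$, while $\gamma\in\Phi(s_\beta)$ and $w(\gamma)=\alpha_p>0$ force $\gamma\notin\Phi(w)$, a contradiction.

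\emph{Rank-two reduction.} This does not work as stated. The relevant restriction result is Theorem~\ref{qbg2}, not Theorem~\ref{thm:shell}, and it does carry the edge $w\to ws_\beta$ to $\overline{w}\to\overline{w}s_\beta$ in $\QB(\overline{W})$ for the subsystem $\overline{\Phi}$ generated by $\gamma$ and $\beta$. But the crucial hypothesis that $w(\gamma)=\alpha_p$ is \emph{simple} does not transfer: one only gets $\overline{w}(\gamma)=\lfloor w\rfloor^{-1}(\alpha_p)\in\overline{\Phi}^+$, with no reason for it to be simple in $\overline{\Phi}$. Without simpleness the single-edge claim is already false in type $A_2$: take the cover $s_1\lessdot s_2s_1$ with $\beta=\alpha_1+\alpha_2$ and $\gamma=\alpha_2$; then $\gamma>0$, $s_\beta(\gamma)=-\alpha_1<0$, and $s_1(\gamma)=\alpha_1+\alpha_2>0$, yet $\gamma\ne\beta$. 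So inspection of the rank-two graphs cannot close the argument.

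\emph{The case $p=0$.} Here $\alpha_p=-\widetilde{\alpha}$ is not a simple root, so neither the lifting property nor your simpleness-based reasoning applies. This case is used in the paper (in the proof of Lemma~\ref{critcase}) and requires a separate argument, which your proposal does not supply.
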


We will present a simpler encoding of (the graph of) the piecewise-linear function $g_{\alpha_p}$, on which the construction of $f_p(J)$ is based. Recall that the positions of the roots $\pm\alpha_p$ in the sequence $\Gamma(J)$ are recorded in the sequence $I_{\alpha_p}(J)$, see \eqref{eqn:ILalpha}. We associate with each pair 
$( \gamma_i,\, \varepsilon_i)$ for $i \in I_{ \alpha_p}(J)$  
a letter in the alphabet $A = \{ +, - , \pm , \mp\}$ as follows:  
\begin{equation}
    \label{eqn:gword}
 (\alpha_p,1) \mapsto +,\,\, (-\alpha_p, 1) \mapsto -,\,\,
  (\alpha_p,-1) \mapsto \pm,\,\, (-\alpha_p, -1) \mapsto \mp. 
\end{equation}
We also represent $\sgn( \inner{\gamma_{ \infty }}{ \alpha_p^{\vee}})$ by
its sign. 
In this way, we can encode $g_{ \alpha_p}$ by a word in the alphabet $A$, which will be denoted by $S_p$.

\begin{example}
    \label{example:gword}
   The above encoding can be applied to any root $\alpha$. For instance, we can encode $g_{ \alpha }$ in Figure \ref{fig:galpha} by the word
   $\pm-++\pm-\pm++$.
\end{example}

\begin{lemma}{\rm \cite{lalgam}}
    \label{proposition:pm_condition}
    In the word $S_p$ associated to $g_{ \alpha_p}$, a non-terminal
    symbol from the set $\{ +, \mp \}$ must be followed by a symbol
    from the set $\{ + , \pm \}$.
\end{lemma}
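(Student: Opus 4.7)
The plan is to translate the letter types into sign conditions on $w_k^{-1}(\alpha_p)$ and then invoke Lemma \ref{lemma:admissible2} to derive a contradiction with the consecutiveness of positions in $\widehat{I}_{\alpha_p}(J)$. Write $J = \{j_1 < \cdots < j_s\}$, $w_k := r_{j_1}\cdots r_{j_k}$ (with $w_0 := 1$), and $\widehat{I}_{\alpha_p}(J) = \{i_1 < \cdots < i_n < i_{n+1} = \infty\}$. Fix a position $\ell \le n$ with letter in $\{+, \mp\}$, and let $a$ be the number of folding positions strictly less than $i_\ell$.

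The first step is a short direct computation of the sign of $w^{-1}(\alpha_p)$ just after processing position $i_\ell$. In the letter $+$ case, $i_\ell \notin J$, so the current product is $w = w_a$ and $w_a(\beta_{i_\ell}) = \alpha_p$ gives $w^{-1}(\alpha_p) = \beta_{i_\ell} > 0$. In the letter $\mp$ case, $i_\ell = j_{a+1} \in J$ with $w_a(\beta_{i_\ell}) = -\alpha_p$, so after folding we have $w = w_{a+1} = w_a r_{i_\ell}$ and $w^{-1}(\alpha_p) = s_{\beta_{i_\ell}}(-\beta_{i_\ell}) = \beta_{i_\ell} > 0$. Set $a' := a$ in the first case and $a' := a+1$ in the second; in both, $w_{a'}^{-1}(\alpha_p) > 0$, and all remaining folding positions $j_{a'+1}, \ldots, j_s$ lie strictly after $i_\ell$.

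The second step is a proof by contradiction. Suppose the successor letter has $\gamma$-sign equal to $-$: in the finite case $\ell+1 \le n$ this means $\gamma_{i_{\ell+1}} = -\alpha_p$, and in the terminal case $\ell+1 = n+1$ it means $\sgn\inner{\gamma_\infty}{\alpha_p^\vee} = -1$. A parallel computation yields $w_b^{-1}(\alpha_p) < 0$, where $b$ is the number of folding positions $< i_{\ell+1}$ in the finite case and $b := s$ in the terminal case (using $\gamma_\infty = w_s(\rho)$ together with the regular dominance of $\rho$, so that $\inner{\gamma_\infty}{\alpha_p^\vee}$ and $w_s^{-1}(\alpha_p)$ have the same sign). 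Since $w_{a'}^{-1}(\alpha_p)$ and $w_b^{-1}(\alpha_p)$ have opposite signs, necessarily $a' < b$; then Lemma \ref{lemma:admissible2} produces an index $c$ with $a' < c \le b$ and $\gamma_{j_c} = \alpha_p$, so $j_c \in I_{\alpha_p}(J)$ with $i_\ell < j_c \le j_b < i_{\ell+1}$ in the finite case (or $j_c > i_\ell$ in the terminal case). This contradicts the consecutiveness of $i_\ell$ and $i_{\ell+1}$ in $\widehat{I}_{\alpha_p}(J)$.

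The main obstacle is the bookkeeping that distinguishes whether $i_\ell$ itself is or is not a folding position, since this shifts the relevant partial product from $w_a$ to $w_{a+1}$; getting the ``after $i_\ell$'' product $w_{a'}$ right in each case is precisely what ensures the sign flip detected by Lemma \ref{lemma:admissible2} lands strictly between $i_\ell$ and $i_{\ell+1}$ rather than at $i_\ell$ itself. Once $a'$ is correctly identified, the lemma delivers the contradiction uniformly in both letters $+$ and $\mp$, and for both finite and terminal successors.
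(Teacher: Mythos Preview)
Your argument is correct. The paper does not actually prove this lemma here; it is quoted from \cite{lalgam} without proof, so there is no in-paper argument to compare against. Your approach, translating the letters $+$ and $\mp$ into the sign condition $w_{a'}^{-1}(\alpha_p)>0$ immediately after position $i_\ell$, translating a hypothetical negative successor into $w_b^{-1}(\alpha_p)<0$, and then invoking Lemma~\ref{lemma:admissible2} to produce a folding position $j_c$ with $\gamma_{j_c}=\alpha_p$ strictly between $i_\ell$ and $i_{\ell+1}$, is exactly the natural one and is consistent with how these tools are used in \cite{lalgam}. The bookkeeping you flag (choosing $a'=a$ versus $a'=a+1$) is handled correctly, as is the terminal case via $\gamma_\infty=w_s(\rho)$ and the regularity of $\rho$; note also that the argument goes through unchanged when $p=0$ (so $\alpha_p=\theta<0$), since Lemma~\ref{lemma:admissible2} and your sign computations are stated directly in terms of $\alpha_p$ and the $\lambda$-chain roots $\beta_i$ are positive.
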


We will now present a geometric realization of the admissible subset $J$, and discuss the related interpretation of the word $S_p$. 
The relevant structure, called a {\em gallery} (cf. \cite{gallsg}), is a sequence 
\begin{equation}\label{pij}\Pi=(F_0,A_0,F_1,A_1, F_2, \dots , F_m, A_m, F_{m+1})\end{equation}
such that $A_0,\dots,A_m$ are alcoves;
$F_j$ is a codimension one common face of the alcoves $A_{j-1}$ and $A_j$,
for $j=1,\dots,m$; $F_0$ is a vertex of the first alcove $A_0$; and 
$F_{m+1}$ is a vertex of the last alcove $A_m$. 
Furthermore, we require that $F_0=\{0\}$, $A_0=A_\circ$, and 
$F_{m+1}=\{\mu\}$ for some weight $\mu$,
which is called the weight of the gallery. 
The folding operator $\phi_j$ is the operator which acts on a gallery by  leaving its initial segment from $A_0$ to $A_{j-1}$ intact, and by reflecting the remaining tail in the affine hyperplane containing the face $F_j$. In other words, we define
$$\phi_j(\Pi):=(F_0,A_0, F_1, A_1, \dots, A_{j-1}, F_j'=F_j,  A_{j}', F_{j+1}', A_{j+1}', \dots,  A_m', F_{m+1}'),$$
where $F_j\subset H_{\alpha,k}$, $A_i' := s_{\alpha,k}(A_i)$, and $F_i':=s_{\alpha,k}(F_i)$, for $i=j,\dots,m+1$.

 Our fixed alcove path \eqref{alcovepath} determines an obvious gallery $\Pi(\emptyset)$ 
 of weight $-\lambda$. Given the fixed admissible subset $J$, we define the gallery $\Pi(J)$ as $\phi_{j_1}\cdots \phi_{j_s} (\Pi(\emptyset))$. We denote $\Pi(J)$ as in \eqref{pij}. 
We can easily detect the folding positions in $J$ based on the gallery $\Pi(J)$: given $i\in [m]$, we have $i\in J$ (so $\varepsilon_i=-1$) if and only if $A_{i-1}=A_i$. By definition, the root $\gamma_i$ in $\Gamma(J)$ is orthogonal to the hyperplane containing the face $F_i$. Moreover, if $i\not\in J$, then $\gamma_i$ points in the direction from $A_i$ to $A_{i-1}$; otherwise, it points in the direction from the face $F_i$ towards $A_{i-1}=A_i$. 

We conclude this section by interpreting the signs in $S_p$. We need to consider the positive side of an {\em oriented} affine hyperplane $\stackrel{\rightarrow}{H}_{\alpha,k}$, which is the same as the negative side of $\stackrel{\rightarrow}{H}_{-\alpha,-k}$, and which consists of all $\mu$ with $\inner{\mu}{\alpha^\vee}>k$. Fix $i$ in $I_{\alpha_p}(J)$, so $(\gamma_i,\varepsilon_i)=(\pm\alpha_p,\pm 1)$. Based on the above discussion, we have the following cases related to the position of the alcoves $A_{i-1}$ and $A_i$ in $\Pi(J)$ with respect to the oriented hyperplane $\stackrel{\rightarrow}{H}_{\alpha_p,-\sgn(\alpha_p)l_i^J}$, cf. \eqref{deflev} and \eqref{eqn:gword}:
\begin{itemize}
\item if $(\gamma_i,\varepsilon_i)=(\alpha_p,1) \mapsto +$, then $A_{i-1}$ is on the positive side, and $A_i$ on the negative one;
\item if $(\gamma_i,\varepsilon_i)=(-\alpha_p,1) \mapsto -$, then $A_{i-1}$ is on the negative side, and $A_i$ on the positive one;
\item if $(\gamma_i,\varepsilon_i)=(\alpha_p,-1) \mapsto \pm$, then $A_{i-1}=A_i$ are on the positive side;
\item if $(\gamma_i,\varepsilon_i)=(-\alpha_p,-1) \mapsto \mp$, then $A_{i-1}=A_i$ are on the negative side.
\end{itemize}

\subsection{Lemmas}\label{pf2} We continue using the above setup, except that now the indexing set $I$ for the $\lambda$-chain is the one defined in \eqref{indexset}; we also use the rest of the notation in Section \ref{ybmoves}. 

We start by observing that, by Lemma \ref{nonempty-int}, the reflections in the affine hyperplanes 
\begin{equation}\label{rank2}
H_{\beta_i,-l_i}\,,\;\;\;\;\;\mbox{for $i\in[q]$}\,.
\end{equation}
 are the reflections of a rank 2 root system. We will use this fact implicitly below.

As above, $J$ is a fixed admissible subset in ${\mathcal A}(\Gamma)$, and $p$ is an index for a crystal operator. Denoting, as usual, $\widehat{r}_i:=s_{\beta_i,-l_i}$ for any $i\in I$, we let $\widehat{w}(J):=\widehat{r}_{j_1}\ldots\widehat{r}_{j_s}$; then, by abusing notation, we set
\begin{equation}\label{setup2}
\widehat{u}:=\widehat{w}(J\cap\{\overline{1},\ldots,\overline{t}\})\,,\;\;\;\;\mbox{and}\;\;\;\;\widehat{w}:=\widehat{w}(J\cap(\{\overline{1},\ldots,\overline{t}\}\cup[q]))\,,\end{equation}
cf. \eqref{setup1}. Now consider the images of the affine hyperplanes in \eqref{rank2} under $\widehat{u}$, namely
\begin{equation}\label{affhypu}
\widehat{u}(H_{\beta_i,-l_i})=H_{u(\beta_i),m_i}\,,\;\;\;\;\;\mbox{for $i\in[q]$}\,.
\end{equation}
The roots $\{\gamma_1,\ldots,\gamma_q\}$ form a subset of $\{\pm u(\beta_1),\ldots,\pm u(\beta_q)\}$, because $\{\pm\beta_1,\ldots,\pm\beta_q\}$ is invariant under the corresponding reflections. 

Assuming that $u(\beta_k)=\pm\alpha_p$ for some $k\in[q]$, which is unique, we will refer to the following hyperplane in \eqref{affhypu}, with the appropriate orientation:
\begin{equation}\label{hypp}\stackrel{\rightarrow}{H}:=\stackrel{\rightarrow}{H}_{\alpha_p,\sgn(u^{-1}(\alpha_p))m_k}\,.\end{equation}
Note that, by Lemma \ref{nonempty-int}, if the set $I_{\alpha_p}(J)\cap [q]$ is non-empty, then for any $i$ in this set we have $\stackrel{\rightarrow}{H}=\stackrel{\rightarrow}{H}_{\alpha_p,-\sgn(\alpha_p)l_i^J}$; in particular, the heights $l_i^J$ corresponding to the root $\alpha_p$ are identical in the ``$[q]$-window'' given by the indices $1,\ldots,q$ in the set $I$. 

We continue to use the notation \eqref{pij} for the gallery $\Pi(J)$, except that now the indexing set $I$ is the one in \eqref{indexset}; more precisely, we write
\[\Pi(J)=(\{0\},A_\circ,F_{\overline{1}},A_{\overline{1}},F_{\overline{2}},\ldots,F_{\overline{t}},A_{\overline{t}},F_1,A_1,F_2,\ldots,F_q,A_q,F_{\overline{t+1}},\ldots,F_{\overline{n}},A_{\overline{n}},\{-\mu(J)\})\,.\] 
We use a similar notation for $\Pi(\emptyset)$, namely we denote its alcoves and their corresponding faces by $A_i^\emptyset$ and $F_i^\emptyset$, for $i\in I$. 

\begin{lemma}\label{lemuw} Assuming that $u(\beta_k)=\pm\alpha_p$ for some $k\in[q]$, we have:
\begin{enumerate}
\item[{\rm (1)}] $u^{-1}(\alpha_p)>0$ if and only if $A_{\overline{t}}$ is on the positive side of $\stackrel{\rightarrow}{H}$;
\item[{\rm (2)}] $w^{-1}(\alpha_p)<0$ if and only if $A_{q}$ is on the positive side of $\stackrel{\rightarrow}{H}$.
\end{enumerate}
\end{lemma}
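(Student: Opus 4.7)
The plan is to reduce both statements to elementary observations about the unfolded gallery $\Pi(\emptyset)$, by unwinding the folding operations through the affine maps $\widehat{u}$ and $\widehat{w}$. First I would verify, by a straightforward induction on the number of folds (using that $\phi_j$ leaves the initial segment of a gallery intact and reflects the tail in the hyperplane of $F_j$), that each alcove $A_i$ of $\Pi(J)$ is an affine Weyl translate of the corresponding $A_i^\emptyset$ of $\Pi(\emptyset)$: if $j_1<\cdots<j_p$ are the folding positions of $J$ that are $\le i$, then $A_i=\widehat{r}_{j_1}\cdots\widehat{r}_{j_p}(A_i^\emptyset)$. Specializing to $i=\overline{t}$ and $i=q$ and using \eqref{setup2}, this yields $A_{\overline{t}}=\widehat{u}(A_{\overline{t}}^\emptyset)$ and $A_q=\widehat{w}(A_q^\emptyset)$.

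Next I would locate these unfolded alcoves relative to the rank 2 arrangement $\{H_{\beta_i,-l_i}\}_{i\in[q]}$. Since $\Pi(\emptyset)$ is a reduced alcove path, each such hyperplane is crossed exactly once, at position $i\in[q]$, in the $-\beta_i$ direction. As $l_i\ge 0$, the fundamental alcove $A_\circ$ lies on the positive $\beta_i$-side of $H_{\beta_i,-l_i}$, and no $[q]$-step has been taken by position $\overline{t}$; hence $A_{\overline{t}}^\emptyset$ still sits on the positive $\beta_i$-side for every $i\in[q]$. Dually, after all $q$ crossings, $A_q^\emptyset$ lies on the negative $\beta_i$-side of $H_{\beta_i,-l_i}$ for every $i\in[q]$.

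To prove (1), I pull $\stackrel{\rightarrow}{H}$ back through $\widehat{u}^{-1}$: a direct computation using $u(\beta_k)=\pm\alpha_p$ and the definition \eqref{hypp} identifies $\widehat{u}^{-1}(\stackrel{\rightarrow}{H})$ as $\stackrel{\rightarrow}{H}_{\beta_k,-l_k}$ when $u^{-1}(\alpha_p)>0$, and as its orientation reversal otherwise. Since $A_{\overline{t}}^\emptyset$ lies on the positive $\beta_k$-side of $H_{\beta_k,-l_k}$ by the previous paragraph, the equivalence in (1) is immediate.

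The same strategy proves (2), with the added wrinkle that $\widehat{w}=\widehat{u}\sigma$, where $\sigma:=\widehat{r}_{j_{a+1}}\cdots\widehat{r}_{j_b}$ accumulates the $J\cap[q]$-folds; by Lemma \ref{nonempty-int}, $\sigma$ lies in the affine reflection group of the rank 2 arrangement, so $\sigma^{-1}(H_{\beta_k,-l_k})=H_{\beta_{k'},-l_{k'}}$ for a suitable $k'\in[q]$. Tracking the orientation signs through $\widehat{w}^{-1}(\stackrel{\rightarrow}{H})=\sigma^{-1}\widehat{u}^{-1}(\stackrel{\rightarrow}{H})$, its positive side turns out to be the positive $\beta_{k'}$-side precisely when $w^{-1}(\alpha_p)=\sgn(u^{-1}(\alpha_p))\,\tau^{-1}(\beta_k)>0$ (with $\tau=u^{-1}w$), and the negative $\beta_{k'}$-side otherwise. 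Since $A_q^\emptyset$ is on the negative $\beta_{k'}$-side regardless of which $k'$ occurs, it lies on the positive side of $\widehat{w}^{-1}(\stackrel{\rightarrow}{H})$ exactly when $w^{-1}(\alpha_p)<0$, and applying $\widehat{w}$ gives (2). The main obstacle is the consistent bookkeeping of orientations: one cannot assume $\widehat{w}(H_{\beta_k,-l_k})=\stackrel{\rightarrow}{H}$ even as unoriented hyperplanes, and Lemma \ref{nonempty-int} is exactly what makes the conclusion depend uniformly on $k'$.
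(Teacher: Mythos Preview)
Your proposal is correct and follows essentially the same approach as the paper. Both arguments establish $A_{\overline{t}}=\widehat{u}(A_{\overline{t}}^\emptyset)$ and $A_q=\widehat{w}(A_q^\emptyset)$, locate the unfolded alcoves on the positive (resp.\ negative) side of every $H_{\beta_i,-l_i}$, and then track how the orientation of $\stackrel{\rightarrow}{H}$ transforms; the only cosmetic difference is that you pull $\stackrel{\rightarrow}{H}$ back through $\widehat{u}^{-1}$ and $\widehat{w}^{-1}$ (using Lemma~\ref{nonempty-int} to see the pullback stays in the rank~2 arrangement), while the paper pushes the alcoves forward and tracks the sign via $\sgn(v^{-1}(\beta_k))$ directly.
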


\begin{proof}
Note first that $A_{\overline{t}}^\emptyset$ is on the positive side of all hyperplanes in \eqref{rank2}, more precisely of $\stackrel{\rightarrow}{H}_{\beta_i,-l_i}$, whereas $A_q^\emptyset$ is on the negative side. Clearly, $A_{\overline{t}}^\emptyset$ is on the same side with respect to $\stackrel{\rightarrow}{H}_{\beta_k,-l_k}$ as $A_{\overline{t}}=\widehat{u}(A_{\overline{t}}^\emptyset)$ is with respect to $\stackrel{\rightarrow}{H}_{u(\beta_k),m_k}$. So $A_{\overline{t}}$ is on the side of $\stackrel{\rightarrow}{H}$ of sign $\sgn(u^{-1}(\alpha_p))$. 

For the second part, let $v:=w(J\cap[q])$ and $\widehat{v}:=\widehat{w}(J\cap[q])$, cf. \eqref{setup1} and \eqref{setup2}, so we have $w=uv$ and $\widehat{w}=\widehat{u}\widehat{v}$. First note that, since $A_q^\emptyset$ is on the negative side of $\stackrel{\rightarrow}{H}_{\beta_k,-l_k}$, the alcove $\widehat{v}(A_q^\emptyset)$ is on the side of the same hyperplane of sign $-\sgn(v^{-1}(\beta_k))$; here we used the straightforward fact that, if $\mu$ is on the positive side of $\stackrel{\rightarrow}{H}_{\alpha,0}$ and $x$ is a Weyl group element, then $x(\mu)$ is on the side of the same hyperplane of sign $\sgn(x^{-1}(\alpha))$. Then, by the same reasoning as in the proof of the first part, we deduce that $A_q=\widehat{w}(A_q^\emptyset)=\widehat{u}(\widehat{v}(A_q^\emptyset))$ is on the side of $\stackrel{\rightarrow}{H}$ of sign
\[-\sgn(v^{-1}(\beta_k))\cdot\sgn(u^{-1}(\alpha_p))=-\sgn(v^{-1}(u^{-1}(\alpha_p)))=-\sgn(w^{-1}(\alpha_p))\,;\]
here the first equality follows from the hypothesis, since $u^{-1}(\alpha_p)=\sgn(u^{-1}(\alpha_p))\beta_k$. This concludes the proof.
\end{proof}

Now recall from Section \ref{pf1} the word $S_p$ in the alphabet $\{ +, - , \pm , \mp\}$ encoding the function $g_{\alpha_p}$, and from Section \ref{subsection:qalcove} the way in which this function is used to construct $f_p(J)$. We will see that the subword $\overline{S}_p$ of $S_p$ which corresponds to the $[q]$-window can have only the following forms:
\begin{enumerate}
        \item[(a)] $\phantom{-}\pm \ldots \pm$, with at least one symbol $\pm$
        \item[(b)] $\phantom{-}\pm \ldots \pm + $ 
        \item[(c)] $-\pm \ldots \pm$ 
           \item[(d)] $-\pm \ldots \pm + $
        \item[(e)] $\phantom{-}\:\mp$
            \item[(f)] empty word,
        \end{enumerate}
    where in cases (b)$-$(d) 
    the string $ \pm \ldots \pm  $ can be empty. 

Using Lemma \ref{lemuw}, we now give easy criteria for these cases based on the Weyl group elements $u$ and $w$ corresponding to $J$.

\begin{lemma}\label{critcase} The following hold.
\begin{enumerate} 
\item[\rm{(1)}] If $\alpha_p\not\in\{\pm u(\beta_1),\ldots,\pm u(\beta_q)\}$, then we are in case {\rm (f)}.
\item[\rm{(2)}] If $\alpha_p\in\{\pm u(\beta_1),\ldots,\pm u(\beta_q)\}$, $u^{-1}(\alpha_p)<0$, and $w^{-1}(\alpha_p)<0$, then we are in case {\rm (c)}.
\item[\rm{(3)}] If $\alpha_p\in\{\pm u(\beta_1),\ldots,\pm u(\beta_q)\}$, $u^{-1}(\alpha_p)<0$, and $w^{-1}(\alpha_p)>0$, then we are in cases {\rm (d)$-$(f)}.
\item[\rm{(4)}] If $\alpha_p\in\{\pm u(\beta_1),\ldots,\pm u(\beta_q)\}$, $u^{-1}(\alpha_p)>0$, and $w^{-1}(\alpha_p)<0$, then we are in case {\rm (a)}.
\item[\rm{(5)}] If $\alpha_p\in\{\pm u(\beta_1),\ldots,\pm u(\beta_q)\}$, $u^{-1}(\alpha_p)>0$, and $w^{-1}(\alpha_p)>0$, then we are in case {\rm (b)}.
\end{enumerate}
\end{lemma}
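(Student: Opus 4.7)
The plan is to combine Lemma~\ref{lemuw}---which translates the sign hypotheses on $u^{-1}(\alpha_p)$ and $w^{-1}(\alpha_p)$ into the geometric positions of $A_{\overline{t}}$ and $A_q$ with respect to $\stackrel{\rightarrow}{H}$---with the observation that, within the $[q]$-window, the subword $\overline{S}_p$ records crossings and foldings against a single oriented hyperplane; one then matches the possible start/end side patterns to the forms (a)--(f).

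I would first dispose of case~(1). Since $\gamma_i \in u(\overline{\Phi}) = \{\pm u(\beta_1),\ldots,\pm u(\beta_q)\}$ for every $i \in [q]$, the hypothesis $\alpha_p \notin u(\overline{\Phi})$ forces $I_{\alpha_p}(J) \cap [q] = \emptyset$, so $\overline{S}_p$ is empty, giving form~(f). For cases~(2)--(5), assume $\alpha_p \in u(\overline{\Phi})$; then $\stackrel{\rightarrow}{H}$ is well defined via~\eqref{hypp}, and by the remark following it, every $\alpha_p$-hyperplane arising in the $[q]$-window coincides with $\stackrel{\rightarrow}{H}$. Using Lemma~\ref{lemuw} ($A_{\overline{t}}$ lies on the positive side of $\stackrel{\rightarrow}{H}$ iff $u^{-1}(\alpha_p)>0$; $A_q$ lies on the positive side iff $w^{-1}(\alpha_p)<0$) together with the sign dictionary from Section~\ref{pf1} (``$+$'' takes positive to negative, ``$-$'' negative to positive, ``$\pm$'' stays positive, ``$\mp$'' stays negative), the first symbol of $\overline{S}_p$ must start on the side of $A_{\overline{t}}$, and its last symbol must end on the side of $A_q$. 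Reading off the start/end sides of the six forms---(a): positive/positive, (b): positive/negative, (c): negative/positive, (d) and (e): negative/negative, (f): positive/positive or negative/negative---and matching against each of cases~(2)--(5) yields: (2)$\rightarrow$(c); (3)$\rightarrow$(d), (e), or (f); (4)$\rightarrow$(a) or (f); and (5)$\rightarrow$(b). Cases~(2), (3), (5) are then complete.

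The main obstacle will be ruling out form~(f) in case~(4). Given $u^{-1}(\alpha_p)>0$ and $w^{-1}(\alpha_p)<0$, I need to exhibit some $i\in[q]$ with $\gamma_i=\pm\alpha_p$. I would argue this by invoking the unique shortest strictly increasing path in $\QB(\overline{W})$ from $u$ to $w$ provided by Theorem~\ref{thm:shell}, and tracing how the reflections indexed by $J\cap[q]$ transform the sequence $(\beta_i)_{i\in[q]}$ into $(\gamma_i)_{i\in[q]}$, thereby producing the required index. The hypothesis that $\alpha_p$ is a simple root of $\Phi$ (or equals $\theta$) is essential, since the analogous claim fails for general (non-simple) roots of the rank-2 subsystem, as one can check in a direct $A_2$ example. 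A finite case-by-case verification over the dihedral types $A_1\times A_1$, $A_2$, $C_2$, $G_2$---using the explicit description of quantum Yang--Baxter moves in Section~\ref{explicitmoves}---will then conclude the proof.
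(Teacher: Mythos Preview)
Your overall strategy---using Lemma~\ref{lemuw} to convert the sign hypotheses into the positions of $A_{\overline{t}}$ and $A_q$ relative to $\stackrel{\rightarrow}{H}$, together with the single-hyperplane observation---matches the paper's. However, there are two gaps.

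First, your matching argument presupposes that $\overline{S}_p$ has one of the forms (a)--(f), but this is exactly what the lemma establishes; the paper's ``We will see'' before the list signals that the classification is proved \emph{by} Lemma~\ref{critcase}, not before it. The single-hyperplane constraint alone still permits words such as $+-$ or $\mp\mp$. The paper rules these out by combining the single-hyperplane constraint with Lemma~\ref{proposition:pm_condition} (a non-terminal $+$ or $\mp$ in $S_p$ is followed by $+$ or $\pm$): together these force $+$ and $\mp$ to be terminal in $\overline{S}_p$, and then the list (a)--(f) follows. You need to invoke Lemma~\ref{proposition:pm_condition} explicitly before your start/end-side matching is valid.

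Second, and more seriously, your plan for ruling out form~(f) in case~(4) does not work. You correctly flag this as the crux and correctly observe that the simple-root hypothesis on $\alpha_p$ is essential---indeed, the reduced claim already fails in $\QB(\overline{W})$: take $\overline{\Phi}$ of type $A_2$, folding set $\{1,2\}\subset[3]$, and $\eta:=u^{-1}(\alpha_p)=\beta_2$; then $v=s_{\beta_1}s_{\beta_2}$ satisfies $v^{-1}(\eta)<0$, yet the folded roots are $(\beta_1,\beta_3,\beta_1)$ and $\pm\beta_2$ never appears. But precisely because the simple-root hypothesis lives in $\Phi$ and is lost upon passage to $\overline{\Phi}$, a finite case-check over the dihedral types cannot use it, so your proposed verification cannot close the gap. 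The paper instead invokes Lemma~\ref{lemma:admissible2} (from \cite{lalgam}), a statement about admissible subsets in the full $\QB(W)$: if $r_{j_a}\cdots r_{j_1}(\alpha_p)>0$ and $r_{j_b}\cdots r_{j_1}(\alpha_p)<0$, then $\gamma_{j_i}=\alpha_p$ for some $a<i\le b$. Taking $a,b$ so that $u=r_{j_1}\cdots r_{j_a}$ and $w=r_{j_1}\cdots r_{j_b}$ gives $I_{\alpha_p}(J)\cap[q]\ne\emptyset$ directly.
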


\begin{proof}
We will use implicitly some facts discussed in Section \ref{pf1} and earlier in this section. 

Part (1) is clear, as we noted at the beginning of this section that $\{\gamma_1,\ldots,\gamma_q\}$ is a subset of $\{\pm u(\beta_1),\ldots,\pm u(\beta_q)\}$, so if the latter set does not contain $\alpha_p$, then $I_{\alpha_p}(J)\cap[q]$ is empty.  

From now on we assume that $\alpha_p\in\{\pm u(\beta_1),\ldots,\pm u(\beta_q)\}$. Recall the oriented hyperplane $\stackrel{\rightarrow}{H}$ defined in \eqref{hypp}, and the fact that its non-oriented version $H$ is the unique hyperplane orthogonal to $\alpha_p$ which can contain the faces $F_1,\ldots,F_q$ of the gallery $\Pi(J)$. 

If $u^{-1}(\alpha_p)<0$, then $A_{\overline{t}}$ is on the negative side of $\stackrel{\rightarrow}{H}$, by Lemma \ref{lemuw} (1). If $I_{\alpha_p}(J)\cap[q]$ is non-empty, so we are not in case (f), it follows that  the first face $F_i$, with $i\in[q]$, which can be contained in $\stackrel{\rightarrow}{H}$ gives as first symbol of $\overline{S}_p$ either $\mp$ or $-$. In the case of $\mp$, by Lemma \ref{proposition:pm_condition}, a potential second symbol could only be $+$ or $\pm$, which would correspond to a hyperplane orthogonal to $\alpha_p$ different from ${H}$; as this is impossible, we are in case (e). By a completely similar reasoning, if $\overline{S}_p$ starts with $-$, then we are in cases (c) or (d). To conclude, if $u^{-1}(\alpha_p)<0$, then we are in one of the cases {\rm (c)$-$(f)}.

If $w^{-1}(\alpha_p)<0$, then $A_{q}$ is on the positive side of $\stackrel{\rightarrow}{H}$, by Lemma \ref{lemuw} (2). So cases (d)$-$(f) are ruled out, which means that we are in case (c). Similarly, if $w^{-1}(\alpha_p)>0$, then we are in one of the cases {\rm (d)$-$(f)}.

From now assume that $u^{-1}(\alpha_p)>0$, so $A_{\overline{t}}$ is on the positive side of $\stackrel{\rightarrow}{H}$, by Lemma \ref{lemuw} (1). Like above, it follows that, if $\overline{S}_p$ is non-empty, then its first symbol is $\pm$ or $+$. Also like above, based on Lemma \ref{proposition:pm_condition} and on the uniqueness property of the hyperplane $H$ which was also used above, we then deduce that the only possible cases are (a), (b), or (f).

If $w^{-1}(\alpha_p)<0$, then $A_{q}$ is on the positive side of $\stackrel{\rightarrow}{H}$, by Lemma \ref{lemuw} (2), so case (b) is ruled out. Now Lemma \ref{lemma:admissible2} applies, so we deduce that $I_{\alpha_p}(J)\cap[q]$ is non-empty, which means that case (f) is ruled out. Thus, we are in case (a). Similarly, if $w^{-1}(\alpha_p)>0$, then we are in case (b); note that, in this situation, $I_{\alpha_p}(J)\cap[q]\ne\emptyset$ is obvious, as $A_{\overline{t}}$ and $A_q$ are on opposite sides of $\stackrel{\rightarrow}{H}$.  
\end{proof}

We also need to identify the concrete way in which the crystal operator $f_p$ acts on the admissible subset $J$, more precisely, with respect to the $[q]$-window.

\begin{lemma}\label{actfp} The effect of $f_p$ on the $[q]$-window is as follows.
\begin{enumerate} 
\item[\rm{(1)}] In cases {\rm (c)$-$(f)}, the crystal operator $f_p$ does not interact with the $[q]$-window. 
\item[\rm{(2)}] In case {\rm (a)}, if there is interaction, then $f_p$ removes an element of $[q]$ from $J$.
\item[\rm{(3)}] In case {\rm (b)}, if there is interaction, then $f_p$ adds an element of $[q]$ to $J$.
\end{enumerate}
\end{lemma}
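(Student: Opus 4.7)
The plan is to perform a case analysis on the six possible forms (a)$-$(f) of the $[q]$-window word $\overline{S}_p$. The key dictionary is that $\pm$ and $\mp$ encode window indices lying in $J$ while $+$ and $-$ encode window indices not in $J$; combined with the constraints $m \in J \cup \{\infty\}$ and $k \notin J$ from the definition \eqref{eqn:rootF} of $f_p$, this immediately restricts which window positions can serve as $m$ or $k$. A further essential input is the constant midpoint-height property within the window (from the discussion after \eqref{hypp}): every $i \in I_{\alpha_p}(J) \cap [q]$ has $\sgn(\alpha_p) l_i^J$ equal to a common value, which is $h + \tfrac{1}{2}$ in cases (a) and (b) and $h - \tfrac{1}{2}$ in cases (c), (d), (e), where $h$ denotes the integer height of $g_{\alpha_p}$ at the two window boundaries.

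For the ``no interaction'' claims in (c)$-$(f), I would show that the global maximum $M$ strictly exceeds the window midpoint height (hence $m \notin [q]$) and that any would-be $k$ in the window either lies in $J$ or is ruled out by the predecessor requirement. Concretely: in (f) the window contributes no index at all to $I_{\alpha_p}(J)$; in (e) the unique $\mp$ is a strict local trough of $g_{\alpha_p}$ and lies in $J$; in (c) and (d) the height $h$ at the left boundary of the window, combined with the piecewise-linear shape of $g_{\alpha_p}$, forces some adjacent midpoint to attain value $\geq h + \tfrac{1}{2}$, yielding $M > h - \tfrac{1}{2}$, while the potential non-$J$ candidates for $k$ (the leading $-$ in (c), the leading $-$ or the terminal $+$ in (d)) are eliminated using that $k$ must be the immediate predecessor of $m$ in $\widehat{I}_{\alpha_p}$ together with Lemma \ref{proposition:pm_condition}.

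For case (a), every window letter is a $\pm \in J$, so no window index can serve as $k$; interaction therefore forces $m \in [q]$, and $f_p$ removes $m$ from $J \cap [q]$ (inserting the predecessor $k$ outside the window). For case (b), the unique non-$J$ window letter is the terminal $+$, and I would argue that whenever $f_p$ interacts with the window, $k$ coincides with this $+$, so that $f_p$ adds this element to $J \cap [q]$. The main obstacle is excluding the ``pure removal'' configuration $m \in [q]$, $k \notin [q]$ in case (b): here $m$ would have to be the first window $\pm$ and $k$ its predecessor outside the window, and one must derive a contradiction from the constant window midpoint height $h + \tfrac{1}{2}$, the integer-versus-half-integer parity of $g_{\alpha_p}$ at integer boundaries and midpoints, and Lemma \ref{proposition:pm_condition} constraining the symbols immediately preceding the window.
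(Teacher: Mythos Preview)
Your proposal is correct and follows essentially the same route as the paper: a case-by-case analysis of the window word $\overline{S}_p$, using the constant midpoint height inside the window together with Lemma~\ref{proposition:pm_condition} to locate (or exclude) the indices $m$ and $k$. The paper's own argument is in fact terser than yours in case~(b), simply asserting that $k$ must be the trailing $+$; your identification of the ``pure removal'' configuration as the obstacle is apt, and the cleanest resolution is already implicit in your parity remark: if $m$ were one of the window $\pm$'s then $M=h+\tfrac12$, yet the value of $g_{\alpha_p}$ at the right boundary of the window (just after the trailing $+$) is $h+1>M$, contradicting that $M$ is the maximum of $g_{\alpha_p}$. Note that it is Lemma~\ref{proposition:pm_condition} applied to the symbol \emph{following} the trailing $+$ (forcing the next midpoint to be $h+\tfrac32$) that gives an alternative route here, not the preceding-symbol direction you mention.
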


\begin{proof}
As case (f) is trivial, we first justify part (1) in cases (c)$-$(e). In these cases, the symbol preceding $\overline{S}_p$ in $S_p$ must be $-$ or $\pm$ (by Lemma \ref{proposition:pm_condition}), which means that the function $g_{\alpha_p}$ has a larger value before the $[q]$-window than everywhere inside it. On the one hand, this implies that the index $m$ in the definition \eqref{eqn:rootF} of $f_p$ cannot be in the $[q]$-window. On another hand, the only way in which the index $k$ in the definition \eqref{eqn:rootF} of $f_p$ can be in the $[q]$-window is if we are in case (d), and the symbol following $\overline{S}_p$ in $S_p$ is $\pm$. But the local maximum of $g_{\alpha_p}$ corresponding to this symbol $\pm$ cannot be the {\em first} occurence of its maximum, due to the previous observation about values before the $[q]$-window. So even in case (d) the index $k$ cannot be in the $[q]$-window.

In cases (a) and (b), the crystal operator $f_p$ can both interact or not interact with the $[q]$-window. Assuming that it does, let us analyze the concrete form of the interaction, based on its definition \eqref{eqn:rootF}. In case (a), $f_p$  is forced to 
remove the folding in the position corresponding to the first $\pm $ in $\overline{S}_p$, and add a
folding in some position $k \in  \left\{ \overline{ 1 } < \ldots < \overline{ t } \right\} $, cf. \eqref{indexset}. 
In case (b), $f_p$ is forced to add a folding
in the position corresponding to the trailing $+$ in $\overline{S}_p$ and possibly remove a folding in
some position $m\in \left\{ \overline{t+1} < \cdots < \overline{n} \right\}$, cf. \eqref{indexset}.
\end{proof}

\subsection{Completion of the proof of Theorem \ref{theorem:comm-f-y}}\label{pf3} At this point we have all the needed ingredients for the proof. We use the same notation as in Sections \ref{pf1} and \ref{pf2}.

\begin{proof}[Proof of Theorem {\rm \ref{theorem:comm-f-y}}.] Let $J':=Y(J)$. First observe that, by definition, $J$ and $J'$ have the same associated Weyl group elements $u$ and $w$. Therefore, by Lemma \ref{critcase}, either cases (d)$-$(f) apply to both $J$ and $J'$, or case (c) applies, or (a), or (b). Furthermore, the sequences of roots $\Gamma(J)$ and $\Gamma(J')$ are identical outside the $[q]$-window, that is, for indices in $I\setminus[q]$. We conclude that the graphs of the functions $g_{\alpha_p}$ corresponding to $J$ and $J'$ are identical outside the $[q]$-window, whereas inside it they differ by an exchange of patterns in cases (d)$-$(f), or in case (c), or (a), or (b). In fact, if we are in cases (a) or (b), then either $f_p$ interacts with the $[q]$-window for both $J$ and $J'$, or for none of them; we call the corresponding cases (a1), respectively (a2), and (b1), respectively (b2). 

Let us denote the Weyl group elements $u$ and $w$ associated to $f_p(J)$ and $f_p(J')$ by $\widetilde{u}$ and $\widetilde{w}$, respectively $\widetilde{u}'$ and $\widetilde{w}'$. We also recall recall the total order \eqref{indexset}, as well as the elements $k$ and $m$ in the definition \eqref{eqn:rootF} of $f_p$. By Lemma \ref{actfp}, $f_p(J)$ is defined if and only if $f_p(J')$ is, and we have the following possibilities (assuming $f_p(J)\ne\Bzero\ne f_p(J')$).
\begin{itemize}
\item If we are in one of the cases (c)$-$(f), or (a2), or (b2), then the pair $k,m$ is the same for $J$ and $J'$, and $k,m\in I\setminus[q]$. Most of the time we have $m\le\overline{t}$ or $k\ge\overline{t+1}$, so $\widetilde{u}=\widetilde{u}'=u$ and $\widetilde{w}=\widetilde{w}'=w$. However, in case (f) we can also have $k\le\overline{t}$ and $m\ge\overline{t+1}$; then, by Remark \ref{changepath}, we have $\widetilde{u}=\widetilde{u}'=s_pu$ and $\widetilde{w}=\widetilde{w}'=s_pw$.
\item In case (a1) the element $k$ is the same for $J$ and $J'$, and $k\le \overline{t}$. Moreover, $f_p$ removes an element from $J\cap[q]$ and one from $J'\cap[q]$. By Remark \ref{changepath}, we have $\widetilde{u}=\widetilde{u}'=s_pu$ and $\widetilde{w}=\widetilde{w}'=w$.
\item In case (b1) the element $m$ is the same for $J$ and $J'$, and $m\ge \overline{t+1}$. Moreover, $f_p$ adds an element to $J\cap[q]$ and one to $J'\cap[q]$. By Remark \ref{changepath}, we have $\widetilde{u}=\widetilde{u}'=u$ and $\widetilde{w}=\widetilde{w}'=s_pw$.
\end{itemize}

We conclude that in all cases we have 
\[\widetilde{u}=\widetilde{u}'\,,\;\;\;\;\;\widetilde{w}=\widetilde{w}'\,,\;\;\;\;\;f_p(J)\setminus[q]=f_p(J')\setminus[q]\,.\]
By definition, it follows that $Y(f_p(J))=f_p(J')$. 
\end{proof}

\section{Explicit description of the quantum Yang-Baxter moves}\label{explicitmoves}

We will now give a type by type description of the quantum Yang-Baxter moves, after recalling that the classical ones were described in \cite{lenccg}. We use freely the setup in Section \ref{ybmoves}. The mentioned description is given in terms of the dihedral reflection group corresponding to the root system $\overline{\Phi}$, which is a subgroup of the Weyl group $W$ (with root system $\Phi$). We denote this subgroup by $\overline{W}$.

\subsection{Dihedral subgroups of Weyl groups}\label{dihedralsubgps} For more information on the subgroup $\overline{W}$ of $W$ we refer to \cite{shi}. For instance, we can easily describe all such subgroups (i.e., subsystems $\overline{\Phi}$) not of type $A_1\times A_1$ for the classical root systems and the one of type $G_2$, as follows. 
\begin{itemize}
\item In all types $A_{n-1}$, $B_n$, $C_n$, and $D_n$, we have the type $A_2$ subsystems with simple roots $\{\varepsilon_i-\varepsilon_j,\,\varepsilon_j-\varepsilon_k\}$, where $1\le i<j<k\le n$. 
\item In types $B_n$, $C_n$, and $D_n$, we have the type $A_2$ subsystems with simple roots $\{\varepsilon_i-\varepsilon_j,\,\varepsilon_j+\varepsilon_k\}$, where $1\le i<j<k\le n$, or $1\le i<k<j\le n$, or $1\le k<i<j\le n$.  
\item In type $C_n$, we have the type $C_2$ subsystems with simple roots $\{\varepsilon_i-\varepsilon_j,\,2\varepsilon_j\}$, where $1\le i<j\le n$; similarly for type $B_n$. 
\item In type $G_2$, we have only the two obvious (strict) subsystems of type $A_2$.
\end{itemize}
In types $E$ and $F$, the number of subsystems  not of type $A_1\times A_1$ (of all possible types) is indicated in the table below.

\[\begin{array}{|c|c|c|}
        \hline & & \vspace{-7pt}\\
\mbox{Type of $\Phi$} & \mbox{$\overline{\Phi}$ of type $A_2$} & \mbox{$\overline{\Phi}$ of type $B_2/C_2$}\\
\hline
E_6 & 120 & -\\
\hline
E_7 & 336 & -\\
\hline
E_8 & 1120 & - \\
\hline
F_4 & 32 & 18\\
\hline
\end{array}\]


In order to complete the description of our setup, we need a result from \cite{bfpmbo}. Given $w\in W$, consider the partial order on the coset $w\overline{W}$ generated by the relations of the Bruhat order on $W$ corresponding to this coset (i.e., $u<us_\alpha$ for $u\in w\overline{W}$, $\alpha\in\overline{\Phi}$, and $\ell(us_\alpha)>\ell(u)$). Note that this order is, in general, different from the partial order induced from the Bruhat order on $W$. With this clarification, we make a slight correction in the statement of \cite{bfpmbo}[Lemma~5.1].

\begin{proposition}\label{br2} {\rm \cite{bfpmbo}} The Bruhat order on $\overline{W}$ (viewed as a dihedral reflection group) is isomorphic to the partial order on $w\overline{W}$ generated by the relations of the Bruhat order on $W$ corresponding to this coset. More precisely, $w\overline{W}$ has a unique minimal element $\lfloor w\rfloor$, and the map $\overline{w}\mapsto \lfloor w\rfloor\overline{w}$ is an isomorphism of posets $\overline{W}$ and $w\overline{W}=\lfloor w\rfloor\overline{W}$. 
\end{proposition}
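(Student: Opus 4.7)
The plan is to deduce the proposition from two standard ingredients in the theory of reflection subgroups of Coxeter groups: the existence of a unique minimal-length representative in every coset of a reflection subgroup, and a length-additivity formula for products of such a representative with an element of the subgroup. Both rest on Dyer's general theory of reflection subgroups, specialized here to the dihedral subgroup $\overline{W}\le W$.

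First I would record the consequences of Dyer's theorem for our setting. Since $\overline{W}$ is a reflection subgroup of the Coxeter system $(W,S)$, it is itself a Coxeter group with a canonically defined set of two simple reflections, its own length function $\ell_{\overline{W}}$, and an intrinsic Bruhat order that coincides with the one described in the statement. Moreover, each coset $w\overline{W}$ contains a unique element of minimal $\ell_W$-length, which I would denote $\lfloor w \rfloor$, and which is characterized by the property that $\ell_W(\lfloor w \rfloor\, s_\alpha) > \ell_W(\lfloor w \rfloor)$ for every $\alpha \in \overline{\Phi}^+$. This gives the first assertion of the proposition.

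Next I would establish the length-additivity formula
\[ \ell_W\bigl(\lfloor w \rfloor\,\overline{v}\bigr) \;=\; \ell_W(\lfloor w \rfloor) + \ell_{\overline{W}}(\overline{v}) \]
for every $\overline{v} \in \overline{W}$, by induction on $\ell_{\overline{W}}(\overline{v})$. The induction step requires that when $\gamma\in\overline{\Phi}^+$ satisfies $\ell_{\overline{W}}(\overline{v}\,s_\gamma)=\ell_{\overline{W}}(\overline{v})+1$, one has $\ell_W(\lfloor w \rfloor\,\overline{v}\,s_\gamma)=\ell_W(\lfloor w \rfloor\,\overline{v})+1$; the tool is Dyer's description of the inversion set of $\lfloor w \rfloor\,\overline{v}$ as the disjoint union of the inversion set of $\lfloor w \rfloor$ and the $\lfloor w \rfloor$-image of the $\overline{\Phi}^+$-inversion set of $\overline{v}$ (this image lies in $\Phi^+$ precisely because $\lfloor w \rfloor$ is the minimal coset representative, which is what prevents an $\ell_W$-decrease or an $\ell_W$-jump larger than one).

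Finally, the poset isomorphism falls out directly. Define $\iota:\overline{W}\to w\overline{W}$ by $\iota(\overline{v}):=\lfloor w \rfloor\,\overline{v}$; this is a bijection onto $\lfloor w \rfloor \overline{W}=w\overline{W}$. For any cover $\overline{u}\lessdot \overline{u}\,s_\gamma$ in the Bruhat order on $\overline{W}$ (with $\gamma\in\overline{\Phi}^+$), length-additivity gives $\ell_W(\lfloor w \rfloor\,\overline{u}\,s_\gamma)>\ell_W(\lfloor w \rfloor\,\overline{u})$, which is exactly a generating relation of the induced order on $w\overline{W}$; chaining these shows $\iota$ is order-preserving. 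Conversely, each generating step $u<u\,s_\alpha$ of the induced order (with $\alpha\in\overline{\Phi}^+$ and $\ell_W(u\,s_\alpha)>\ell_W(u)$) pulls back under $\iota^{-1}$ to a Bruhat-increasing step in $\overline{W}$, again by length-additivity in reverse, yielding the reverse implication. The main obstacle is the length-additivity step: although straightforward for standard parabolic subgroups, for an arbitrary reflection subgroup it must be handled through Dyer's inversion-set machinery because $\ell_W$ and $\ell_{\overline{W}}$ measure different quantities and multiplication by a reflection from $\overline{W}$ can a priori shift $\ell_W$ by an amount larger than one; I expect that this subtlety is precisely what the ``slight correction'' to \cite{bfpmbo}[Lemma~5.1] addresses.
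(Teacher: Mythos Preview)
The paper does not give its own proof of this proposition; it simply cites \cite{bfpmbo}. So there is no paper proof to compare against, and I will evaluate your argument on its own.

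Your central claim, the length-additivity formula
\[
\ell_W\bigl(\lfloor w \rfloor\,\overline{v}\bigr)=\ell_W(\lfloor w\rfloor)+\ell_{\overline{W}}(\overline{v}),
\]
is false. The paper itself warns you of this in Remark~\ref{rem2}~(1): ``As opposed to the case of parabolic subgroups, the factorization $w=\lfloor w\rfloor\overline{w}$ is not length additive.'' A concrete counterexample: take $W=S_4$ and the type $A_2$ subsystem $\overline{\Phi}$ with simple roots $\alpha_{13}=\varepsilon_1-\varepsilon_3$ and $\alpha_{34}=\varepsilon_3-\varepsilon_4$ (one of those listed in Section~\ref{dihedralsubgps}). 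The coset $e\,\overline{W}=\overline{W}$ has $\lfloor e\rfloor=e$. Now $\overline{v}=s_{(1,3)}$ has $\ell_{\overline{W}}(\overline{v})=1$, but $\ell_W(s_{(1,3)})=3$, so your formula fails already here. Your induction step, which asserts that $\ell_W$ increases by exactly one when $\ell_{\overline{W}}$ does, cannot hold: multiplication by a non-simple reflection of $\Phi$ typically moves $\ell_W$ by more than one, and Dyer's inversion-set decomposition does not prevent this.

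What the proposition actually needs, and what you should prove instead, is only that the \emph{direction} of the length change agrees: for $\gamma\in\overline{\Phi}^+$,
\[
\ell_{\overline{W}}(\overline{v}s_\gamma)>\ell_{\overline{W}}(\overline{v})\iff \ell_W(\lfloor w\rfloor\overline{v}s_\gamma)>\ell_W(\lfloor w\rfloor\overline{v}).
\]
This is exactly Remark~\ref{rem2}~(2). Using the standard criterion $\ell(u)<\ell(us_\alpha)\Leftrightarrow u(\alpha)>0$, the equivalence reduces to: for every $\beta\in\overline{\Phi}$, one has $\beta\in\overline{\Phi}^+$ if and only if $\lfloor w\rfloor(\beta)\in\Phi^+$. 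But this is precisely the defining property of the minimal coset representative (that $\lfloor w\rfloor$ sends $\overline{\Phi}^+$ into $\Phi^+$), which you already have from Dyer. So the proof is shorter and more robust once you drop the spurious additivity claim. Incidentally, the ``slight correction'' the paper refers to concerns the \emph{statement} (the order on $w\overline{W}$ is the one generated by the Bruhat relations within the coset, not the order induced from Bruhat order on $W$), not the length issue you guessed.
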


\begin{remarks}\label{rem2} (1) As opposed to the case of parabolic subgroups, the factorization $w=\lfloor w\rfloor\overline{w}$ is not length additive. 

(2) The last statement of Proposition \ref{br2} can be rephrased as: for any $\alpha\in\overline{\Phi}^+$, we have $\ell(\overline{w})<\ell(\overline{w}s_\alpha)$ if and only if $\ell(\lfloor w\rfloor\overline{w})<\ell(\lfloor w\rfloor\overline{w}s_\alpha)$.
\end{remarks}

We now state a version of Proposition~\ref{br2} for the quantum Bruhat graph; the corresponding structures on $\overline{W}$ and $w\overline{W}$ are no longer isomorphic, but a weaker version of the above statement holds. This result allows us to reduce any Yang-Baxter move to the ones corresponding to rank two quantum Bruhat graphs, which will be then explicitly given. The proof is postponed to Section \ref{secproof}.

\begin{theorem}\label{qbg2} Under the bijection $\overline{w}\mapsto \lfloor w\rfloor\overline{w}$ between $\overline{W}$ and a coset $w\overline{W}$, every edge of the graph on $w\overline{W}$ induced from $\QB(W)$ corresponds to an edge of $\QB(\overline{W})$. In other words, $w \stackrel{\alpha}{\longrightarrow} 
	ws_{\alpha}$ in $\QB(W)$ with $\alpha\in\overline{\Phi}^+$ implies $\overline{w} \stackrel{\alpha}{\longrightarrow} \overline{w}s_{\alpha}$ in $\QB(\overline{W})$.
\end{theorem}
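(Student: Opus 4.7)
The plan is to split the verification into the two cases allowed by an edge of $\QB(W)$. A Bruhat cover $w \lessdot ws_\alpha$ in $W$ with $\alpha \in \overline{\Phi}^+$ has both endpoints in the coset $w\overline{W}$ and automatically gives a cover in the induced sub-poset; Proposition~\ref{br2} then transports it to a cover in $\overline{W}$, whose length difference is $1$, furnishing a Bruhat edge in $\QB(\overline{W})$. The substantive case is the quantum arrow $w \qstep ws_\alpha$, which I would handle at the level of inversion sets.

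The quantum condition $\ell_W(w) = \ell_W(ws_\alpha) + \ell_W(s_\alpha)$ expresses length-additivity of the factorization $w = (ws_\alpha)\, s_\alpha$. Using the standard formula $\ell(uv) = \ell(u) + \ell(v) - 2|\mathrm{Inv}(u) \cap \mathrm{Inv}(v^{-1})|$, this is equivalent to $\mathrm{Inv}_W(ws_\alpha) \cap \mathrm{Inv}_W(s_\alpha) = \emptyset$. The sign-reversal involution $\beta \mapsto -s_\alpha(\beta)$ on $\mathrm{Inv}(s_\alpha)$, whose only fixed point is $\alpha$, then lets me rewrite this emptiness as the clean containment $\mathrm{Inv}_W(s_\alpha) \subseteq \mathrm{Inv}_W(w)$, and an identical derivation inside $\overline{W}$ characterizes a quantum edge $\overline{w} \to \overline{w}s_\alpha$ in $\QB(\overline{W})$ as $\mathrm{Inv}_{\overline{W}}(s_\alpha) \subseteq \mathrm{Inv}_{\overline{W}}(\overline{w})$. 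Since $\overline{\Phi}^+ = \overline{\Phi} \cap \Phi^+$, the identity $\mathrm{Inv}_{\overline{W}}(s_\alpha) = \mathrm{Inv}_W(s_\alpha) \cap \overline{\Phi}^+$ is tautological, and the whole theorem reduces to proving
\[
\mathrm{Inv}_W(w) \cap \overline{\Phi}^+ = \mathrm{Inv}_{\overline{W}}(\overline{w}).
\]

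For this last identity I would factor $w = \lfloor w \rfloor \overline{w}$, so that $w(\beta) = \lfloor w\rfloor(\overline{w}(\beta))$ for $\beta \in \overline{\Phi}^+$; since $\overline{w}$ permutes $\overline{\Phi}$, the identity will follow at once from the auxiliary claim $\lfloor w \rfloor(\overline{\Phi}^+) \subseteq \Phi^+$, because $\lfloor w\rfloor$ would then preserve signs within $\overline{\Phi}$. For the claim, fix $\gamma \in \overline{\Phi}^+$; the element $\lfloor w \rfloor s_\gamma$ is a distinct element of the coset $\lfloor w \rfloor \overline{W}$, so minimality of $\lfloor w \rfloor$ forces $\ell_W(\lfloor w \rfloor s_\gamma) > \ell_W(\lfloor w \rfloor)$, and the standard equivalence $\ell(vs_\gamma) > \ell(v) \Leftrightarrow v(\gamma) > 0$ (valid for every positive root $\gamma$ and any $v \in W$, via the strong exchange property) yields $\lfloor w \rfloor(\gamma) > 0$. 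The main bookkeeping obstacle I anticipate is tracking the sign-reversal involution on $\mathrm{Inv}(s_\alpha)$ correctly and keeping $\mathrm{Inv}(w)$ carefully distinct from $\mathrm{Inv}(w^{-1})$ throughout; once these are in place the chain $\mathrm{Inv}_{\overline{W}}(s_\alpha) = \mathrm{Inv}_W(s_\alpha) \cap \overline{\Phi}^+ \subseteq \mathrm{Inv}_W(w) \cap \overline{\Phi}^+ = \mathrm{Inv}_{\overline{W}}(\overline{w})$ closes the quantum case and completes the proof.
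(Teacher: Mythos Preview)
Your argument for the Bruhat-cover case and for the length-additivity half of the quantum case is fine, and your identity $\mathrm{Inv}_W(w)\cap\overline{\Phi}^+=\mathrm{Inv}_{\overline{W}}(\overline{w})$ (proved via $\lfloor w\rfloor(\overline{\Phi}^+)\subset\Phi^+$) is exactly the paper's \eqref{intphib}. The gap is that you have mis-stated what a quantum edge is. The defining condition \eqref{downqbg} is $\ell(ws_\alpha)=\ell(w)-2\,\htroot(\alpha^\vee)+1$, which is equivalent to the \emph{pair} of conditions $\ell(ws_\alpha)=\ell(w)-\ell(s_\alpha)$ \emph{and} $\alpha$ is a quantum root. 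Your ``quantum condition'' is only the first of these, so your claimed characterization of a down edge in $\QB(\overline{W})$ by $\mathrm{Inv}_{\overline{W}}(s_\alpha)\subseteq\mathrm{Inv}_{\overline{W}}(\overline{w})$ is false as stated.

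This matters: when $\overline{\Phi}$ is of type $B_2/C_2$ there is a short non-simple root $\gamma\in\overline{\Phi}^+$ that is \emph{not} a quantum root in $\overline{\Phi}$, yet one certainly has $\mathrm{Inv}_{\overline{W}}(s_\gamma)\subseteq\mathrm{Inv}_{\overline{W}}(\overline{w_0})$ for the longest element $\overline{w_0}$ of $\overline{W}$; there is no edge $\overline{w_0}\stackrel{\gamma}{\longrightarrow}\overline{w_0}s_\gamma$ in $\QB(\overline{W})$. To close the proof you must show that if $\alpha\in\overline{\Phi}^+$ is a quantum root in $\Phi$ then it is a quantum root in $\overline{\Phi}$. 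The paper does this in its Step~4.2 by reducing to $\overline{\Phi}$ of type $B_2$ and invoking Lemma~\ref{quantumroot}: the only non-quantum root of $\overline{\Phi}$ is the short non-simple root, whose expansion in the simple roots of $\Phi$ necessarily contains a long simple root (because the long simple root of $\overline{\Phi}$ does), so it is not a quantum root in $\Phi$ either. Once you add this check, your inversion-set approach is a clean alternative to the paper's explicit injection $\iota:\Phi(w)\hookrightarrow\Phi(ws_\alpha)$.
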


\subsection{Yang-Baxter moves for rank 2 quantum Bruhat graphs}

By Theorem \ref{qbg2}, the map $Y_{u,w}$ used to define the quantum Yang-Baxter moves in \eqref{setup3} depends only on $\overline{u}$ and $\overline{w}$, so we will denote it by 
$Y_{\overline{u},\overline{w}}$. Hence it suffices to focus on the quantum Bruhat graphs for the dihedral Weyl groups. The ones of type $A_2$, $C_2$, and $G_2$ are shown in Figure \ref{fig:qbg}, where the edge labels correspond to the reflection ordering \eqref{eqn:reflorder}, and $s_1:=s_{\beta_1}$, $s_2:=s_{\beta_q}$. The graph of type $B_2$ is identical with the one of type $C_2$ if we set $\beta_1=\varepsilon_2$ (short root, like in type $C_2$), $\beta_4=\varepsilon_1-\varepsilon_2$ (long root), and $s_1:=s_{\beta_1}$, $s_2:=s_{\beta_4}$, as above. We implicitly use these conventions below. 

\begin{figure}[ht]
    \subfloat[Type $A_2$\label{fig:A2qbg}]{\includegraphics[scale=.45]{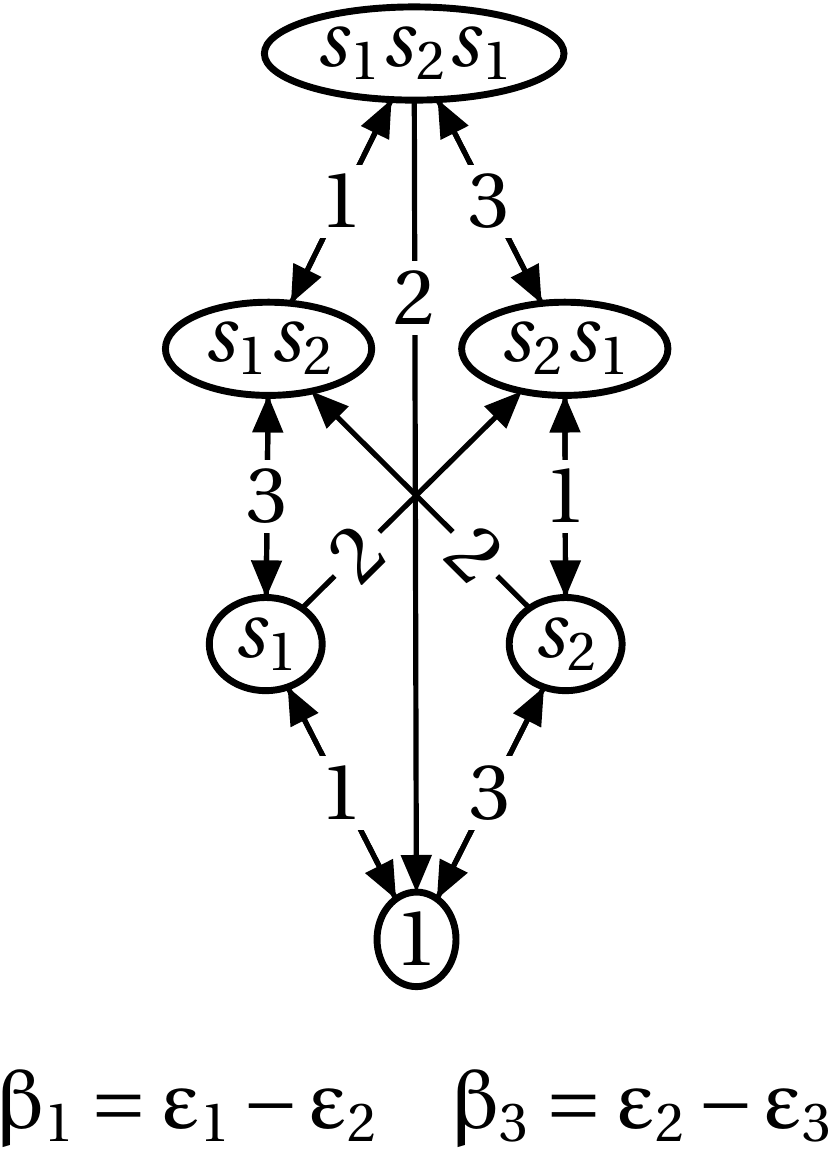}}
    \quad \quad
    \subfloat[Type $C_2$ \label{fig:C2qbg}]{\includegraphics[scale=.45]{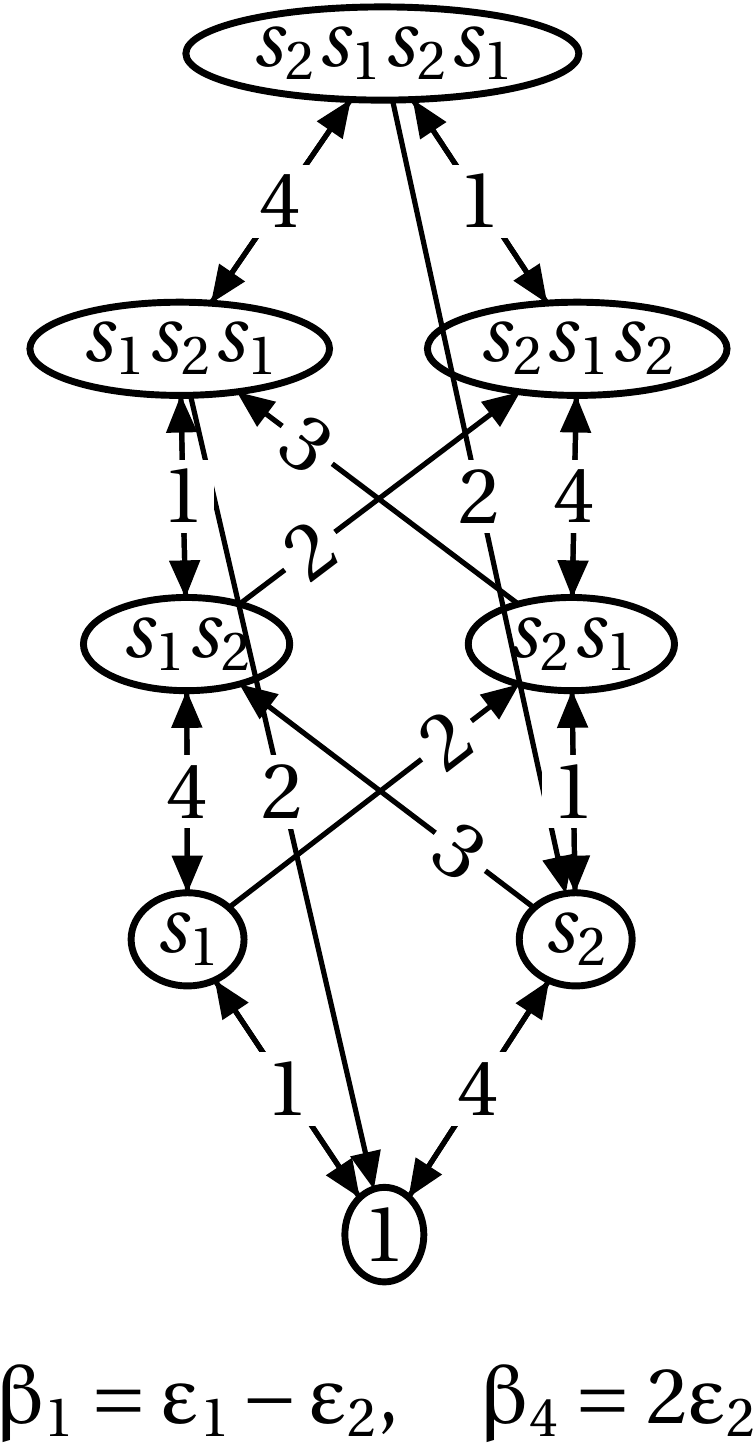}}
    \quad \quad
    \subfloat[Type $G_2$\label{fig:G2qbg}]{\includegraphics[scale=.45]{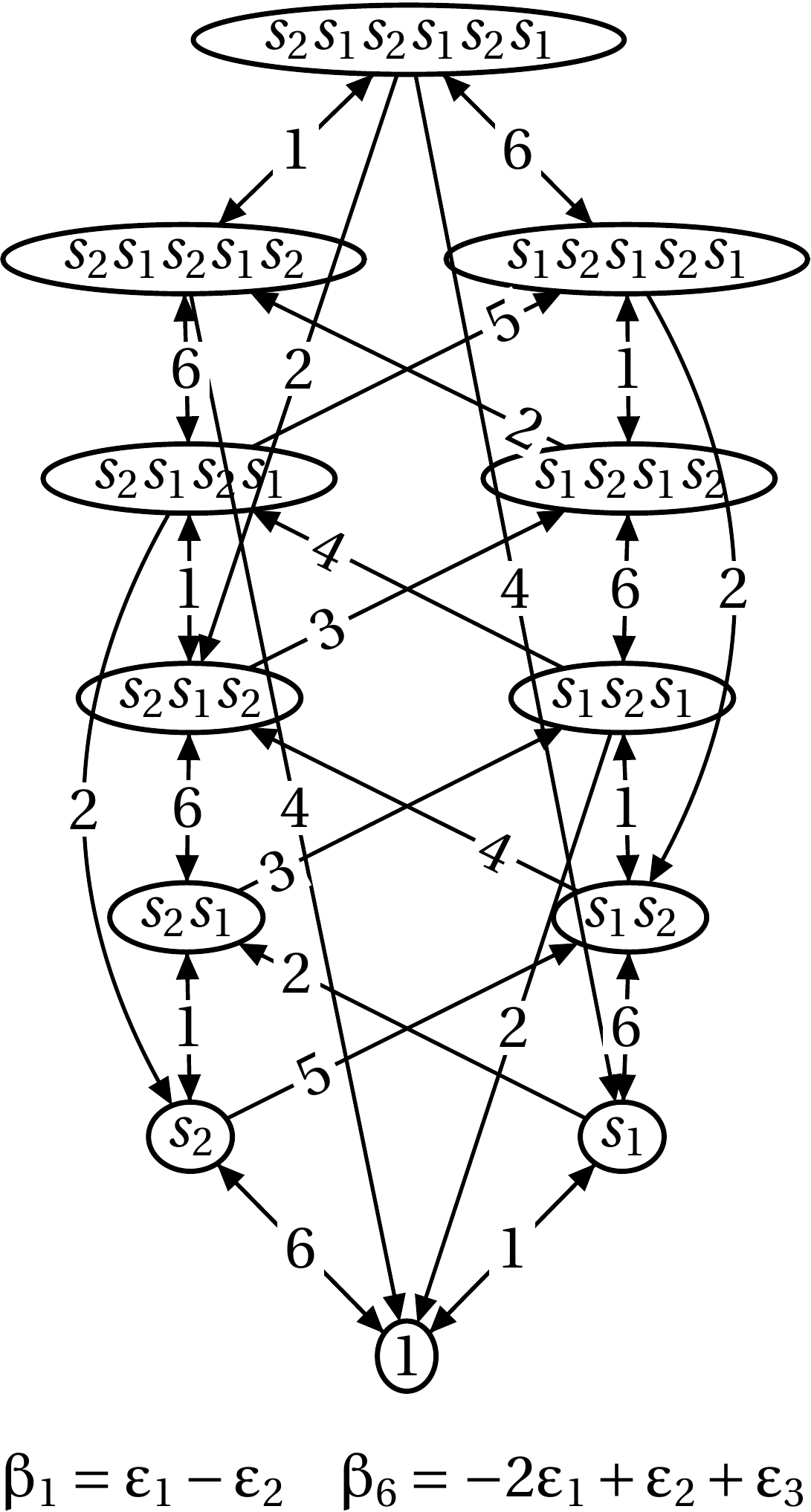}}

    { \hspace{-30pt} \footnotesize Also type $B_2$ with \\
      \hspace{-30pt}$\beta_1 = \varepsilon_2, \;  \beta_4 = \varepsilon_1 - \varepsilon_2$}
    \caption{Quantum Bruhat graphs.}
    \label{fig:qbg}
\end{figure}

Let us now recall from \cite{lenccg} the explicit description of the classical Yang-Baxter moves, i.e., of the map $J\rightarrow Y_{\overline{u},\overline{w}}(J)$ on subsets of $[q]$, for $\overline{u}<\overline{w}$ in $\overline{W}$. Recall that $J$ and $Y_{\overline{u},\overline{w}}(J)$ correspond to saturated chains in Bruhat order from $\overline{u}$ to $\overline{w}$, whose edge labels increase, resp. decrease, with respect to the reflection ordering \eqref{eqn:reflorder}; in fact, as explained in Section \ref{ybmoves}, an index $i$ in $J$ corresponds to the root/edge label $\beta_i$, whereas an index $i$ in $Y_{\overline{u},\overline{w}}(J)$ corresponds to the root $\beta_i':=\beta_{q+1-i}$, cf. \eqref{yblambdachain}. The classical moves can be described in a uniform way (i.e., for all types), and only in terms of $a:=\ell(\overline{u})$ and $b:=\ell(\overline{w})$, as shown in Figure \ref{fig:YB}.

\begin{figure}[h]
\[\begin{array}{ll}
\!\!\!\!\!\!\!\!\!\!\!\mbox{\bfseries Case 0:} &\!\!\!\mbox{$\emptyset\leftrightarrow\emptyset$ if $a=b$}\,.\\[0.05in] 
\!\!\!\!\!\!\!\!\!\!\!\mbox{\bfseries Case 1.1:} &\!\!\!\mbox{$\{1\}\leftrightarrow\{q\}$ if $0\le a=b-1\le q-1$}\,.\\[0.05in]
\!\!\!\!\!\!\!\!\!\!\!\mbox{\bfseries Case 1.2:} &\!\!\!\mbox{$\{q-a\}\leftrightarrow\{a+1\}$ if $0<a=b-1<q-1$}\,.\\[0.05in]
\!\!\!\!\!\!\!\!\!\!\!\mbox{\bfseries Case 2.1:} &\!\!\!\mbox{$\{1,a+2,a+3,\ldots,b\}\leftrightarrow\{a+1,a+2,\ldots,b-1,q\}$ if $0\le a<a+2\le b<q$}\,.\\[0.05in]
\!\!\!\!\!\!\!\!\!\!\!\mbox{\bfseries Case 2.2:} &\!\!\!\mbox{$\{1,a+2,a+3,\ldots,b-1,q\}\leftrightarrow\{a+1,a+2,\ldots,b\}$ if $0< a<a+2\le b\le q$}\,.\\[0.05in]
\!\!\!\!\!\!\!\!\!\!\!\mbox{\bfseries Case 3:} &\!\!\!\mbox{$[q]\leftrightarrow[q]$ if $a=0$ and $b=q$}\,.
\end{array}\]
\caption{The classical Yang-Baxter moves.}
\label{fig:YB}
\end{figure}
 
The non-classical quantum Yang-Baxter moves $J\rightarrow Y_{\overline{u},\overline{w}}(J)$ are listed in Figure \ref{fig:qYB}, by the corresponding starting vertex $\overline{u}$.
The top and bottom rows are the sequences $J$ and $Y_{\overline{u},\overline{w}}(J)$, respectively, while we also have to consider the reverse moves $Y_{\overline{u},\overline{w}}(J)\rightarrow J$; the mentioned sequences correspond to paths in $\QB(\overline{W})$, as explained in Section \ref{ybmoves} (see also the convention related to the edge labels, which was recalled above, in connection to the classical moves). The symbol $*$ is used to indicate the down edges in $\QB(\overline{W})$. 

\subsection{The proof of Theorem {\rm \ref{qbg2}}}\label{secproof}

We start with an alternative description to \eqref{downqbg} of down edges $w\qstep w_\alpha$ in the quantum Bruhat graph, which uses the notion of a {\em quantum root}. This is a root $\alpha \in \Phi^+$ such that $\ell(s_\alpha)=2\htroot(\alpha^\vee)-1$; the definition is motivated by \cite[Lemma 4.3]{bfpmbo}, which states that  any root $\alpha \in \Phi^+$ satisfies $\ell(s_\alpha)\le 2\htroot({\alpha^\vee})-1$. Based on this concept, we can replace the condition in \eqref{downqbg} with the following one:
\begin{equation} \ell(ws_\alpha)=\ell(w)-\ell(s_\alpha)\;\;\mbox{ and }\;\;\mbox{$\alpha$ is a quantum root}\,.
\end{equation}

A characterization of quantum roots is given below.

\begin{lemma}  \label{quantumroot} {\rm \cite{BMO} }
$\alpha\in \Phi^+$ is a quantum root if and only if
\begin{enumerate}
\item[{\rm (1)}] $\alpha$ is a long root, or
\item[{\rm (2)}] $\alpha$ is a short root, and writing $\alpha = \sum_i c_i \alpha_i$, we have
$c_i=0$ for all $i$ such that $\alpha_i$ is long.
\end{enumerate}
Here for simply-laced root systems we consider all roots to be long.
\end{lemma}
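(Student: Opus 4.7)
The plan is to proceed by induction on the height of $\alpha$, after reducing to the full-support case. For the reduction, note that if $I$ is the support of $\alpha$, then $s_\alpha$ lies in the parabolic subgroup $W_I$, and standard parabolic length compatibility gives $\ell_W(s_\alpha) = \ell_{W_I}(s_\alpha)$; the quantity $\htroot(\alpha^\vee)$ is also computed inside $\Phi_I^\vee$. Hence we may assume $\alpha$ has full support in an irreducible root system $\Phi$. Under this reduction, conditions (1) and (2) merge into a single statement: $\alpha$ is a long root of $\Phi$, with the convention that all roots are long in simply-laced types.

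For the inductive step, pick a simple root $\alpha_i$ with $k := \langle\alpha, \alpha_i^\vee\rangle > 0$, and set $\beta := s_i(\alpha) = \alpha - k\alpha_i \in \Phi^+$, which has strictly smaller height than $\alpha$. Two identities drive the argument:
\[
s_\alpha = s_i s_\beta s_i\,, \qquad \htroot(\alpha^\vee) = \htroot(\beta^\vee) + \langle\alpha_i,\alpha^\vee\rangle\,,
\]
the second obtained from $\alpha - \beta = k\alpha_i$ together with $|\alpha| = |\beta|$, by expanding $\alpha^\vee - \beta^\vee$ in terms of $\alpha_i^\vee$. The first identity forces $\ell(s_\alpha) - \ell(s_\beta) \in \{-2, 0, +2\}$, while $\langle\alpha_i,\alpha^\vee\rangle = k\,|\alpha_i|^2/|\alpha|^2$ equals $1$ exactly when $\alpha$ is long, or when both $\alpha$ and $\alpha_i$ are short with $k = 1$; otherwise (precisely when $\alpha$ is short and $\alpha_i$ is long) it is at least $2$.

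For the forward direction, when $\alpha$ satisfies (1) or (2), I would show that a simple root $\alpha_i$ can be chosen so that $\langle\alpha_i,\alpha^\vee\rangle = 1$ and $\beta = s_i(\alpha)$ still satisfies (1) or (2) (both properties are preserved since $\beta$ has the same length as $\alpha$, and the fact that $\langle\alpha,\alpha_i^\vee\rangle>0$ forces $\alpha_i\in\mathrm{supp}(\alpha)$), and that $\ell(s_\alpha) = \ell(s_\beta) + 2$; the inductive equality $\ell(s_\beta) = 2\htroot(\beta^\vee) - 1$ then upgrades to the desired one for $\alpha$. For the converse, when $\alpha$ is short with a long simple root in its support, every candidate $\alpha_i$ with $\langle\alpha,\alpha_i^\vee\rangle>0$ either places us in the unfavorable case $\langle\alpha_i,\alpha^\vee\rangle\geq 2$, so that the growth of $2\htroot(\alpha^\vee) - 1$ exceeds the maximal possible growth of $\ell(s_\alpha)$, or else keeps a long simple root in the support of $\beta$, so that induction gives strict inequality at $\beta$ which propagates to $\alpha$. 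The main obstacle will be rigorously verifying the length identity $\ell(s_\alpha) = \ell(s_\beta) + 2$ in the favorable cases, equivalently that $s_\beta(\alpha_i)>0$; this is best confirmed by reducing to the rank-2 subsystem generated by $\alpha_i$ and $\beta$, where the quantum Bruhat graphs in Figure \ref{fig:qbg} make the verification an inspection of the short list of positive roots and their reflections.
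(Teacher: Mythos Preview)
The paper does not supply its own proof of this lemma; it is quoted from \cite{BMO} and used as a black box in the proof of Theorem~\ref{qbg2}. So there is nothing to compare against, and I will simply assess your argument on its own.

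Your inductive scheme is sound, but you have misidentified the ``main obstacle.'' Once you fix any simple root $\alpha_i$ with $k=\langle\alpha,\alpha_i^\vee\rangle>0$ and set $\beta=s_i(\alpha)$ (with $\htroot(\alpha)\ge 2$, so $\beta\in\Phi^+$), the identity $\ell(s_\alpha)=\ell(s_\beta)+2$ holds \emph{automatically}, with no rank-$2$ inspection needed. Indeed:
\begin{itemize}
\item[(i)] $s_\alpha(\alpha_i)=\alpha_i-\langle\alpha_i,\alpha^\vee\rangle\,\alpha$ is negative, because $\alpha$ has some simple root $\alpha_j\ne\alpha_i$ in its support and the $\alpha_j$-coefficient of $s_\alpha(\alpha_i)$ is $-\langle\alpha_i,\alpha^\vee\rangle\,c_j<0$;
\item[(ii)] since $s_\beta=s_is_\alpha s_i$, one has $s_\beta(\alpha_i)=-s_is_\alpha(\alpha_i)=\alpha_i+\langle\alpha_i,\alpha^\vee\rangle\,\beta>0$.
\end{itemize}
From (ii), $\ell(s_\beta s_i)=\ell(s_\beta)+1$; from (i), $s_is_\beta(\alpha_i)=-s_\alpha(\alpha_i)>0$, so $\ell(s_is_\beta s_i)=\ell(s_\beta s_i)+1$. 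Hence $\ell(s_\alpha)=\ell(s_\beta)+2$ with no further work, and your appeal to Figure~\ref{fig:qbg} is unnecessary.

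With this in hand, both directions go through cleanly. For the forward direction, the induction is immediate. For the converse, if $\alpha$ is short with a long simple root in its support, pick any $\alpha_i$ with $k>0$: if $\alpha_i$ is long then $\langle\alpha_i,\alpha^\vee\rangle\ge 2$, so
\[
2\htroot(\alpha^\vee)-1\;=\;\bigl(2\htroot(\beta^\vee)-1\bigr)+2\langle\alpha_i,\alpha^\vee\rangle\;\ge\;\ell(s_\beta)+4\;>\;\ell(s_\beta)+2\;=\;\ell(s_\alpha)\,;
\]
if $\alpha_i$ is short then the long simple root persists in the support of $\beta$ (its coefficient is unchanged), and the strict inequality at $\beta$ given by induction propagates to $\alpha$ via $\ell(s_\alpha)=\ell(s_\beta)+2$ and $\htroot(\alpha^\vee)=\htroot(\beta^\vee)+1$. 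One small point to add: you should record that the reduction to full support can make the ambient system simply-laced (this is exactly what happens under condition~(2)), so that after reduction the hypothesis really does become ``$\alpha$ is long.''
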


\begin{proof}[Proof of Theorem {\rm \ref{qbg2}}] For $u$ in $W$, let $\Phi(u):=\{\beta>0\::\:u(\beta)<0\}$, so $\ell(u)=|\Phi(u)|$. Similarly, letting  $\overline{\Phi}(\overline{u}):=\Phi(\overline{u})\cap\overline{\Phi}$ for $\overline{u}\in\overline{W}$, the length function $\overline{\ell}(\,\cdot\,)$ of $\overline{W}$ (as a dihedral reflection group) is given by $\overline{\ell}(\overline{u})=|\overline{\Phi}(\overline{u})|$. Using the above notation, we will implicitly use the fact that, if $\alpha\in\overline{\Phi}$, then $\overline{us_\alpha}=\overline{u}s_\alpha$; indeed, we have
\[\lfloor u\rfloor \overline{u}s_\alpha=u s_\alpha=\lfloor us_\alpha\rfloor \overline{us_\alpha}=\lfloor u\rfloor \overline{us_\alpha}\,.\]
We will also implicitly use the well-known fact that, given $\alpha\in\Phi^+$, we have $u<us_\alpha$ if and only if $u(\alpha)>0$.

For the remainder of the proof, fix $w\in W$ and $\alpha\in\overline{\Phi}^+$ such that $w<ws_\alpha$, so $w(\alpha)>0$; this is not necessarily a Bruhat cover in $W$. By Proposition~\ref{br2}, we also have $\overline{w}<\overline{w}s_\alpha$. 

{\em Step $1$.} We start by constructing an injection $\iota\,:\,\Phi(w)\hookrightarrow \Phi(ws_\alpha)$, as follows:
\begin{equation}\iota(\beta):=\label{constrmap}\casetwoex{s_\alpha(\beta)}{s_\alpha(\beta)>0}{\beta}{s_\alpha(\beta)<0}\end{equation}
This is likely a folklore result; we only found a partially related fact in the literature, namely \cite{babccg}[Chapter 1, Exercise 12].

We first check that $\iota(\beta)$ is in $\Phi(ws_\alpha)$. Indeed, in the first case we have $ws_\alpha(s_\alpha(\beta))=w(\beta)<0$, by assumptions. In the second case, we have 
\[ws_\alpha(\beta)=w(\beta)-\langle\beta,\alpha^\vee\rangle\,w(\alpha)\,,\]
where $w(\beta)<0$ and $w(\alpha)>0$, from assumptions; we conclude that $ws_\alpha(\beta)<0$ by noting that $\langle\beta,\alpha^\vee\rangle>0$, because otherwise $s_\alpha(\beta)=\beta-\langle\beta,\alpha^\vee\rangle\alpha$ is a positive root, contrary to the assumptions. Injectivity is checked by assuming that $s_\alpha(\beta)=\gamma$ with $s_\alpha(\gamma)<0$, which implies that $s_\alpha(\gamma)=\beta>0$, a contradiction.

Note that $\alpha\in\Phi(ws_\alpha)\setminus\iota(\Phi(w))$. 

{\em Step $2$.} It is not hard to show that, for any $u\in W$, we have 
\begin{equation}\label{intphib}\overline{\Phi}(\overline{u})=\Phi(u)\cap\overline{\Phi}\,.
\end{equation} 
Indeed, given $\beta\in\overline{\Phi}^+$, we have $\overline{u}(\beta)<0$ if and only if $\lfloor u \rfloor\overline{u}(\beta)<0$, by Proposition~\ref{br2}, cf. also Remark~\ref{rem2}~(2). 

It is also not hard to see that the map $\iota$ restricts to an injection $\overline{\Phi}(\overline{w})\hookrightarrow\overline{\Phi}(\overline{w}s_\alpha)$, as well as to an injection between the complements of the mentioned sets in $\Phi(w)$ and $\Phi(ws_\alpha)$, respectively. Indeed, based on the definition \eqref{constrmap} and \eqref{intphib} for $u=w$ and $u=ws_\alpha$, we only need the obvious fact that $\beta\in\overline{\Phi}$ if and only if $s_\alpha(\beta)\in\overline{\Phi}$.

{\em Step $3$.} Assume that $w \stackrel{\alpha}{\longrightarrow} ws_{\alpha}$ is an edge in $\QB(W)$, i.e., $\ell(ws_\alpha)=\ell(w)+1$. By Step~1, the map $\iota$ is a bijection between $\Phi(w)$ and $\Phi(ws_\alpha)\setminus\{\alpha\}$. By Step~2, it restricts to a bijection between $\overline{\Phi}(\overline{w})$ and $\overline{\Phi}(\overline{w}s_\alpha)\setminus\{\alpha\}$. This implies that $\overline{\ell}(\overline{w}s_\alpha)=\overline{\ell}(\overline{w})+1$, so $\overline{w}\stackrel{\alpha}{\longrightarrow} \overline{w}s_{\alpha}$ is an edge in $\QB(\overline{W})$.

{\em Step $4$.} Now assume that $ws_\alpha \stackrel{\alpha}{\longrightarrow} w$ is an edge in $\QB(W)$, i.e., $\ell(w)=\ell(ws_\alpha)-\ell(s_\alpha)$ and $\alpha$ is a quantum root in $\Phi$. Showing that $\overline{w}s_\alpha \stackrel{\alpha}{\longrightarrow} \overline{w}$ is an edge in $\QB(\overline{W})$ amounts to checking that $\overline{\ell}(\overline{w})=\overline{\ell}(\overline{w}s_\alpha)-\overline{\ell}(s_\alpha)$ and $\alpha$ is a quantum root in $\overline{\Phi}$.

{\em Step $4.1$.} Letting $\Phi_\alpha(w):=\Phi(w)\setminus\Phi(s_\alpha)$, we first note that 
\begin{equation}\label{subsetsa}\Phi(ws_\alpha)\setminus\iota(\Phi_\alpha(w))\subseteq \Phi(s_\alpha)\,,\end{equation}
cf. \cite{babccg}[Chapter 1, Exercise 12]. Indeed, if $\gamma\in\Phi(ws_\alpha)$ is not in $\Phi(s_\alpha)$, then $\beta:=s_\alpha(\gamma)>0$ is in $\Phi_\alpha(w)$ and $\iota(\beta)=\gamma$. It follows that the condition $\ell(w)=\ell(ws_\alpha)-\ell(s_\alpha)$ is equivalent to having equality in \eqref{subsetsa}, that is,
\[\Phi(ws_\alpha)=\iota(\Phi(w))\sqcup\Phi(s_\alpha)\,,\]
cf. \cite{babccg}[Chapter 1, Exercise 13]; here $\sqcup$ denotes disjoint union. Intersecting both sides with $\overline{\Phi}$ and using facts from Step 2, we obtain
\[\overline{\Phi}(\overline{w}s_\alpha)=\iota(\overline{\Phi}(\overline{w}))\sqcup\overline{\Phi}(s_\alpha)\,.\]
As we have seen above, this is equivalent to $\overline{\ell}(\overline{w})=\overline{\ell}(\overline{w}s_\alpha)-\overline{\ell}(s_\alpha)$.

{\em Step $4.2$.} In order to check that the root $\alpha$ is a quantum root in $\overline{\Phi}$ (provided that it is a quantum root in $\Phi\supset\overline{\Phi}$), it suffices to consider $\overline{\Phi}$ of type $B_2$. So $\overline{\Phi}^+=\{\alpha_1,\,\alpha_2,\,\alpha_1+\alpha_2,\,2\alpha_1+\alpha_2\}$, where $\alpha_1$ and $\alpha_2$ are the short and long simple roots, respectively. The only non-quantum (short) root is $\alpha_1+\alpha_2$, so it suffices to check that it cannot be a quantum root in $\Phi$. This follows from the easily checked fact that, in any root system, the expansion (in terms of simple roots) of a long root must contain a long simple root; indeed, we apply this fact to $\alpha_2$ to deduce that the expansion of $\alpha_1+\alpha_2$ (in terms of the simple roots of $\Phi$) must contain a long simple root. 
\end{proof}

\begin{figure}[h!]
{
\footnotesize
\ttfamily
\setlength\parindent{0cm}

\begin{minipage}{.39\textwidth}
\underline{Type $A_2$}\newline

$s_1$\newline
\{1*\},\{1*,3 \}\newline
\{3*\},\{2 ,3*\}

$s_1s_2$\newline
\{1 ,2*\},\{3*\},\{1 ,2*,3 \},\{1 ,3*\}\newline
\{1*,3*\},\{1*\},\{1*,2 ,3*\},\{1*,2 \}

$s_1s_2s_1$\newline
\{2*\},\{1*,3*\},\{1*\},\{2*,3 \},\{3*\}\newline
\{2*\},\{2*,3 \},\{3*\},\{1*,3*\},\{1*\} 

$s_2$\newline
\{3*\},\{2 ,3*\}\newline
\{1*\},\{1*,3 \}

$s_2s_1$\newline
\{1*,3*\},\{1*,2 ,3*\},\{1*,2 \},\{1*\}\newline
\{1 ,2*\},\{1 ,2*,3 \},\{1 ,3*\},\{3*\}
\end{minipage}%
\begin{minipage}{.65\textwidth}
\underline{Type $C_2$}\newline

$s_1$\newline
\{1*\},\{1*,4 \}\newline
\{4*\},\{3 ,4*\}

$s_1s_2$\newline
\{1 ,2*\},\{4*\},\{1 ,2*,4 \},\{2 ,4*\}\newline
\{1*,4*\},\{1*\},\{1*,3 ,4*\},\{1*,3 \}

$s_1s_2s_1$\newline
\{2*\},\{1*,4*\},\{1*\},\{2*,4 \},\{1*,2 ,4*\},\{1*,2 \}\newline
\{3*\},\{3*,4 \},\{4*\},\{1 ,3*\},\{1 ,3*,4 \},\{1 ,4*\}

$s_2$\newline
\{4*\},\{3 ,4*\}\newline
\{1*\},\{1*,4 \}

$s_2s_1$\newline
\{1*,4*\},\{1*,3 ,4*\},\{1*,3 \},\{1*\}\newline
\{2 ,3*\},\{2 ,3*,4 \},\{2 ,4*\},\{4*\}

$s_2s_1s_2$\newline
\{1 ,2*,4*\},\{1 ,2*,3 ,4*\},\{1 ,2*,3 \},\{1 ,4*\},\{1 ,2*\},\{4*\}\newline
\{1*,2 ,3*\},\{1*,2 ,3*,4 \},\{1*,2 ,4*\},\{1*,2 \},\{1*,4*\},\{1*\}

$s_2s_1s_2s_1$\newline
\{2*,4*\},\{2*,3 ,4*\},\{2*,3 \},\{4*\},\{2*\},\{1*,4*\},\{1*\}\newline
\{1*,3*\},\{1*,3*,4 \},\{1*,4*\},\{1*\},\{3*\},\{3*,4 \},\{4*\}
\end{minipage}

\begin{minipage}{\textwidth}
\underline{Type $G_2$}\newline

$s_1$\newline
\{1*\},\{1*,6 \}\newline
\{6*\},\{5 ,6*\}

$s_1s_2$\newline
\{1 ,2*\},\{6*\},\{1 ,2*,6 \},\{4 ,6*\}\newline
\{1*,6*\},\{1*\},\{1*,5 ,6*\},\{1*,5 \}

$s_1s_2s_1$\newline
\{2*\},\{1*,6*\},\{1*\},\{2*,6 \},\{1*,4 ,6*\},\{1*,4 \}\newline
\{5*\},\{5*,6 \},\{6*\},\{3 ,5*\},\{3 ,5*,6 \},\{3 ,6*\}

$s_1s_2s_1s_2$\newline
\{2 ,4*\},\{1 ,2*,6*\},\{1 ,2*\},\{6*\},\{2 ,4*,6 \},\{1 ,2*,4 ,6*\},\{1 ,2*,4 \},\{2 ,6*\}\newline
\{1*,5*\},\{1*,5*,6 \},\{1*,6*\},\{1*\},\{1*,3 ,5*\},\{1*,3 ,5*,6 \},\{1*,3 ,6*\},\{1*,3 \}

$s_1s_2s_1s_2s_1$\newline
\{1*,2 ,4*\},\{2*,6*\},\{2*\},\{1*,6*\},\{1*\},\{1*,2 ,4*,6 \},\{2*,4 ,6*\},\{2*,4 \},\{1*,2 ,6*\},\{1*,2 \}\newline
\{1 ,3*,6*\},\{1 ,3*\},\{5*\},\{5*,6 \},\{6*\},\{1 ,3*,5 ,6*\},\{1 ,3*,5 \},\{1 ,5*\},\{1 ,5*,6 \},\{1 ,6*\}

$s_2$\newline
\{6*\},\{5 ,6*\} \newline
\{1*\},\{1*,6 \}

$s_2s_1$\newline
\{1*,6*\},\{1*,5 ,6*\},\{1*,5 \},\{1*\}\newline
\{4 ,5*\},\{4 ,5*,6 \},\{4 ,6*\},\{6*\}

$s_2s_1s_2$\newline
\{1 ,2*,6*\},\{1 ,2*,5 ,6*\},\{1 ,2*,5 \},\{3 ,6*\},\{1 ,2*\},\{6*\}\newline
\{1*,4 ,5*\},\{1*,4 ,5*,6 \},\{1*,4 ,6*\},\{1*,4 \},\{1*,6*\},\{1*\}

$s_2s_1s_2s_1$\newline
\{2*,6*\},\{2*,5 ,6*\},\{2*,5 \},\{1*,3 ,6*\},\{1*,3 \},\{2*\},\{1*,6*\},\{1*\} \newline
\{1 ,3*\},\{1 ,3*,6 \},\{2 ,5*\},\{2 ,5*,6 \},\{2 ,6*\},\{5*\},\{5*,6 \},\{6*\}

$s_2s_1s_2s_1s_2$\newline
\{4*\},\{1 ,4*\},\{1 ,4*,6 \},\{1 ,2*,3 ,6*\},\{1 ,2*,3 \},\{1 ,6*\},\{4*,6 \},\{1 ,2*,6*\},\{1 ,2*\},\{6*\}\newline
\{3*\},\{3*,6 \},\{1*,2 ,5*\},\{1*,2 ,5*,6 \},\{1*,2 ,6*\},\{1*,2 \},\{1*,5*\},\{1*,5*,6 \},\{1*,6*\},\{1*\}

$s_2s_1s_2s_1s_2s_1$\newline
\{1*,4*\},\{4*\},\{4*,6 \},\{2*,3 ,6*\},\{2*,3 \},\{6*\},\{1*,4*,6 \},\{2*,6*\},\{2*\},\{1*,6*\},\{1*\}\newline
\{3*,6*\},\{3*\},\{1*,5*\},\{1*,5*,6 \},\{1*,6*\},\{1*\},\{3*,5 ,6*\},\{3*,5 \},\{5*\},\{5*,6 \},\{6*\}

\end{minipage}
}
\caption{The non-classical Yang-Baxter moves.}
\label{fig:qYB}
\end{figure}

\bibliographystyle{alpha}

\begin{thebibliography}{LNS{\etalchar{+}}13b}

\bibitem[BB05]{babccg}
A. Bj\"{o}rner and F. Brenti.
\newblock \textit{Combinatorics of {C}oxeter groups.}
\newblock Graduate Texts in Mathematics Vol.~231. New York: Springer, 2005.

\bibitem[BMO11]{BMO}
A. Braverman, D.~Maulik, and A.~Okounkov.
\newblock Quantum cohomology of the Springer resolution.''
\newblock {\em Adv. Math.} 227421--458, 2011.

\bibitem[BFP99]{bfpmbo}
F.~Brenti, S.~Fomin, and A.~Postnikov.
\newblock Mixed {B}ruhat operators and {Y}ang-{B}axter equations for {W}eyl
  groups.
\newblock {\em Int. Math. Res. Not.}, 8:419--441, 1999.

\bibitem[BL]{BL}
C. Briggs and C.~Lenart.
\newblock A charge statistic in type $B$.
\newblock in preparation.

\bibitem[D93]{dyehas}
M. Dyer.
\newblock Hecke algebras and shellings of {B}ruhat intervals.
\newblock \textit{Compositio Math.}, 89:91--115, 1993.

\bibitem[FL06]{faltps}
G.~Fourier and P.~Littelmann.
\newblock Tensor product structure of affine {D}emazure modules and limit
  constructions.
\newblock {\em Nagoya Math. J.}, 182:171--198, 2006.

\bibitem[FOS09]{foskr}
G.~Fourier, M.~Okado, and A.~Schilling.
\newblock Kirillov-{R}eshetikhin crystals for nonexceptional types.
\newblock {\em Adv. Math.}, 222:1080--1116, 2009.

\bibitem[FSS07]{fssdsi}
G.~Fourier, A.~Schilling, and M.~Shimozono.
\newblock Demazure structure inside {K}irillov-{R}eshetikhin crystals.
\newblock {\em J. Algebra}, 309:386--404, 2007.

\bibitem[Ful97]{fulyt}
W.~Fulton.
\newblock {\em {Y}oung {T}ableaux}.
\newblock Cambridge University Press, 1997.

\bibitem[FW04]{fawqps}
W.~Fulton and C.~Woodward.
\newblock On the quantum product of {S}chubert classes.
\newblock {\em J. Algebraic Geom.}, 13:641--661, 2004.

\bibitem[GL05]{gallsg}
S.~Gaussent and P.~Littelmann.
\newblock {LS}-galleries, the path model and {MV}-cycles.
\newblock {\em Duke Math. J.}, 127:35--88, 2005.

\bibitem[HKO{\etalchar{+}}99]{hkorff}
G.~Hatayama, A.~Kuniba, M.~Okado, T.~Takagi, and Y.~Yamada.
\newblock Remarks on fermionic formula.
\newblock In {\em Recent developments in quantum affine algebras and related
  topics ({R}aleigh, {NC}, 1998)}, volume 248 of {\em Contemp. Math.}, pages
  243--291. Amer. Math. Soc., Providence, RI, 1999.

\bibitem[HK00]{hkqgcb}
J.~Hong and S.J. Kang.
\newblock {\em Introduction to {Q}uantum {G}roups and {C}rystal {B}ases}, volume~42 of
  {\em Graduate Studies in Mathematics}.
\newblock Amer. Math. Soc., 2000.


\bibitem[Kas91]{kascbq}
M.~Kashiwara.
\newblock On crystal bases of the {$q$}-analogue of universal enveloping
  algebras.
\newblock {\em Duke Math. J.}, 63:465--516, 1991.

\bibitem[KN94]{kancgr}
\newblock M.~Kashiwara and T.~Nakashima.
\newblock Crystal graphs for representations of the {$q$}-analogue of classical
  {L}ie algebras.
\newblock {\em J. Algebra}, 165:295--345, 1994.

\bibitem[KR90]{karrym}
A.~Kirillov and N.~Reshetikhin.
\newblock Representations of {Y}angians and multiplicities of the inclusion of
  the irreducible components of the tensor product of representations of simple
  {L}ie algebras.
\newblock {\em J. Sov. Math.}, 52:3156--3164, 1990.

\bibitem[LS79]{lassuc}
A.~Lascoux and M.-P. Sch\mbox{\"{u}}tzenberger.
\newblock Sur une conjecture de {H}. {O}. {F}oulkes.
\newblock {\em C. R. Acad. Sci. Paris \mbox{S\'e}r. I Math.}, 288:95--98, 1979.

\bibitem[Lec02]{lecsc}
C.~Lecouvey.
\newblock Schensted-type correspondence, plactic monoid, and jeu de taquin for
  type {$C\sb n$}.
\newblock {\em J. Algebra}, 247:295--331, 2002.

\bibitem[Lec03]{lecst}
C.~Lecouvey.
\newblock Schensted-type correspondences and plactic monoids for types {$B\sb n$} and {$D\sb n$}.
\newblock {\em J. Algebraic Combin.}, 18:99--133, 2003.

\bibitem[Len07]{lenccg}
C.~Lenart.
\newblock On the combinatorics of crystal graphs, {I}. {L}usztig's involution.
\newblock {\em Adv. Math.}, 211:324--340, 2007.

\bibitem[Len12]{Lenart}
C.~Lenart.
\newblock From {M}acdonald polynomials to a charge statistic beyond type {$A$}.
\newblock {\em J. Combin. Theory Ser. A}, 119:683--712, 2012.

\bibitem[LL11]{lalgam}
C.~Lenart and A.~Lubovsky.
\newblock {A generalization of the alcove model and its applications}.
\newblock {\em J. Algebraic Combin.}, 2014, {\tt DOI:10.1007/s10801-014-0552-3}.

\bibitem[LNS{\etalchar{+}}12]{unialcmod}
C.~Lenart, S.~Naito, D.~Sagaki, A.~Schilling, and M.~Shimozono.
\newblock A uniform model for {K}irillov-{R}eshetikhin crystals {I}: {L}ifting
  the parabolic quantum {B}ruhat graph.
	\newblock {\em Int. Math. Res. Not.}, 2014, {\tt DOI 10.1093/imrn/rnt263}.

\bibitem[LNS{\etalchar{+}}13a]{lnseda}
C.~Lenart, S.~Naito, D.~Sagaki, A.~Schilling, and M.~Shimozono.
\newblock Explicit description of the action of root operators on quantum
  {L}akshmibai-{S}eshadri paths.
\newblock {\tt arXiv:1308.3529}, 2013.
\newblock To appear in {\em Proceedings of the 5th Mathematical Society of
  Japan Seasonal Institute. Schubert Calculus}, Osaka, Japan, 2012.


\bibitem[LNS{\etalchar{+}}13b]{unialcmod2}
C.~Lenart, S.~Naito, D.~Sagaki, A.~Schilling, and M.~Shimozono.
\newblock A uniform model for {K}irillov-{R}eshetikhin crystals {II}: {P}ath
  models and {$P=X$}.
	\newblock {\ttfamily arXiv:1402.2203}.


\bibitem[LP07]{lapawg}
C.~Lenart and A.~Postnikov.
\newblock Affine {W}eyl groups in {$K$}-theory and representation theory.
\newblock {\em Int. Math. Res. Not.}, pages 1--65, 2007.
\newblock Art. ID rnm038.

\bibitem[LP08]{lapcmc}
C.~Lenart and A.~Postnikov.
\newblock A combinatorial model for crystals of {K}ac-{M}oody algebras.
\newblock {\em Trans. Amer. Math. Soc.}, 360:4349--4381, 2008.

\bibitem[LS13]{lascec}
C.~Lenart and A.~Schilling.
\newblock Crystal energy via the charge in types {$A$} and {$C$}.
\newblock {\em Math. Z.}, 273:401--426, 2013.


\bibitem[Lit94]{litlrr}
P.~Littelmann.
\newblock {A Littlewood-Richardson rule for symmetrizable Kac-Moody algebras}.
\newblock {\em Invent. Math.}, 116:329--346, 1994.

\bibitem[Lit95]{litpro}
P.~Littelmann.
\newblock Paths and root operators in representation theory.
\newblock {\em Ann. of Math. {\rm (2)}}, 142:499--525, 1995.

\bibitem[NS08]{NS08}
S. Naito and D. Sagaki. 
\newblock Lakshmibai-Seshadri paths of level-zero weight shape and 
one-dimensional sums associated to level-zero fundamental representations.
\newblock {\it Compos. Math.}, 144:1525--1556, 2008.


\bibitem[Pos05]{posqbg}
A.~Postnikov.
\newblock {Q}uantum {B}ruhat graph and {S}chubert polynomials.
\newblock {\em Proc. Amer. Math. Soc.}, 133:699--799, 2005.

\bibitem[ST12]{satdck}
A.~Schilling and P.~Tingley.
\newblock Demazure crystals, {K}irillov-{R}eshetikhin crystals, and the energy
  function.
\newblock {\em Electron. J. Combin.}, 19:P2, 2012.

\bibitem[Shi93]{shi}
J.-Y. Shi.
\newblock Some numeric results on root systems.
\newblock {\em Pacific J. Math.}, 160:155--164, 1993.

\bibitem[Yam98]{yampfg}
Y.~Yamane.
\newblock Perfect crystals of $U_q(G_2^{(1)})$.
\newblock {\em J. Algebra}, 210:440--486, 1998.


\end{thebibliography}

\newcommand{\etalchar}[1]{$^{#1}$}

\end{document}